\newtheorem{lem}{Lemma}[section]
\newtheorem{thm}[lem]{Theorem}
\newtheorem{pro}[lem]{Proposition}
\newtheorem{cor}[lem]{Corollary}
\newtheorem{exa}[lem]{Example}
\newtheorem{defi}[lem]{Definition}
\newtheorem{rem}[lem]{Remarks}
\newcommand{\des}{{\mathsf{des}}}
\newcommand{\exc}{{\mathsf{exc}}}
\newcommand{\fwex}{{\mathsf{fwex}}}
\newcommand{\cro}{{\mathsf{cro}}}
\newcommand{\nega}{{\mathsf{neg}}}
\newcommand{\wex}{{\mathsf{wex}}}
\newcommand{\fexc}{{\mathsf{fexc}}}
\newcommand{\cs}{\mathsf{cs}}
\newcommand{\pat}{\mathsf{pat}}
\newcommand{\acb}{\mathsf{13\textnormal{-}2}}
\newcommand{\bca}{\mathsf{2\textnormal{-}31}}
\renewcommand{\SS}{{\mathcal{S}}}
\newcommand{\F}{\mathcal{F}}
\newcommand{\G}{\mathcal{G}}
\renewcommand{\H}{\mathcal{H}}
\newcommand{\M}{\mathcal{M}}
\newcommand{\T}{\mathcal{T}}
\newcommand{\U}{\mathsf{U}}
\renewcommand{\L}{\mathsf{L}}
\newcommand{\W}{\mathsf{W}}
\newcommand{\D}{\mathsf{D}}
\title[Signed Countings of types B and D permutations]{Signed Countings of types B and D permutations \\and $t,q$-Euler Numbers}
\author{Sen-Peng Eu}
\address{Department of Mathematics, National Taiwan Normal University, Taipei 11677, and Chinese Air Force Academy, Kaohsiung 82047, Taiwan, ROC}
\email{speu@math.ntnu.edu.tw}
\author{Tung-Shan Fu}
\address{Department of Applied Mathematics, National Pingtung University, Pingtung 90003, Taiwan, ROC}
\email{tsfu@mail.nptu.edu.tw}
\author{Hsiang-Chun Hsu}
\address{Department of Mathematics, National Taiwan Normal University, Taipei 11677, Taiwan, ROC}
\email{hchsu0222@gmail.com}
\author{Hsin-Chieh Liao}
\address{Department of Mathematics, National Taiwan Normal University, Taipei 11677, Taiwan, ROC}
\email{jeffliao1@gmail.com}
\subjclass[2010]{Primary 05A05, 05A15; Secondary 05A19}
\keywords{Euler number, Springer number, signed permutations, derangements, continued fractions, weighted bicolored Motzkin paths}
\begin{document}

\begin{abstract} It is a classical result that the parity-balance of the number of weak excedances of all permutations (derangements, respectively) of length $n$ is the Euler number $E_n$, alternating in sign, if $n$ is odd (even, respectively). Josuat-Verg\`{e}s obtained a $q$-analog of the results respecting the number of crossings of a permutation. One of the goals in this paper is to extend the results to the permutations (derangements, respectively) of types B and D, on the basis of the joint distribution in statistics excedances, crossings and the number of negative entries obtained by Corteel, Josuat-Verg\`{e}s and Kim.

Springer numbers are analogous Euler numbers that count the alternating
permutations of type B, called snakes. Josuat-Verg\`{e}s derived bivariate polynomials $Q_n(t,q)$ and $R_n(t,q)$ as generalized Euler numbers via
successive $q$-derivatives and multiplications by $t$ on polynomials in $t$. The other goal in this paper is to give a combinatorial interpretation of  $Q_n(t,q)$ and $R_n(t,q)$ as the enumerators of the snakes with restrictions.
\end{abstract}

\maketitle

\section{Introduction}
Let $\mathfrak{S}_n$ be the set of
permutations of $[n]:=\{1,2,\ldots,n\}$.
The classical Euler numbers $E_n$, defined by
\[
\sum_{n\ge 0}E_n\frac{x^n}{n!}=\tan x+\sec x,
\]
count the number of \emph{alternating permutations} in $\mathfrak{S}_n$, i.e., $\sigma\in\mathfrak{S}_n$ such that
$\sigma_1>\sigma_2<\sigma_3>\ldots \sigma_n$. The numbers $E_{2n}$ are called the \emph{secant numbers} and the numbers $E_{2n+1}$ are called the \emph{tangent numbers}.

With the sign of a permutation depending on the parity of the number of weak excedances, it is known \cite{Foata-Schu, Roselle1968} that the signed counting of all permutations (derangements, respectively) of $[n]$ is the Euler number $E_n$ if $n$ is odd (even, respectively). There are also $q$-analogs of the results, refined by various statistics \cite{FH,JVerges2010,Shin2010}. One of the goals in this paper is to give their types B and D extensions. Surprisingly, the signs are controlled by one half of the number of flag weak excedances (Theorems \ref{thm:signFwexB} and \ref{thm:signFwexD}).

As alternating permutations form the largest descent class in the symmetric group $\mathfrak{S}_n$, Springer \cite{Springer} determined the cardinality $K(W)$ of the largest descent class of any finite Coxeter group $W$ in the classification. Arnol'd \cite{Arnold} introduced families of restricted signed permutations, called \emph{snakes}, with quantities $K(W)$ for each group $W$. In particular, the number $S_n=K(B_n)$ associated with the group $B_n$ of signed permutations is called the $n$th \emph{Springer number}. Springer \cite{Springer} obtained the exponential generating function for $\{S_n\}_{n\ge 0}$,
\[
\sum_{n\ge 0} S_n\frac{x^n}{n!}=\frac{1}{\cos x-\sin x}.
\]
Josuat-Verg\`{e}s derived bivariate polynomials $Q_n(t,q)$ and $R_n(t,q)$ as generalized Euler numbers via
successive $q$-derivatives and multiplications by $t$ on polynomials in $t$. The other goal in this paper is to give a combinatorial interpretation of  $Q_n(t,q)$ and $R_n(t,q)$ as the enumerators of variants of the snakes.

\subsection{Backgrounds}
For a permutation
$\sigma=\sigma_1\sigma_2\cdots\sigma_n\in\mathfrak{S}_n$, an \emph{excedance} (\emph{weak excedance}, respectively) is an integer $i\in [n]$ such that $\sigma_i>i$ ($\sigma_i\ge i$, respectively). Let $\exc(\sigma)$ and $\wex(\sigma)$ denote the number of excedances and the number of weak excedances of $\sigma$, respectively.
It is a classical result of Euler that
\begin{equation}\label{Eulercan1}
    \sum_{\sigma\in\mathfrak{S}_n}(-1)^{\exc(\sigma)}=
    \left\{\begin{array}{ll}
        0  & \mbox{if $n$ is even,}\\
    (-1)^{\frac{n-1}{2}}E_n & \mbox{if $n$ is odd.}
    \end{array}
    \right.
\end{equation}
This is the evaluation at $x=-1$ of the $\gamma$-expansion of the Eulerian polynomials, obtained by Foata and Sch\"utzenberger~\cite{Foata-Schu},
\begin{equation}
 \sum_{\sigma\in\mathfrak{S}_n} x^{\exc(\sigma)}=\sum_{i=0}^{\lfloor (n-1)/2\rfloor} \gamma_{n,i}x^{i}(1+x)^{n-1-2i},
\end{equation}
where $\gamma_{n,i}$ is the number of $\sigma\in\mathfrak{S}_n$ with $i$ descents and no double descents.

Let $\mathfrak{S}_n^{*}$ be the set of \emph{derangements} in $\mathfrak{S}_n$ (i.e., consisting of $\sigma\in\mathfrak{S}_n$ without fixed points). Another result is that
\begin{equation}\label{Eulercan2}
    \sum_{\sigma\in\mathfrak{S}_n^{*}}(-1)^{\exc(\sigma)}=
    \left\{
            \begin{array}{ll}
        (-1)^{\frac{n}{2}}E_n  & \mbox{if $n$ is even,}\\
        0 & \mbox{if $n$ is odd.}
            \end{array}
    \right.
\end{equation}
This is the evaluation at $x=-1$ of the $\gamma$-expansion of the derangement polynomials
\begin{equation}
\sum_{\sigma\in\mathfrak{S}_n^{*}}x^{\exc(\sigma)}=\sum_{i=0}^{\lfloor n/2\rfloor} \xi_{n,i}x^{i}(1+x)^{n-2i},
\end{equation}
where $\xi_{n,i}$ is the number of $\sigma\in\mathfrak{S}_n$ with  $i$ decreasing runs and none of size one. This result along with $q$-analogs were obtained independently by several authors; see e.g. \cite{AS, LZ, SW, Shin2012}. Roselle~\cite{Roselle1968} obtained an analogous result of Eq.\,(\ref{Eulercan2}), using a different statistic with equidistribution, with the exponential generating function $\frac{2e^x}{e^{2x}+1}$ for the right-hand side.

There are some $q$-analogs of the Euler numbers $E_n$. One of the $q$-analogs is introduced by Han, Randrianarivony and Zeng \cite{HRZ}, defined in the form of continued fraction expansion.  We use the notation $\frac{\alpha_1}{\beta_1}{{}\atop{-}}\frac{\alpha_2}{\beta_2}{{}\atop{-}}\cdots=\alpha_1/(\beta_1-\alpha_2/(\beta_2-\dots))$ for continued fractions.
The $q$-\emph{secant numbers} $E_{2n}(q)$ and the $q$-\emph{tangent numbers} $E_{2n+1}(q)$ are defined by
\begin{align} \label{eqn:q-secant}
\sum_{n\ge 0} E_{2n}(q)x^n &=\frac{1}{1}{{}\atop{-}}\frac{[1]_q^2x}{1}{{}\atop{-}}\frac{[2]_q^2x}{1}{{}\atop{-}}\frac{[3]_q^2x}{1}{{}\atop{-}}\cdots, \\ \label{eqn:q-tangent}
\sum_{n\ge 0} E_{2n+1}(q)x^n &=\frac{1}{1}{{}\atop{-}}\frac{[1]_q[2]_qx}{1}{{}\atop{-}}\frac{[2]_q[3]_qx}{1}{{}\atop{-}}\frac{[3]_q[4]_qx}{1}{{}\atop{-}}\cdots,
\end{align}
where $[j]_q:=1+q+\dots +q^{j-1}$ for all positive integers $j$.

A \emph{crossing} of $\sigma\in\mathfrak{S}_n$ is a pair $(i,j)$ with $i,j\ge 1$ such that $i<j\le\sigma_i<\sigma_j$ or $\sigma_i<\sigma_j<i<j$, denoted by $\cro(\sigma)$ the number of crossings of $\sigma$. The notions of crossing and alignment of a permutation were introduced by Williams \cite{Williams} in connection to totally positive Grassmann cells. Corteel \cite{Corteel-cross} gave an enumerative relation between crossing and alignment. Refined by the statistic $\cro(\sigma)$, Josuat-Verg\`{e}s~\cite{JVerges2010} derived $q$-analogs of the results in Eqs.\,(\ref{Eulercan1}) and (\ref{Eulercan2}),
\begin{align}
 \sum_{\sigma\in\mathfrak{S}_n}(-1)^{\wex(\sigma)}q^{\cro(\sigma)}
    &=\left\{\begin{array}{ll}
            0  & \mbox{if $n$ is even,}\\
            (-1)^{\frac{n+1}{2}}E_n(q) & \mbox{if $n$ is odd;}
           \end{array}\right.   \label{JV1} \\
 \sum_{\sigma\in\mathfrak{S}_n^{*}} \left(-\frac{1}{q}\right)^{\wex(\sigma)}q^{\cro(\sigma)}
    &=\left\{\begin{array}{ll}
                        \big(-\frac{1}{q}\big)^{\frac{n}{2}}E_n(q)  & \mbox{if $n$ is even,}\\
                        0 & \mbox{if $n$ is odd.}
                    \end{array}\right.  \label{JV2}
\end{align}
There are two other distinct $q$-analogs of Eqs.\,(\ref{Eulercan1}) and (\ref{Eulercan2}) that have been discovered by Foata and Han \cite{FH} and by Shin and Zeng \cite{Shin2010}, respectively. One of the purposes of this paper is to extend the results in Eqs.\,(\ref{JV1}) and (\ref{JV2}) on signed permutations and even-signed permutations.

\subsection{Main results}
Josuat-Verg\`{e}s introduced bivariate polynomials $Q_n(t,q)$ and $R_n(t,q)$ for generalizations of the $q$-secant numbers and $q$-tangent numbers, respectively \cite{JV}. Let $D$ be the $q$-analog of the differential operator acting on polynomials $f(t)$ by
\begin{equation}
(D f)(t) := \frac{f(qt)-f(t)}{(q-1)t}.
\end{equation}
Let $U$ be the operator acting on $f(t)$ by multiplication by $t$. Notice that the $q$-derivative $D(t^n)=[n]_q t^{n-1}$ and the communication relation $DU-qUD=1$ hold. The polynomials $Q_n(t,q)$ and $R_n(t,q)$ are defined algebraically by
\begin{equation}
Q_n(t,q):=(D+UDU)^n 1, \qquad R_n(t,q):= (D+DUU)^n 1.
\end{equation}
Several of the initial polynomials are listed below:
\begin{align*}
Q_0(t,q) &= 1 \\
Q_1(t,q) &= t \\
Q_2(t,q) &= 1+(1+q)t^2 \\
Q_3(t,q) &= (2+2q+q^2)t+(1+2q+2q^2+q^3)t^3, \\
& \\
R_0(t,q) &= 1 \\
R_1(t,q) &= (1+q)t \\
R_2(t,q) &= (1+q)+(1+2q+2q^2+q^3)t^2 \\
R_3(t,q) &= (2+5q+5q^2+3q^3+q^4)t+(1+3q+5q^2+6q^3+5q^4+3q^5+q^6)t^3.
\end{align*}
Josuat-Verg\`{e}s proved $Q_n(1,1)=S_n$ and $R_n(1,1)=2^nE_{n+1}$, and obtained the generating functions for the sequences $\{Q_n(t,q)\}_{n\ge 0}$ and $\{R_n(t,q)\}_{n\ge 0}$ in the form of J-fractions (or Jacobi continued fractions) \cite{JV}.

\begin{defi} \label{def:J-fraction} For any two sequences $\{\mu_h\}_{h\ge 0}$ and $\{\lambda_h\}_{h\ge 1}$, let $\mathfrak{F}(\mu_h,\lambda_h)$ denote the continued fraction
\begin{equation}
\mathfrak{F}(\mu_h,\lambda_h)=\frac{1}{1-\mu_0 x}{{}\atop{-}}\frac{\lambda_1 x^2}{1-\mu_1 x}{{}\atop{-}}\frac{\lambda_2 x^2}{1-\mu_2 x}{{}\atop{-}}\frac{\lambda_3 x^2}{1-\mu_3x}{{}\atop{-}}\cdots
\end{equation}
\end{defi}

\begin{thm} {\rm\bf (Josuat-Verg\`es)} \label{thm:QR-continued-fraction} We have
\begin{equation}
\sum_{n\ge 0} Q_n(t,q) x^n = \mathfrak{F}(\mu^{Q}_h,\lambda^{Q}_h), \qquad
\sum_{n\ge 0} R_n(t,q) x^n = \mathfrak{F}(\mu^{R}_h,\lambda^{R}_h),
\end{equation}
where
\[
\left\{\begin{array}{rl}
\mu^{Q}_h &= tq^h([h]_q+[h+1]_q)\\ [0.8ex]
\lambda^{Q}_h &= (1+t^2q^{2h-1})[h]^2_q,
       \end{array}
\right.
\qquad
\left\{\begin{array}{rl}
\mu^{R}_h &= tq^h(1+q)[h+1]_q\\ [0.8ex]
\lambda^{R}_h &= (1+t^2q^{2h})[h]_q[h+1]_q.
       \end{array}
\right.
\]
\end{thm}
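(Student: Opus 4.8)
The plan is to realize $Q_n(t,q)=(D+UDU)^n1$ as the moment sequence of a tridiagonal operator and then read off the Jacobi fraction through the standard correspondence between operator words and weighted lattice paths. First I would record the action on the natural basis $\{t^k\}_{k\ge0}$. From $D(t^k)=[k]_q t^{k-1}$, $U(t^k)=t^{k+1}$, and the commutation $DU-qUD=1$ one computes
\begin{equation}
(D+UDU)\,t^k=[k]_q\,t^{k-1}+[k+1]_q\,t^{k+1},\qquad (D+DUU)\,t^k=[k]_q\,t^{k-1}+[k+2]_q\,t^{k+1}.
\end{equation}
Thus each application of $A:=D+UDU$ (resp. $B:=D+DUU$) is a single $\pm1$ move in the $t$-degree with no diagonal part: a down-move from degree $h$ has weight $[h]_q$ and an up-move has weight $[h+1]_q$ (resp. $[h+2]_q$). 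Writing $\langle k|$ for extraction of the coefficient of $t^k$ and $|0\rangle$ for the constant $1$, this gives the resolvent identity
\begin{equation}
\sum_{n\ge0}Q_n(t,q)\,x^n=\sum_{k\ge0}t^k\,\langle k|(1-xA)^{-1}|0\rangle,
\end{equation}
and likewise for $R_n$ with $A$ replaced by $B$.

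Next I would pass to the combinatorial model. The right-hand side is the generating polynomial of weighted $\pm1$ lattice paths of length $n$ that start at height $0$, in which every down-step from height $h$ carries the weight $[h]_q$, every up-step the weight $[h+1]_q$ (resp. $[h+2]_q$), and the \emph{final} height is recorded by a power of $t$. The goal is to show that this endpoint-marked path generating function collapses to $\mathfrak{F}(\mu^{Q}_h,\lambda^{Q}_h)$ (resp. $\mathfrak{F}(\mu^{R}_h,\lambda^{R}_h)$). Concretely I would introduce the height-$h$ version $H_h$ of the series and derive the three-term recursion
\begin{equation}
H_h=t^h+x\,[h+1]_q\,H_{h+1}+x\,[h]_q\,H_{h-1},\qquad H_{-1}=0,
\end{equation}
so that $H_0=\sum_n Q_n(t,q)\,x^n$, and then fold the free endpoint into a continued fraction by a last-passage decomposition of each path: the excursions between consecutive last visits to heights $0,1,2,\dots$ supply the even part of the fraction, while the unmatched rises carry the factors of $t$. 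Contracting each matched pair ``up from $h-1$ / down from $h$'' into one Jacobi level produces the product $[h]_q^2$ (resp. $[h]_q[h+1]_q$) appearing in $\lambda_h$, and re-expanding the $q$-integers after this contraction is what generates the explicit powers $q^h$ in $\mu_h$ and $q^{2h-1}$ (resp. $q^{2h}$) inside $\lambda_h$. I would pin down the bookkeeping against the initial values; the expansions of $\mathfrak{F}$ through $x^3$ already reproduce $Q_0,\dots,Q_3$ and $R_0,\dots,R_3$.

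The main obstacle is precisely this endpoint marking. Unlike the classical vacuum-to-vacuum situation behind the $q$-secant and $q$-tangent fractions of Eqs.\,(\ref{eqn:q-secant})--(\ref{eqn:q-tangent}) — which is exactly the $t$-free shadow of the present statement, since $\mu^{Q}_h|_{t=0}=0$ and $\lambda^{Q}_h|_{t=0}=[h]_q^2$ recover $E_{2n}(q)$, while $\mu^{R}_h|_{t=0}=0$ and $\lambda^{R}_h|_{t=0}=[h]_q[h+1]_q$ recover $E_{2n+1}(q)$ — the relevant linear functional here reads off the \emph{entire} polynomial. Hence $H_0$ is a resolvent with a source term rather than a bare return amplitude, and a priori it is only a combination of continued-fraction convergents. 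Showing that the source terms $t^h$ conspire to telescope this combination into a single Jacobi fraction, and that the contraction delivers exactly the stated $q$-powers, is the delicate step; once it is in place, the identical computation with up-weight $[k+2]_q$ in the place of $[k+1]_q$ yields the $R$-statement.
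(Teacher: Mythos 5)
Your setup is correct as far as it goes: the actions $(D+UDU)t^k=[k]_qt^{k-1}+[k+1]_qt^{k+1}$ and $(D+DUU)t^k=[k]_qt^{k-1}+[k+2]_qt^{k+1}$, the endpoint-marked path model, the recursion $H_h=t^h+x[h+1]_qH_{h+1}+x[h]_qH_{h-1}$ with $H_{-1}=0$, and the observations that $t=0$ recovers the $q$-secant/$q$-tangent fractions and that the expansion through $x^3$ matches $Q_0,\dots,Q_3$ and $R_0,\dots,R_3$ all check out. (Note that the paper itself gives no proof of this statement; it is imported from Josuat-Verg\`es \cite{JV}, so there is no in-text argument to compare against.)

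The difficulty is that the single step you defer is the entire theorem. Everything up to the recursion for $H_h$ is a routine restatement of $Q_n=(D+UDU)^n1$; the content is exactly the passage from the inhomogeneous system with source terms $t^h$ to one Jacobi fraction with the stated coefficients, and the mechanism you sketch does not visibly deliver it. The last-passage decomposition of a path from $0$ to $k$ gives $H_0=\sum_{k\ge0}t^kx^k\,[1]_q[2]_q\cdots[k]_q\,F_0F_1\cdots F_k$, where $F_j$ is the excursion generating function above level $j$: this is an infinite sum of products of continued-fraction tails, not a J-fraction, and resumming it into one is precisely the nontrivial identity you would need to prove. Moreover, ``contracting each matched pair into one Jacobi level'' cannot be meant literally, since the Motzkin paths underlying $\mathfrak{F}(\mu_h,\lambda_h)$ have the same length $n$ as your $\pm1$-paths, so no length-halving contraction is available; and the powers $q^h$ in $\mu_h$ and $q^{2h-1}$, $q^{2h}$ inside $\lambda_h$ occur in none of the step weights $[h]_q,[h+1]_q,[h+2]_q$, so they must be created by the very transformation you leave unspecified. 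A complete proof must either exhibit a weight-preserving, length-preserving bijection from the endpoint-marked bidiagonal paths onto the weighted bicolored Motzkin paths of Propositions \ref{pro:Tn-weighting} and \ref{pro:T*n-weighting} (the sets $\T_n$ and $\T^{*}_n$), or produce a change of basis $M$ with $Me_0=e_0$ and $(1,t,t^2,\dots)M=(1,0,0,\dots)$ conjugating $D+UDU$ (resp.\ $D+DUU$) into a tridiagonal operator with diagonal $\mu^{Q}_h$ (resp.\ $\mu^{R}_h$) and off-diagonal products $\lambda^{Q}_h$ (resp.\ $\lambda^{R}_h$) --- the Matrix Ansatz route of \cite{CJW} that \cite{JV} follows. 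As written, your proposal is an accurate reduction together with a description of what remains to be shown, not a proof.
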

Notice that $Q_{2n}(0,q)=E_{2n}(q)$ and $R_{2n+1}(0,q)=E_{2n+1}(q)$, the $q$-secant and $q$-tangent numbers defined in Eqs.\,(\ref{eqn:q-secant}) and (\ref{eqn:q-tangent}).

A \emph{signed permutation} of $[n]$ is a bijection $\sigma$ of the set $[\pm n]:=\{-n,-n+1,\dots,-1,1,2,\dots,n\}$ onto itself such that $\sigma(-i)=-\sigma(i)$ for all $i\in[\pm n]$. It will be denoted as $\sigma=\sigma_1\sigma_2\cdots\sigma_n=(\sigma_1,\sigma_2,\dots,\sigma_n)$, where $\sigma_i=\sigma(i)$, called the \emph{window notation} of $\sigma$. Let $B_n$ be the set of signed permutations of $[n]$. Let $D_n\subset B_n$ be the subset consisting of the signed permutations with even number of negative entries in their window notations. Members of $D_n$ are called the \emph{even-signed permutations} of $[n]$.

For a $\sigma=\sigma_1\sigma_2\cdots\sigma_n\in B_n$, a \emph{crossing} of $\sigma$ is a pair $(i,j)$  with $i,j\ge 1$ such that
\begin{itemize}
  \item $i<j\le \sigma_i< \sigma_j$ or
  \item $-i<j\le -\sigma_i < \sigma_j$ or
  \item $i>j>\sigma_i>\sigma_j$.
\end{itemize}
The notion of crossing of signed permutations was first considered in \cite{CJW}. Let $\cro_B(\sigma)$ be the number of crossings of $\sigma$. For example, the crossings of $\sigma=(3,-4,-2,5,1)\in B_5$ consist of the pairs $(i,j)\in\{(3,1), (2,4)\}$ ($\{(3,2), (5,2), (5,3)\}$ respectively), which satisfy the second (third, respectively) condition. Hence $\cro_B(\sigma)=5$.

Let $\nega(\sigma)$ be the number of negative entries in $\sigma$ and let $\fwex(\sigma)=2\wex(\sigma)+\nega(\sigma)$, where $\wex(\sigma)=\#\{i\in [n]: \sigma_i\ge i\}$. In particular, $\sigma_i$ is a \emph{fixed point} of $\sigma$ if $\sigma_i=i$.
Let $B_n^{*}$ ($D_n^{*}$, respectively) denote the subset of $B_n$ ($D_n$, respectively) consisting of the signed permutations of $[n]$ without fixed points.

Our first main result is the type B and type D analogs of
the results in Eqs.\,(\ref{JV1}) and (\ref{JV2}), with the sign of $\sigma\in B_n$ depending on the parity of one half of the statistic $\fwex(\sigma)$.

\begin{thm} \label{thm:signFwexB}
For $n\ge 1$, we have
\begin{enumerate}
\item  ${\displaystyle
\sum_{\sigma\in B_n}(-1)^{\lfloor \frac{\fwex(\sigma)}{2}\rfloor} t^{\nega(\sigma)}q^{\cro_B(\sigma)}
=\left[(-1)^{\lfloor\frac{n+1}{2}\rfloor}+(-1)^{\lfloor\frac{n}{2}\rfloor} t \right] R_{n-1}(t,q).
}$
\item  ${\displaystyle
\sum_{\sigma\in D_n}(-1)^{\lfloor\frac{\fwex(\sigma)}{2}\rfloor} t^{\nega(\sigma)}q^{\cro_B(\sigma)}
     =\left\{\begin{array}{ll} (-1)^\frac{n}{2}tR_{n-1}(t,q)  & \mbox{ if $n$ is even,}\\
	                           (-1)^\frac{n+1}{2}R_{n-1}(t,q) & \mbox{ if $n$ is odd.}
             \end{array}
      \right.
}$
\end{enumerate}
\end{thm}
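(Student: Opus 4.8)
The plan is to collapse the parity sign into an ordinary variable, reduce both parts to a single auxiliary series, and then specialize the Corteel, Josuat-Verg\`{e}s and Kim continued fraction for $(\wex,\nega,\cro_B)$ onto the fraction of Theorem~\ref{thm:QR-continued-fraction}. Since $\fwex(\sigma)=2\wex(\sigma)+\nega(\sigma)$, we have $\lfloor\fwex(\sigma)/2\rfloor=\wex(\sigma)+\lfloor\nega(\sigma)/2\rfloor$, so the sign factors as $(-1)^{\lfloor\fwex(\sigma)/2\rfloor}=(-1)^{\wex(\sigma)}(-1)^{\lfloor\nega(\sigma)/2\rfloor}$. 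Writing $\mathbf{i}=\sqrt{-1}$ and setting
\[
F_n(s):=\sum_{\sigma\in B_n}(-1)^{\wex(\sigma)}s^{\nega(\sigma)}q^{\cro_B(\sigma)},
\]
the identity $\mathbf{i}^{k}=\mathbf{i}^{k\bmod 2}(-1)^{\lfloor k/2\rfloor}$ shows that the even-$\nega$ terms of $F_n(\pm\mathbf{i}t)$ carry exactly the weight $(-1)^{\nega/2}t^{\nega}$ demanded by part~(2), while the odd-$\nega$ terms carry $\pm\mathbf{i}(-1)^{(\nega-1)/2}t^{\nega}$. Taking even and odd parts in $s$ then yields the clean reductions
\[
\sum_{\sigma\in D_n}(-1)^{\lfloor\fwex/2\rfloor}t^{\nega}q^{\cro_B}=\frac{1}{2}\big(F_n(\mathbf{i}t)+F_n(-\mathbf{i}t)\big),
\]
\[
\sum_{\sigma\in B_n}(-1)^{\lfloor\fwex/2\rfloor}t^{\nega}q^{\cro_B}=\frac{1}{2}\big((1-\mathbf{i})F_n(\mathbf{i}t)+(1+\mathbf{i})F_n(-\mathbf{i}t)\big),
\]
so the whole theorem follows once $F_n(\pm\mathbf{i}t)$ is evaluated.

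First I would invoke the Corteel, Josuat-Verg\`{e}s and Kim expansion $\sum_{n\ge0}x^n\sum_{\sigma\in B_n}y^{\wex}s^{\nega}q^{\cro_B}=\mathfrak{F}(\mu^{B}_h,\lambda^{B}_h)$ and set $y=-1$, obtaining an auxiliary continued fraction $\mathfrak{F}(\hat\mu_h,\hat\lambda_h)$ for $\sum_{n}F_n(s)\,x^n$. The sign $y=-1$ forces the excedance contributions to telescope; the collapse is already visible in the first coefficients, where a direct count over $B_1$ and $B_2$ gives $\hat\mu_0=s-1$ and $\hat\lambda_1=(s-1)(qs+1)$. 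The heart of the argument is to evaluate $\hat\mu_h,\hat\lambda_h$ at $s=\pm\mathbf{i}t$ and to recognize the outcome as the fraction $\mathfrak{F}(\mu^{R}_h,\lambda^{R}_h)$ of Theorem~\ref{thm:QR-continued-fraction}, with $\mu^{R}_h=tq^h(1+q)[h+1]_q$ and $\lambda^{R}_h=(1+t^2q^{2h})[h]_q[h+1]_q$, after a single contraction at the top of the continued fraction. That one contraction is precisely what replaces $R_n$ by $R_{n-1}$ and manufactures the linear-in-$t$ coefficient in the statement.

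Extracting coefficients from the matched fraction should then give $F_n(\mathbf{i}t)=c_n\,R_{n-1}(t,q)$ with $c_{2m}=(-1)^m(t+\mathbf{i})$ and $c_{2m+1}=(-1)^m(\mathbf{i}t-1)$, and $F_n(-\mathbf{i}t)$ the complex conjugate; I have checked these against a direct enumeration of $B_n$ for $n\le 2$. Substituting into the two reductions of the first paragraph and collecting real and imaginary parts produces the prefactor $(-1)^{\lfloor(n+1)/2\rfloor}+(-1)^{\lfloor n/2\rfloor}t$ in part~(1) and the parity-split monomials $(-1)^{n/2}t$ (for $n$ even) and $(-1)^{(n+1)/2}$ (for $n$ odd) in part~(2).

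The main obstacle is the continued-fraction collapse of the second paragraph: one must verify that the specialization $y=-1$, $s=\pm\mathbf{i}t$ degenerates the three-parameter coefficients $\mu^{B}_h,\lambda^{B}_h$ exactly onto $\mathfrak{F}(\mu^{R}_h,\lambda^{R}_h)$, and account correctly for the single contraction responsible for the shift $n\mapsto n-1$. Because $\hat\lambda_h|_{s=\mathbf{i}t}$ is genuinely complex (for instance $\hat\lambda_1|_{s=\mathbf{i}t}=(\mathbf{i}t-1)(1+\mathbf{i}qt)$), the match to the real fraction $\mathfrak{F}(\mu^{R}_h,\lambda^{R}_h)$ emerges only after the data at $s=\mathbf{i}t$ and $s=-\mathbf{i}t$ are recombined, so the bookkeeping of the two evaluations must be kept in lockstep. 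Should the $[h]_q$-arithmetic prove unwieldy, the fallback is to run the cancellation combinatorially on the weighted bicolored Motzkin paths underlying the Corteel, Josuat-Verg\`{e}s and Kim bijection, constructing a sign-reversing involution that pairs paths of opposite $\wex$-parity while preserving $\nega$ and $\cro_B$ and leaving a residual family enumerated by $R_{n-1}(t,q)$.
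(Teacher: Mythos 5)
Your reduction of both parts to the two evaluations $F_n(\pm\mathbf{i}t)$ is sound, and your closed forms for $c_n$ are correct (they amount to $c_{m+1}=\mathbf{i}^{\,m}(s-1)$ at $s=\mathbf{i}t$, which after taking $\mathrm{Re}+\mathrm{Im}$, resp.\ $\mathrm{Re}$, reproduces exactly the prefactors in the statement). The paper reaches the same endpoint by a parallel but combinatorial route: it transports the sum to the CJK weighted bicolored Motzkin paths $\M_n$, restructures them by a two-to-one map $\Phi:\M_n\to\H_{n-1}$ with $\rho(\M_n)=(y^2+yt)\rho(\H_{n-1})$ (Proposition \ref{pro:restructure}; this is your ``single contraction at the top,'' realized by doubling each step $\U\mapsto\U\U$, $\L\mapsto\U\D$, $\W\mapsto\D\U$, $\D\mapsto\D\D$ and discarding the first and last half-steps), and then cancels via a sign-reversing involution $\Psi_1$ on $\H_{n-1}$ whose fixed points carry weight $y^{n-1}R_{n-1}(t,q)$, so that evaluation at $y=\sqrt{-1}$ annihilates everything else. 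Your algebraic cancellation in fact works more cleanly than you anticipate: the J-fraction data of $\H_n$ is $\mu^{\H}_h=[h+1]_q\bigl(1+y^2+ytq^h(1+q)\bigr)$ and $\lambda^{\H}_h=[h]_q[h+1]_q(y^2+ytq^h)(1+ytq^h)$, and substituting $y^2=-1$, $yt=s$ gives $\mu^{\H}_h=sq^h(1+q)[h+1]_q$ and $\lambda^{\H}_h=(s^2q^{2h}-1)[h]_q[h+1]_q$, which at $s=\mathbf{i}t$ is exactly $(\mathbf{i}\mu^{R}_h,-\lambda^{R}_h)$, i.e.\ $\mathfrak{F}(\mu^{R}_h,\lambda^{R}_h)$ evaluated at $\mathbf{i}x$. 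So the match to the $R$-fraction happens at each of $s=\mathbf{i}t$ and $s=-\mathbf{i}t$ \emph{separately}; no recombination is needed until the final extraction of real/imaginary (resp.\ even/odd in $\nega$) parts, and part (ii) then comes for free, whereas the paper needs the extra parity bookkeeping of Lemma \ref{lem:induced-map-psi1}.

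The one genuine gap is the contraction itself. The identity $\mathfrak{F}(\mu_h,\lambda_h)(x)=1+(y^2+yt)\,x\,\mathfrak{F}(\mu^{\H}_h,\lambda^{\H}_h)(x)$ is not a formal property of arbitrary J-fractions: it depends on the multiplicative structure of the CJK weights (each step weight factoring as a product of two ``half-step'' weights), and it is precisely what Proposition \ref{pro:restructure} proves bijectively. You assert it without either the bijection or an S-fraction contraction/extension argument, and you also place the burden of the collapse on recombining $s=\pm\mathbf{i}t$, which, as noted above, is not where the difficulty lies. Once the contraction is supplied — either combinatorially as in the paper or by uncontracting both J-fractions to S-fractions — the rest of your computation closes the proof.
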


\begin{thm} \label{thm:signFwexD}
For $n\ge 1$, we have
\begin{enumerate}
\item ${\displaystyle
\sum_{\sigma\in B_n^{*}} \left(-\frac{1}{q}\right)^{\lfloor\frac{\fwex(\sigma)}{2}\rfloor} t^{\nega(\sigma)}q^{\cro_B(\sigma)}
=\left(-\frac{1}{q}\right)^{\lfloor\frac{n}{2}\rfloor}Q_n(t,q).
}$
\item ${\displaystyle
\sum_{\sigma\in D_n^{*}} \left(-\frac{1}{q}\right)^{\lfloor\frac{\fwex(\sigma)}{2}\rfloor} t^{\nega(\sigma)}q^{\cro_B(\sigma)}
     =\left\{\begin{array}{ll} \big(-\frac{1}{q}\big)^\frac{n}{2}Q_{n}(t,q) & \mbox{ if $n$ is even,}\\
	                            0 & \mbox{ if $n$ is odd.}
             \end{array}
      \right.
}$
\end{enumerate}
\end{thm}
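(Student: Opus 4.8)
The plan is to prove part~(1), over $B_n^{*}$, by interpreting the signed sum as a weighted enumeration of paths and matching it against the $J$-fraction of Theorem~\ref{thm:QR-continued-fraction}; part~(2), over $D_n^{*}$, will then drop out of part~(1) by a parity argument in the variable $t$. Throughout I use Flajolet's theorem that a continued fraction $\mathfrak{F}(\mu_h,\lambda_h)$ is the generating function $\sum_n x^n\sum_{|P|=n}\mathrm{wt}(P)$ of weighted Motzkin paths, a level step at height $h$ carrying weight $\mu_h$ and each matched up--down pair at height $h$ carrying weight $\lambda_h$; in particular Theorem~\ref{thm:QR-continued-fraction} gives $Q_n(t,q)=\sum_{|P|=n}\mathrm{wt}_Q(P)$ for the weights $\mu^{Q}_h,\lambda^{Q}_h$.

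For part~(1), the input is the joint distribution of $(\exc,\cro_B,\nega)$ over fixed-point-free signed permutations due to Corteel, Josuat-Verg\`{e}s and Kim. Through the type~B analogue of the Foata--Zeilberger correspondence (Laguerre histories), each $\sigma\in B_n^{*}$ is encoded as a weighted bicolored Motzkin path of length $n$ so that $\exc(\sigma)$ is recorded by the up steps and colored level steps, $\nega(\sigma)$ by the colors, and $\cro_B(\sigma)$ by the heights; the absence of fixed points deletes one family of level-step weights. This yields
\[
\sum_{n\ge 0}x^n\sum_{\sigma\in B_n^{*}}y^{\exc(\sigma)}t^{\nega(\sigma)}q^{\cro_B(\sigma)}=\mathfrak{F}(\widetilde{\mu}_h,\widetilde{\lambda}_h)
\]
for explicit weights $\widetilde{\mu}_h,\widetilde{\lambda}_h$ in $y,t,q$. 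Since $B_n^{*}$ has no fixed points, $\wex=\exc$ there, so $\lfloor\fwex(\sigma)/2\rfloor=\exc(\sigma)+\lfloor\nega(\sigma)/2\rfloor$; thus the sign $(-1/q)^{\lfloor\fwex/2\rfloor}$ splits into an excedance part $(-1/q)^{\exc}$ and a negativity-parity part $(-1/q)^{\lfloor\nega/2\rfloor}$.

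The heart of the argument is to specialize $y=-1/q$ and to absorb the factor $(-1/q)^{\lfloor\nega/2\rfloor}t^{\nega}$. As $t$ enters the path weights only through the colors recording negative letters, and $(-1/q)^{\lfloor\nega/2\rfloor}t^{\nega}=(-t^2/q)^{\lfloor\nega/2\rfloor}t^{\nega\bmod 2}$, each pair of negative letters should contribute a factor $-t^2/q$, reproducing the two-term shape $1+t^2q^{2h-1}$ of the Josuat-Verg\`{e}s weight $\lambda^{Q}_h=(1+t^2q^{2h-1})[h]^2_q$, while $y=-1/q$ supplies the excedance sign. It then remains to identify the specialized continued fraction with $\mathfrak{F}(\mu^{Q}_h,\lambda^{Q}_h)$ up to the global prefactor $(-1/q)^{\lfloor n/2\rfloor}$ in each degree $n$. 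This is the step I expect to be the main obstacle: writing $n=2u+\ell$ for a path with $u$ up--down pairs and $\ell$ level steps gives $\lfloor n/2\rfloor=u+\lfloor\ell/2\rfloor$, and although $u$ contributes cleanly one factor $-1/q$ per pair (folded into $\lambda_h$), the summand $\lfloor\ell/2\rfloor$ is non-local, depending on the parity of the total number of level steps and on the single possible unpaired negative letter. Reconciling this parity --- by a sign-reversing involution cancelling the offending odd contributions, or by an even-part contraction of the continued fraction that builds in the halving --- is where the work concentrates.

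Granting part~(1), part~(2) follows from the parity of $Q_n$ in $t$. Set $S(t):=\sum_{\sigma\in B_n^{*}}(-1/q)^{\lfloor\fwex(\sigma)/2\rfloor}t^{\nega(\sigma)}q^{\cro_B(\sigma)}$; replacing $t$ by $-t$ multiplies the $\sigma$-term by $(-1)^{\nega(\sigma)}$, so with $D_n^{*}=\{\sigma\in B_n^{*}:\nega(\sigma)\text{ even}\}$ and part~(1),
\[
\sum_{\sigma\in D_n^{*}}\left(-\tfrac1q\right)^{\lfloor\fwex(\sigma)/2\rfloor}t^{\nega(\sigma)}q^{\cro_B(\sigma)}=\tfrac12\bigl(S(t)+S(-t)\bigr)=\tfrac12\left(-\tfrac1q\right)^{\lfloor n/2\rfloor}\bigl(Q_n(t,q)+Q_n(-t,q)\bigr).
\]
Finally, under $t\mapsto -t$ both operators $U$ and $D$ change sign, so $D+UDU$ changes sign as well, whence $Q_n(-t,q)=(-1)^nQ_n(t,q)$ from $Q_n(t,q)=(D+UDU)^n1$. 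Substituting this, the bracket equals $2Q_n(t,q)$ when $n$ is even and $0$ when $n$ is odd, giving $(-1/q)^{n/2}Q_n(t,q)$ and $0$ respectively, as claimed.
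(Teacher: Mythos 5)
Your overall strategy for part~(1) is the same as the paper's --- encode $B_n^{*}$ as weighted bicolored Motzkin paths via the Corteel--Josuat-Verg\`es--Kim bijection, then reduce the signed sum to a path-weight identity matching $\mathfrak{F}(\mu^{Q}_h,\lambda^{Q}_h)$ --- but the proposal stops exactly where the proof actually lives. You write that reconciling the global prefactor $(-1/q)^{\lfloor n/2\rfloor}$ ``by a sign-reversing involution cancelling the offending odd contributions, or by an even-part contraction'' is ``where the work concentrates,'' and neither option is carried out. The paper does this work in three concrete pieces: (a) Lemma \ref{lem:no-xing}, identifying $B_n^{*}$ with the subset $\M^{*}_n$ of paths having no straight level step of weight $y^2$ (this is where fixed-point-freeness enters, and it is not automatic from the bijection --- it is a property of $\Gamma$ that must be stated); (b) an explicit involution $\Psi_2$ on $\M^{*}_n$ (Algorithm B) that toggles level-step types and matching-pair weights so that $\rho(\Psi_2(\mu))=y^{\pm 2}q^{\pm 1}\rho(\mu)$ off the fixed-point set, hence cancels in pairs at $y=\sqrt{-1/q}$; and (c) Lemma \ref{lem:Gn=y^nQn}, showing the fixed-point set $\G_n$ satisfies $\rho(\G_n)=y^nQ_n(t,q)$ by comparison with the weighting of Proposition \ref{pro:T*n-weighting}. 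Point (c) also dissolves the ``non-local $\lfloor\ell/2\rfloor$'' worry you raise: every surviving path is homogeneous of degree exactly $n$ in $y$ (each matched pair contributes $y^2$, each level step contributes $y$), so the prefactor is local after all. Without an explicit involution and an explicit description of its fixed points, part~(1) is not proved.

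Your derivation of part~(2) from part~(1) is correct and is a genuinely different, cleaner route than the paper's: the identity $\sum_{D_n^{*}}=\tfrac12(S(t)+S(-t))$ combined with $Q_n(-t,q)=(-1)^nQ_n(t,q)$ (which follows from $\iota U\iota=-U$ and $\iota D\iota=-D$ for $(\iota f)(t)=f(-t)$) immediately gives the dichotomy. The paper instead argues on the path side, observing that $\Psi_2$ preserves the power of $t$ and that fixed points have $t$-degree congruent to $n$ modulo $2$; your formal parity argument avoids re-entering the combinatorics and would be a legitimate simplification --- but it is conditional on part~(1), which remains the gap.
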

When $t=1$ and $q=1$, we obtain types B and D extensions of the results in Eqs.\,(\ref{Eulercan1}) and (\ref{Eulercan2}).

\begin{cor} \label{cor:(-1)-eval-Bn}
For $n\ge 1$, we have
\begin{enumerate}
\item ${\displaystyle
\sum_{\sigma\in B_n}(-1)^{\lfloor\frac{\fwex(\sigma)}{2}\rfloor}
      =\left\{\begin{array}{ll} (-1)^\frac{n}{2}2^nE_n & \mbox{ if $n$ is even,}\\
	                             0 &  \mbox{ if $n$ is odd.}
              \end{array}
       \right.
}$
\item ${\displaystyle
\sum_{\sigma\in D_n}(-1)^{\lfloor\frac{\fwex(\sigma)}{2}\rfloor}=(-1)^{\lfloor\frac{n+1}{2}\rfloor}2^{n-1} E_n.
}$
\end{enumerate}
\end{cor}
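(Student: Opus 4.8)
The plan is to obtain the Corollary simply as the specialization $t=q=1$ of Theorem~\ref{thm:signFwexB}. Under this substitution the factor $t^{\nega(\sigma)}q^{\cro_B(\sigma)}$ becomes $1$ for every $\sigma$, so the weighted sums on the left-hand sides of Theorem~\ref{thm:signFwexB} collapse to precisely the unweighted signed sums appearing in the Corollary. It therefore remains only to evaluate the right-hand sides at $t=q=1$, for which I would invoke the identity $R_{n-1}(1,1)=2^{n-1}E_n$. This is just the index-shifted form of the evaluation $R_n(1,1)=2^nE_{n+1}$ recorded earlier in the excerpt.

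For part~(ii) the conclusion is then immediate: the two branches of Theorem~\ref{thm:signFwexB}(2) yield $(-1)^{n/2}\,2^{n-1}E_n$ when $n$ is even and $(-1)^{(n+1)/2}\,2^{n-1}E_n$ when $n$ is odd, and both are captured by the single closed form $(-1)^{\lfloor (n+1)/2\rfloor}2^{n-1}E_n$, since $\lfloor (n+1)/2\rfloor$ equals $n/2$ for even $n$ and $(n+1)/2$ for odd $n$. For part~(i) the only point needing attention is the scalar prefactor $(-1)^{\lfloor (n+1)/2\rfloor}+(-1)^{\lfloor n/2\rfloor}$ obtained by setting $t=1$ in Theorem~\ref{thm:signFwexB}(1). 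A brief parity check disposes of the two cases: writing $n=2m$ shows both floors equal $m$, so the prefactor is $2(-1)^m=2(-1)^{n/2}$ and the right-hand side becomes $(-1)^{n/2}2^nE_n$; writing $n=2m+1$ shows the floors are $m+1$ and $m$, whose signs cancel, so the sum vanishes. This reproduces both cases of part~(i).

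I do not anticipate any genuine obstacle, since the substantive enumeration has already been carried out in Theorem~\ref{thm:signFwexB}; what remains is a routine specialization. The only places where care is needed are the index shift in $R_{n-1}(1,1)=2^{n-1}E_n$ and the elementary floor-function arithmetic that reconciles the piecewise right-hand sides of Theorem~\ref{thm:signFwexB} with the uniform closed forms stated in the Corollary.
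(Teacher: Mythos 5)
Your proposal is correct and matches the paper's approach exactly: the paper derives Corollary~\ref{cor:(-1)-eval-Bn} by setting $t=q=1$ in Theorem~\ref{thm:signFwexB} and using $R_{n-1}(1,1)=2^{n-1}E_n$, which is the index-shifted form of Josuat-Verg\`es's evaluation $R_n(1,1)=2^nE_{n+1}$. Your parity checks on the floor functions are the same routine verification the paper leaves implicit.
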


\begin{cor} \label{cor:(-1)-eval-Bn*}
For $n\ge 1$, we have
\begin{enumerate}
\item ${\displaystyle
\sum_{\sigma\in B_n^{*}}(-1)^{\lfloor\frac{\fwex(\sigma)}{2}\rfloor}=(-1)^{\lfloor\frac{n}{2}\rfloor} S_n.
}$
\item ${\displaystyle
\sum_{\sigma\in D_n^{*}}(-1)^{\lfloor\frac{\fwex(\sigma)}{2}\rfloor}
      =\left\{\begin{array}{ll} (-1)^\frac{n}{2} S_n & \mbox{ if $n$ is even,}\\
	                            0 & \mbox{ if $n$ is odd.}
              \end{array}
      \right.
}$
\end{enumerate}
\end{cor}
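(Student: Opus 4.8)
The plan is to derive both identities directly by specializing Theorem~\ref{thm:signFwexD} at $t=1$ and $q=1$, so that all the combinatorial work has already been done. First I would check that the weights collapse correctly: at $q=1$ one has $-\frac{1}{q}=-1$, and since the exponent $\lfloor\frac{\fwex(\sigma)}{2}\rfloor$ is a nonnegative integer, the factor $\left(-\frac{1}{q}\right)^{\lfloor\frac{\fwex(\sigma)}{2}\rfloor}$ specializes without incident to $(-1)^{\lfloor\frac{\fwex(\sigma)}{2}\rfloor}$. Simultaneously $t^{\nega(\sigma)}$ and $q^{\cro_B(\sigma)}$ both become $1$. Hence the left-hand sides of parts~(1) and~(2) of Theorem~\ref{thm:signFwexD} reduce to exactly the two signed sums over $B_n^{*}$ and $D_n^{*}$ appearing in the corollary.

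For the right-hand sides I would invoke the evaluation $Q_n(1,1)=S_n$ recorded above. In part~(1), the right-hand side $\left(-\frac{1}{q}\right)^{\lfloor\frac{n}{2}\rfloor}Q_n(t,q)$ becomes $(-1)^{\lfloor\frac{n}{2}\rfloor}S_n$, which is precisely part~(1) of the corollary. In part~(2), when $n$ is even the right-hand side $\left(-\frac{1}{q}\right)^{\frac{n}{2}}Q_n(t,q)$ becomes $(-1)^{\frac{n}{2}}S_n$, and when $n$ is odd it is already identically $0$ for every $q$ and $t$; these yield the two cases of part~(2). This completes the reduction.

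Since the corollary is a pure specialization, I do not expect any genuine obstacle: the entire content lies in Theorem~\ref{thm:signFwexD} together with the identity $Q_n(1,1)=S_n$. The only point deserving a word of care is the legitimacy of setting $q=1$ in a weight involving $\frac{1}{q}$; this is harmless because, after clearing the common factor $\left(-\frac{1}{q}\right)^{\lfloor n/2\rfloor}$, each side of Theorem~\ref{thm:signFwexD} is a genuine polynomial in $q$ (and $t$), so the substitution $q=1$ is valid and agrees with the $q\to 1$ limit.
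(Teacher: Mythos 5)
Your proposal is correct and matches the paper's own (implicit) argument: the corollary is obtained by setting $t=1$ and $q=1$ in Theorem~\ref{thm:signFwexD} and invoking Josuat-Verg\`es' evaluation $Q_n(1,1)=S_n$, exactly as you do. Your extra remark on the validity of the substitution $q=1$ is a reasonable precaution but not an issue the paper needed to address.
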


\begin{rem} {\rm
Corteel et al. \cite[Proposition 3.3]{CJK} proved that the number of type B descents $\des_B(\sigma)=\#\{i:\sigma_i>\sigma_{i+1}, 0\le i< n\}$ is equidistributed with $\lfloor \fwex(\sigma)/2\rfloor$ for $\sigma\in B_n$. The result in (i) of Corollary \ref{cor:(-1)-eval-Bn} coincides with the evaluation at $x=-1$ of the $\gamma$-nonnegativity result in \cite[Theorem 13.5]{Petersen}. See \cite[Theorem 1.2]{A} for a $\gamma$-nonnegativity result for the derangements $\sigma\in B_n^{*}$, making use of the statistic $\fexc(\sigma)=2\exc(\sigma)+\nega(\sigma)$. Since $\fwex(\sigma)$ agrees with $\fexc(\sigma)$ when $\sigma$ is a derangement, Athanasiadis' result  implies (i) of Corollary \ref{cor:(-1)-eval-Bn*} when setting $r=2$ and $x=-1$.
There is also a type D $\gamma$-nonnegativity result in \cite[Theorem 13.9]{Petersen}, which uses a varied descent number statistic of signed permutations. See \cite{Athana} for a recent survey of $\gamma$-positivity results appearing in various combinatorial or geometric contexts.
}
\end{rem}

\subsection{The generalized $t,q$-Euler numbers}
A signed permutation $\sigma=\sigma_1\sigma_2\cdots\sigma_n\in B_n$ is a \emph{snake} of size $n$ if $\sigma_1>\sigma_2<\sigma_3>\cdots \sigma_n$. Let $\SS_n\subset B_n$ be the set of snakes of size $n$. Let $\SS_n^{0}\subset \SS_n$ be the subset consisting of the snakes $\sigma$ with $\sigma_1>0$, and let $\SS_n^{00}\subset\SS_n^{0}$ be the subset consisting of the snakes $\sigma$ with $\sigma_1>0$ and $(-1)^n\sigma_n<0$. The sets $\SS_n$ and $\SS_n^{0}$ were introduced by Arnol'd \cite{Arnold} and the set $\SS_n^{00}$ was introduced by Josuat-Verg\`{e}s \cite{JV}.

For a snake $\sigma=\sigma_1\sigma_2\cdots\sigma_n\in B_n$, let $\cs(\sigma)$ denote the number of changes of sign through the entries $\sigma_1,\sigma_2,\dots,\sigma_n$, i.e., $\cs(\sigma):=\#\{i: \sigma_i\sigma_{i+1}<0,  0\le i\le n\}$ with the following convention for the entries $\sigma_0$ and $\sigma_{n+1}$:
\begin{itemize}
  \item $\sigma_0=-(n+1)$ and $\sigma_{n+1}=(-1)^n (n+1)$ if $\sigma\in\SS_n$;
  \item $\sigma_0=0$ and $\sigma_{n+1}=(-1)^n (n+1)$ if $\sigma\in\SS_n^{0}$;
  \item $\sigma_0=0$ and $\sigma_{n+1}=0$ if $\sigma\in\SS_n^{00}$.
\end{itemize}
Making use of a recursive method, Josuat-Verg\`{e}s \cite[Theorem 3.4]{JV} gave a combinatorial interpretation of $Q_n(t,1)$ and $R_n(t,1)$, with the distribution of $\cs(\sigma)$ for the variate $t$, i.e.,
\[
Q_n(t,1)=\sum_{\sigma\in\SS_n^{0}} t^{\cs(\sigma)}, \qquad R_n(t,1)=\sum_{\sigma\in\SS_{n+1}^{00}} t^{\cs(\sigma)}.
\]
Our second main result is a combinatorial interpretation of $Q_n(t,q)$ and $R_n(t,q)$. In terms of three-term patterns of permutations, we come up with the statistics $\pat_Q(\sigma)$ and $\pat_R(\sigma)$ of snakes (defined in Eq.\,(\ref{eqn:patQ}) and Eq.\,(\ref{eqn:patR}) in Section 5) for the variate $q$.

\begin{thm}
We have
\begin{enumerate}
\item ${\displaystyle
Q_n(t,q)=\sum_{\sigma\in \SS_n^{0}} t^{\cs(\sigma)}q^{\bca(|\sigma|)+\pat_Q(\sigma)}.
}$
\item ${\displaystyle
R_n(t,q)=\sum_{\sigma\in \SS^{00}_{n+1}} t^{\cs(\sigma)}q^{\bca(|\sigma|)+\pat_R(\sigma)-n-1}.
}$
\end{enumerate}
\end{thm}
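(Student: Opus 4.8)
The plan is to prove both identities through Flajolet's combinatorial theory of continued fractions, reducing each to a weight-preserving bijection between snakes and weighted bicolored Motzkin paths. By Theorem~\ref{thm:QR-continued-fraction} and Definition~\ref{def:J-fraction}, the polynomial $Q_n(t,q)$ is the coefficient of $x^n$ in the J-fraction $\mathfrak{F}(\mu^{Q}_h,\lambda^{Q}_h)$, and Flajolet's theorem expresses this coefficient as the generating polynomial $\sum_P w(P)$ over Motzkin paths $P$ of length $n$, where a level step at height $h$ carries weight $\mu^{Q}_h$ and a down-step from height $h$ to $h-1$ carries weight $\lambda^{Q}_h$. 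It therefore suffices to produce a bijection from $\SS_n^{0}$ onto these weighted paths under which $\cs(\sigma)$ records the total power of $t$ and $\bca(|\sigma|)+\pat_Q(\sigma)$ records the total power of $q$; the identity for $R_n(t,q)$ will follow from the analogous bijection on $\SS_{n+1}^{00}$ with the weights $\mu^{R}_h,\lambda^{R}_h$, together with the overall normalization $q^{-n-1}$ appearing in part~(2).

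First I would set up the encoding of a snake $\sigma=\sigma_1\cdots\sigma_n$ as a lattice path by scanning the positions $1,\dots,n$ and recording, for each entry, a step whose type (up, down, or level) is dictated by whether $\sigma_i$ is a \emph{valley}, \emph{peak}, \emph{double ascent}, or \emph{double descent} relative to its neighbours, exactly as in the Fran\c{c}on--Viennot and Foata--Zeilberger correspondences for ordinary permutations. The two signs $\pm$ of the entries are carried by a bicoloring of the steps, which is precisely what produces the two summands in $\mu^{Q}_h=tq^h([h]_q+[h+1]_q)$ and the two-term factor $1+t^2q^{2h-1}$ in $\lambda^{Q}_h$. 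The height reached before step $i$ should equal the number of \emph{open} arcs at position $i$, so that the range of admissible insertion slots has size $[h]_q$ or $[h+1]_q$, with the positional choice inside this range weighted by a power of $q$ counting exactly the vincular three-term patterns tracked by $\bca(|\sigma|)$ and the sign-sensitive corrections bundled into $\pat_Q(\sigma)$.

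The two statistics then separate cleanly along the path: each change of sign $\sigma_i\sigma_{i+1}<0$ (including those forced by the boundary conventions on $\sigma_0,\sigma_{n+1}$ that define $\SS_n^{0}$ and $\SS_{n+1}^{00}$) contributes one factor of $t$, matching the single $t$ in $\mu^{Q}_h$ and the $t^2$ in $\lambda^{Q}_h$, while every occurrence of the pattern $\bca$ in $|\sigma|$, adjusted by $\pat_Q(\sigma)$, contributes one factor of $q$. Since the known specialization $Q_n(t,1)=\sum_{\sigma\in\SS_n^{0}}t^{\cs(\sigma)}$ of Josuat-Verg\`{e}s already fixes the $t$-bookkeeping, I would use it as a consistency check on the coloring and concentrate the argument on the $q$-refinement.

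The main obstacle is the exact matching of the $q$-weights. One must show that, as the $i$th entry is inserted at height $h$, the number of newly created patterns of type $\bca$ (adjusted by $\pat_Q$) ranges over an arithmetic progression whose generating polynomial is precisely $q^h[h]_q$ or $q^h[h+1]_q$ for a level step and $[h]_q^2$ for a peak, and that the sign choice at that insertion either leaves the weight unchanged or multiplies it by $t^2q^{2h-1}$. This requires a careful case analysis of how each of the three defining inequalities for a type B crossing interacts with the peak--valley structure of a snake, and of how the boundary entries $\sigma_0$ and $\sigma_{n+1}$, which differ between $\SS_n^{0}$ and $\SS_{n+1}^{00}$, shift the heights and account for the normalizing factor $q^{-n-1}$ in part~(2). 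Assembling these local weight computations and checking that they reproduce $\mu^{Q}_h,\lambda^{Q}_h$ and $\mu^{R}_h,\lambda^{R}_h$ verbatim is where the bulk of the technical effort will lie.
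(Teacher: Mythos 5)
Your overall strategy is the right one and is the same as the paper's: interpret $Q_n(t,q)$ and $R_n(t,q)$ via Flajolet's theorem as weight-generating polynomials of bicolored Motzkin paths (this is exactly the content of Propositions \ref{pro:Tn-weighting} and \ref{pro:T*n-weighting}), and then build a Fran\c con--Viennot-type bijection from $\SS_n^{0}$ and $\SS_{n+1}^{00}$ onto those paths in which valleys, peaks, double ascents and double descents of $|\sigma|$ become $\U$, $\D$, $\L$, $\W$ steps, with $t$ recording sign changes and $q$ recording the pattern statistics. However, the proposal stops exactly where the proof begins. The entire content of the theorem is the explicit local weight assignment, and you leave it as ``a careful case analysis \ldots where the bulk of the technical effort will lie.'' The paper's mechanism, which you would need to reinvent, is the pair of block statistics $\alpha(|\sigma|,k)$ (the number of blocks of $|\sigma|$ restricted to $\{0,1,\dots,k\}$, which controls the height of the $k$th step) and $\beta(|\sigma|,k)$ (the number of blocks to the right of the block containing $k$, which is the base $q$-exponent of that step), the height bookkeeping of Lemmas \ref{lem:heights} and \ref{lem:heights-00} showing the resulting weights land exactly in the admissible ranges of $\T^{*}_n$ and $\T_n$, and Lemma \ref{lem:pattern} identifying $\beta(|\sigma|,k)=\bca(|\sigma|,k)$ and $\alpha(|\sigma|,k)=\acb(|\sigma|,k)+\bca(|\sigma|,k)+1$, which is what converts the path weights into the stated statistic $\bca(|\sigma|)+\pat_Q(\sigma)$. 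Note also that $\pat_Q$ and $\pat_R$ are \emph{defined} (Eqs.\,(\ref{eqn:patQ}) and (\ref{eqn:patR})) precisely as the bookkeeping residue of this weighting scheme, so one cannot ``check'' the $q$-exponent against an independently given statistic; the definition and the bijection must be produced together. One further needs the sign-recovery argument (Lemma \ref{lem:sign-changes}, via Arnol'd's rules (R1)--(R3)) to see that the map is injective, and the explicit inverse algorithms to see it is surjective.

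Two concrete misdirections in your plan: first, the type B crossing statistic $\cro_B$ plays no role in this theorem, so analyzing ``the three defining inequalities for a type B crossing'' would lead nowhere; the $q$-statistic here is built from the vincular patterns $\acb$ and $\bca$ of $|\sigma|$. Second, the bicoloring of the level steps does not carry the signs of the entries; it distinguishes double ascents ($\L$) from double descents ($\W$), while all sign information is carried by the powers of $t$ in the weights and reconstructed from the $\cs$-vector via (R1)--(R3). As written, the proposal is a correct table of contents for the paper's proof rather than a proof.
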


\smallskip
The rest of the paper is organized as follows. In Section 2, we relate the polynomials $Q_n(t,q)$ and $R_n(t,q)$ to weighted bicolored Motzkin paths via the continued fraction expansions of their generating functions. We also review the connection between signed permutations and Motzkin paths established by Corteel et al. \cite{CJK}. In Section 3, we prove Theorem \ref{thm:signFwexB}, using the method of sign-reversing involution on the weighted paths. In Section 4, we consider the signed permutations without fixed points and prove Theorem \ref{thm:signFwexD}. In Section 5, we give a combinatorial interpretation of the polynomials $Q_n(t,q)$ and $R_n(t,q)$ as the enumerators of the snakes in $\SS^{0}_n$ and $\SS^{00}_{n+1}$, respectively.

\section{Continued fractions and weighted Motzkin paths}

A \emph{Motzkin path} of length $n$ is a lattice path from the origin to the point $(n,0)$ staying weakly above the $x$-axis, using the \emph{up step} $(1,1)$, \emph{down step} $(1,-1)$, and \emph{level step} $(1,0)$.  Let $\U$, $\D$ and $\L$ denote an up step, a down step and a level step, accordingly.

We consider a Motzkin path $\mu=z_1z_2\cdots z_n$ with a weight function $\rho$ on the steps. The \emph{weight} of $\mu$, denoted by $\rho(\mu)$, is defined to be the product of the weight $\rho(z_j)$ of each step $z_j$ for $j=1,2,\dots,n$.
The \emph{height} of a step $z_j$ is the $y$-coordinate of the starting point of $z_j$.
Making use of Flajolet's formula \cite[Proposition 7A]{Flaj}, the generating function for the weight count of the Motzkin paths can be expressed as a continued fraction.

\begin{thm} {\rm\bf (Flajolet)} \label{thm:Flajolet} For $h\ge 0$, let $a_h$, $b_h$ and $c_h$ be polynomials such that each monomial has coefficient 1. Let $M_n$ be the set of weighted Motzkin paths of length $n$ such that the weight of an up step  (down step or level step, respectively) at height $h$ is one of the monomials appearing in $a_h$ ($b_h$ or $c_h$, respectively). Then the generating function for $\rho(M_n)=\sum_{\mu\in M_n} \rho(\mu)$ has the expansion
\begin{equation} \label{eqn:cf-Motzkin}
\sum_{n\ge 0} \rho(M_n)x^n
      =\frac{1}{1-c_0x}{{}\atop{-}}\frac{a_0b_1x^2}{1-c_1x}{{}\atop{-}}\frac{a_1b_2x^2}{1-c_2x}{{}\atop{-}}\cdots
\end{equation}
\end{thm}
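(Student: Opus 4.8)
The plan is to prove the formula by the classical \emph{first-return decomposition} of Motzkin paths, organized by height. For each $h\ge 0$, let $f_h(x)=\sum_{\mu}\rho(\mu)x^{|\mu|}$, where the sum ranges over all weighted paths $\mu$ that start and end at height $h$, never descend below height $h$, and carry admissible weights (an up step at height $k$ weighted by a monomial of $a_k$, a down step at height $k$ by a monomial of $b_k$, a level step at height $k$ by a monomial of $c_k$). Since summing a step's weight over all its admissible monomials reproduces the full polynomial, an up step at height $k$ contributes a factor $a_k x$ to $f_h$, a down step at height $k$ a factor $b_k x$, and a level step a factor $c_k x$. In particular $f_0(x)=\sum_{n\ge 0}\rho(M_n)x^n$, because a path counted by $M_n$ is exactly a weighted Motzkin path of length $n$ from height $0$ to height $0$ staying weakly above the $x$-axis.

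First I would decompose a nonempty excursion at height $h$ according to its first step. If the first step is a level step, it has weight $c_h x$ and is followed by an arbitrary excursion at height $h$, contributing $c_h x\, f_h(x)$. If the first step is an up step (weight $a_h x$), I would cut the path at its first return to height $h$: between the initial up step and that return the path stays weakly above height $h+1$, so this middle portion is an excursion at height $h+1$ contributing $f_{h+1}(x)$; the return is a down step starting at height $h+1$, hence of weight $b_{h+1}x$; and what follows is again an arbitrary excursion at height $h$, contributing $f_h(x)$. Because $\rho$ is multiplicative over steps and each step's weight depends only on its type and height, these factors multiply, yielding the functional equation
\[
f_h(x)=1+c_h x\, f_h(x)+a_h b_{h+1}x^2\, f_{h+1}(x)\, f_h(x).
\]

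Solving this for $f_h$ gives $f_h(x)=\bigl(1-c_h x-a_h b_{h+1}x^2 f_{h+1}(x)\bigr)^{-1}$, and substituting this relation into itself repeatedly — first expressing $f_0$ through $f_1$, then $f_1$ through $f_2$, and so on — unfolds $f_0(x)$ into exactly the continued fraction in Eq.\,(\ref{eqn:cf-Motzkin}), with $h$-th denominator $1-c_h x$ and $h$-th numerator $a_{h-1}b_h x^2$. Combined with the identity $f_0(x)=\sum_n\rho(M_n)x^n$ from the first paragraph, this is the asserted expansion.

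The one point requiring care — and the main obstacle — is to make the infinite unfolding rigorous as an identity of formal power series, since the continued fraction is an infinite object. I would resolve this by noting that reaching height $h$ requires at least $h$ up steps and $h$ down steps, so any excursion contributing to the $x^n$-coefficient of $f_0$ uses only heights up to $\lfloor n/2\rfloor$. Consequently the finite truncation of the recursion obtained by setting $f_{N+1}\equiv 1$ (equivalently, truncating the continued fraction after level $N$) agrees with $f_0(x)$ through order $x^{2N+1}$; letting $N\to\infty$ pins down every coefficient, which is precisely the sense in which the formal continued fraction equals $\sum_n\rho(M_n)x^n$. The remaining verifications — that the first-return cut is a bijection onto the claimed triples of subpaths and that the weights factor correctly across the cut — are routine once the height conventions are fixed.
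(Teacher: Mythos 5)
The paper never proves this statement: Theorem \ref{thm:Flajolet} is imported verbatim as Proposition 7A of Flajolet's paper \cite{Flaj}, so there is no internal proof to compare yours against. Your argument is correct and complete, and it is the standard route to this result: the first-return decomposition gives the functional equation $f_h(x)=1+c_hx\,f_h(x)+a_hb_{h+1}x^2f_{h+1}(x)f_h(x)$, whose solution $f_h=\bigl(1-c_hx-a_hb_{h+1}x^2f_{h+1}\bigr)^{-1}$ unfolds into the continued fraction with denominators $1-c_hx$ and numerators $a_{h-1}b_hx^2$, exactly as in Eq.\,(\ref{eqn:cf-Motzkin}). You also correctly identify and dispose of the only genuinely delicate point, namely what the infinite continued fraction means as a formal power series: since a path of length $n$ never rises above height $\lfloor n/2\rfloor$, the truncation obtained by setting $f_{N+1}\equiv 1$ agrees with $f_0$ through order $x^{2N+1}$ (a path touching height $N+1$ has length at least $2N+2$), so the coefficients stabilize and the identity holds coefficientwise. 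For context, Flajolet's original proof runs the same decomposition at the level of formal languages (non-commutative series in the step alphabet) and then takes commutative images by substituting the weights; your version, phrased directly for the commutative generating functions $f_h$ with absolute heights (which conveniently avoids any height-shifting of weights in the recursion), is the textbook form of the same idea and fully suffices.
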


\subsection{Bicolored Motzkin paths} A \emph{bicolored Motzkin path} (also known as \emph{2-Motzkin path}) is a Motzkin path with two kinds of level steps, say \emph{straight} and \emph{wavy}, denoted by $\L$ and $\W$, respectively. For a nonnegative integer $h$, let $z^{(h)}$ denote a step $z$ at height $h$ in a bicolored Motzkin path for $z\in\{\U,\L,\W,\D\}$.

By Theorem \ref{thm:QR-continued-fraction}, the initial part of the expansion of the generating functions for $R_n(t,q)$ and $Q_n(t,q)$ are shown below.
\begin{align*}
\sum_{n\ge 0} R_n(t,q)x^n &=\frac{1}{1-t(1+q)[1]_qx}{{}\atop{-}}\frac{(1+t^2q^2)[1]_q[2]_qx^2}{1-tq(1+q)[2]_qx}{{}\atop{-}}\frac{(1+t^2q^4)[2]_q[3]_qx^2}{1-tq^2(1+q)[3]_qx}\cdots, \\
\sum_{n\ge 0} Q_n(t,q)x^n &=\frac{1}{1-t[1]_qx}{{}\atop{-}}\frac{(1+t^2q)[1]_q^2x^2}{1-tq([1]_q+[2]_q)x}{{}\atop{-}}\frac{(1+t^2q^3)[2]_q^2x^2}{1-tq^2([2]_q+[3]_q)x}\cdots.
\end{align*}

By Theorem \ref{thm:Flajolet}, the following observations provide feasible weighting schemes that realize the polynomials  $R_n(t,q)$ and $Q_n(t,q)$ in terms of the bicolored Motzkin paths. Those paths will be used to encode the snakes in $\SS^{00}_{n+1}$ and $\SS^{0}_n$ (Theorem \ref{thm:T->snakes-00} and Theorem \ref{thm:T*->snake-0})

\begin{pro} \label{pro:Tn-weighting}
Let $\T_n$ be the set of weighted bicolored Motzkin paths of length $n$ with a weight function $\rho$ such that for $h\ge 0$,
\begin{itemize}
\item $\rho(\U^{(h)})\in\{1,q,\dots,q^{h}\}\cup\{t^2q^{2h+2},t^2q^{2h+3},\dots,t^2q^{3h+2}\}$,
\item $\rho(\L^{(h)})\in\{tq^{h+1},tq^{h+2},\dots,tq^{2h+1}\}$,
\item $\rho(\W^{(h)})\in\{tq^{h},tq^{h+1},\dots,tq^{2h}\}$,
\item $\rho(\D^{(h+1)})\in\{1,q,\dots,q^{h+1}\}$.
\end{itemize}
Then we have
\[
\sum_{n\ge 0} \rho(\T_n)x^n = \sum_{n\ge 0} R_n(t,q)x^n.
\]
\end{pro}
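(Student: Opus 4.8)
The plan is to invoke Flajolet's formula (Theorem~\ref{thm:Flajolet}) with $M_n=\T_n$. Since the weighting scheme specifies, for each step type at each height $h$, a \emph{set} of admissible monomials (each with coefficient $1$), the polynomials $a_h$, $b_h$, $c_h$ of Theorem~\ref{thm:Flajolet} are precisely the sums of the monomials in those sets; for the bicolored path the level-step polynomial $c_h$ collects the contributions of both the straight steps $\L^{(h)}$ and the wavy steps $\W^{(h)}$. Flajolet's formula then expresses $\sum_{n\ge 0}\rho(\T_n)x^n$ as the continued fraction~(\ref{eqn:cf-Motzkin}), so it suffices to verify the two families of identities $c_h=\mu^{R}_h$ and $a_{h-1}b_h=\lambda^{R}_h$: once these hold, (\ref{eqn:cf-Motzkin}) becomes the J-fraction $\mathfrak{F}(\mu^{R}_h,\lambda^{R}_h)$, which equals $\sum_{n\ge 0}R_n(t,q)x^n$ by Theorem~\ref{thm:QR-continued-fraction}.

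First I would collapse each admissible set to a $q$-integer via a finite geometric sum, the key observation being that each run consists of exactly $h+1$ consecutive powers of $q$ (up to a prefactor). The straight level steps contribute $tq^{h+1}+\cdots+tq^{2h+1}=tq^{h+1}[h+1]_q$ and the wavy ones contribute $tq^{h}+\cdots+tq^{2h}=tq^{h}[h+1]_q$, so that
\[
c_h=tq^{h+1}[h+1]_q+tq^{h}[h+1]_q=tq^{h}(1+q)[h+1]_q=\mu^{R}_h.
\]
For the up steps at height $h$, the block $\{1,\dots,q^{h}\}$ sums to $[h+1]_q$ and the block $\{t^2q^{2h+2},\dots,t^2q^{3h+2}\}$ sums to $t^2q^{2h+2}[h+1]_q$, giving $a_h=(1+t^2q^{2h+2})[h+1]_q$; the down-step set $\{1,\dots,q^{h}\}$ at height $h$ gives $b_h=[h+1]_q$.

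It remains to combine these using the index shift built into Flajolet's pairing, namely $\lambda^{R}_h=a_{h-1}b_h$. Substituting $h\mapsto h-1$ into the formula for $a_h$ yields $a_{h-1}=(1+t^2q^{2h})[h]_q$, whence
\[
a_{h-1}b_h=(1+t^2q^{2h})[h]_q[h+1]_q=\lambda^{R}_h,
\]
exactly the numerator data of $\mathfrak{F}(\mu^{R}_h,\lambda^{R}_h)$. This completes the match and hence the identity $\sum_{n\ge 0}\rho(\T_n)x^n=\sum_{n\ge 0}R_n(t,q)x^n$.

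I do not expect a genuine obstacle here: the entire argument reduces to the bookkeeping of four finite geometric sums together with the reindexing $h\mapsto h-1$ relating $a_h$ to $\lambda^{R}_h$. The one point demanding care is counting the terms in each admissible set correctly—each run contains exactly $h+1$ consecutive powers of $q$, and in particular the down-step set $\{1,\dots,q^{h}\}$ at height $h$ contributes $[h+1]_q$ rather than $[h]_q$—since an off-by-one in any of these sums would spoil the agreement with $\mu^{R}_h$ or $\lambda^{R}_h$.
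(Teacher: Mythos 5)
Your proof is correct and follows the same route the paper takes (the paper leaves the verification implicit, merely displaying the first terms of the continued fraction and citing Flajolet's theorem): collapse each admissible weight set to a $q$-integer, identify $c_h=\mu^{R}_h$ and $a_{h-1}b_h=\lambda^{R}_h$, and conclude via Theorem \ref{thm:QR-continued-fraction}. All four geometric sums and the index shift are carried out correctly.
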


\begin{pro} \label{pro:T*n-weighting}
Let $\T^*_n$ be the set of  weighted bicolored Motzkin paths of length $n$ containing no wavy level steps on the $x$-axis, with a weight function $\rho$ such that for $h\ge 0$,
\begin{itemize}
\item $\rho(\U^{(h)})\in\{1,q,\dots,q^{h}\}\cup\{t^2q^{2h+1},t^2q^{2h+2},\dots,t^2q^{3h+1}\}$,
\item $\rho(\L^{(h)})\in\{tq^{h},tq^{h+1},\dots,tq^{2h}\}$,
\item $\rho(\W^{(h)})\in\{tq^{h},tq^{h+1},\dots,tq^{2h-1}\}$ for $h\ge 1$,
\item $\rho(\D^{(h+1)})\in\{1,q,\dots,q^{h}\}$.
\end{itemize}
Then we have
\[
\sum_{n\ge 0} \rho(\T^{*}_n)x^n = \sum_{n\ge 0} Q_n(t,q)x^n.
\]
\end{pro}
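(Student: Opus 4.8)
The plan is to read off the weighting scheme through Flajolet's continued fraction formula (Theorem~\ref{thm:Flajolet}) and to check that it reproduces exactly the J-fraction $\mathfrak{F}(\mu^{Q}_h,\lambda^{Q}_h)$ for $\sum_{n\ge0}Q_n(t,q)x^n$ furnished by Theorem~\ref{thm:QR-continued-fraction}. A bicolored Motzkin path is a Motzkin path whose level steps come in two distinguishable colors, so I would apply Theorem~\ref{thm:Flajolet} with the level-step contribution $c_h$ taken to be the combined weight of the straight and wavy level steps at height $h$, and with $a_h$, $b_h$ the total weights of the up and down steps at the relevant heights. Since the two colors are distinct steps their weights simply add, and the overlap of monomials between the straight and wavy ranges is harmless; the continued fraction depends only on the aggregate $c_h$. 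Theorem~\ref{thm:Flajolet} then gives $\sum_{n\ge0}\rho(\T^{*}_n)x^n=\mathfrak{F}(c_h,a_{h-1}b_h)$, and it remains to verify $c_h=\mu^{Q}_h$ and $a_{h-1}b_h=\lambda^{Q}_h$ for all relevant $h$.

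Next I would evaluate the four weight sums as geometric $q$-series. For the up steps the two listed ranges are disjoint, one being free of $t$ and the other carrying a factor $t^2$, so
\[
a_h=\sum_{j=0}^{h}q^{j}+t^2\sum_{j=2h+1}^{3h+1}q^{j}=[h+1]_q+t^2q^{2h+1}[h+1]_q=(1+t^2q^{2h+1})[h+1]_q,
\]
while the down steps give $\rho(\D^{(h+1)})$ summing to $\sum_{j=0}^{h}q^{j}=[h+1]_q$, i.e. $b_h=[h]_q$. For the level steps the straight family contributes $\sum_{j=h}^{2h}tq^{j}=tq^{h}[h+1]_q$ and, for $h\ge1$, the wavy family contributes $\sum_{j=h}^{2h-1}tq^{j}=tq^{h}[h]_q$, whence $c_h=tq^{h}([h]_q+[h+1]_q)$. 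The two identifications are now immediate: $c_h=\mu^{Q}_h$, and
\[
a_{h-1}b_h=(1+t^2q^{2h-1})[h]_q\cdot[h]_q=(1+t^2q^{2h-1})[h]^2_q=\lambda^{Q}_h,
\]
so $\mathfrak{F}(c_h,a_{h-1}b_h)=\mathfrak{F}(\mu^{Q}_h,\lambda^{Q}_h)$ equals $\sum_{n\ge0}Q_n(t,q)x^n$ by Theorem~\ref{thm:QR-continued-fraction}, which is the claim.

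The argument is essentially bookkeeping, so I expect no deep obstacle; the point most in need of care is the height-$0$ floor together with Flajolet's index convention. Because wavy level steps are forbidden on the $x$-axis, $c_0$ is the straight-step sum alone, $c_0=t$, and this must match $\mu^{Q}_0=t([0]_q+[1]_q)=t$; the restriction is precisely what makes the $h=0$ term come out right, and it is consistent with the fact that $[0]_q=0$ renders the wavy range $\{tq^{0},\dots,tq^{-1}\}$ empty in any case. Equally I would double-check that the level-$h$ numerator of the continued fraction is $a_{h-1}b_h x^2$, pairing the up step rising to height $h$ with the down step falling from height $h$, against the labelling $\rho(\D^{(h+1)})$ used in the statement, and verify each range endpoint and term count so that the sums $[h+1]_q$, $tq^{h}[h+1]_q$, and the rest are produced with the correct exponents.
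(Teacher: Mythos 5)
Your proposal is correct and follows exactly the route the paper intends: the paper states Proposition~\ref{pro:T*n-weighting} as an observation derived from Flajolet's formula (Theorem~\ref{thm:Flajolet}) applied to the displayed continued fraction for $\sum_{n\ge 0}Q_n(t,q)x^n$, and your explicit verification that $c_h=\mu^{Q}_h$ and $a_{h-1}b_h=\lambda^{Q}_h$ (including the $h=0$ boundary case forced by forbidding wavy level steps on the $x$-axis) supplies precisely the bookkeeping the paper leaves to the reader.
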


\subsection{Linking signed permutations to bicolored Motzkin paths} Corteel et al. \cite{CJK} considered a refined enumerator for the signed permutations in $B_n$, with respect to the statistics $\fwex$, $\nega$ and $\cro$, and define a sequence of polynomials $B_n(y,t,q)$  for $n\ge 0$ by
\begin{equation}
B_n(y,t,q)=\sum_{\sigma\in B_n} y^{\fwex(\sigma)}t^{\nega(\sigma)}q^{\cro_B(\sigma)}.
\end{equation}
For example, we have
\begin{align*}
B_0(y,t,q) &= 1,\\
B_1(y,t,q) &= y^2+yt, \\
B_2(y,t,q) &= y^4+(2t+tq)y^3+(t^2q+t^2+1)y^2+ty.
\end{align*}
Extended from Foata--Zeilberger bijection \cite{Foata-Zeil}, Corteel et al. established a bijection between the set $B_n$ of signed permutations and the following set of weighted bicolored Motzkin paths \cite[Subsection 7.1]{CJK} and obtained a continued fraction expansion of the generating function for the enumerators $B_n(y,t,q)$  \cite[Theorem 6.1]{CJK}.

\begin{defi} {\rm
Let $\M_n$ be the set of weighted bicolored Motzkin paths of length $n$ containing no wavy level steps on the $x$-axis, with a weight function $\rho$ such that for $h\ge 0$,
\begin{itemize}
\item $\rho(\U^{(h)})\in\{y^2,y^2q,\dots,y^2q^{h}\}\cup\{ytq^{h},ytq^{h+1},\dots,ytq^{2h}\}$,
\item $\rho(\L^{(h)})\in\{y^2,y^2q,\dots,y^2q^{h}\}\cup\{ytq^{h},ytq^{h+1},\dots,ytq^{2h}\}$,
\item $\rho(\W^{(h)})\in\{1,q,\dots,q^{h-1}\}\cup\{ytq^{h},ytq^{h+1},\dots,ytq^{2h-1}\}$ for $h\ge 1$,
\item $\rho(\D^{(h+1)})\in\{1,q,\dots,q^{h}\}\cup\{ytq^{h+1},ytq^{h+2},\dots,ytq^{2h+1}\}$.
\end{itemize}
}
\end{defi}

\begin{rem} {\rm
Rather than the original weight function given in \cite{CJK}, we have interchanged the possible weights of the up steps and the down steps, in the sense of traversing the paths backward. This unifies the possible weights for the initial step (either $\U^{(0)}$ or $\L^{(0)}$), for the purpose of restructuring the weighted bicolored Motzkin paths (Proposition \ref{pro:restructure}).
}
\end{rem}

\begin{thm}  \label{thm:Corteel} {\rm\bf (Corteel, Josuat-Verg\`es, Kim)}  There is a bijection $\Gamma$ between $B_n$ and $\M_n$ such that
\begin{equation}
\sum_{\sigma\in B_n} y^{\fwex(\sigma)}t^{\nega(\sigma)}q^{\cro_B(\sigma)} =\rho(\M_n).
\end{equation}
Moreover, let $\mu_h=y^2[h+1]_q+[h]_q+ytq^{h}([h]_q+[h+1]_q)$ for $h\ge 0$ and $\lambda_h=[h]_q^2(y^2+ytq^{h-1})(1+ytq^h)$ for $h\ge 1$, then we have
\begin{equation}  \label{eqn:B-enumerator-cf}
\sum_{n\ge 0} B_n(y,t,q) x^n = \mathfrak{F}(\mu_h,\lambda_h),
\end{equation}
where $\mathfrak{F}(\mu_h,\lambda_h)$ is the expansion of the J-fraction given in Definition \ref{def:J-fraction}.
\end{thm}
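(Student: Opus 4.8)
The plan is to prove the two assertions in turn: first construct the weight-preserving bijection $\Gamma\colon B_n\to\M_n$, and then read off the continued fraction by feeding the weighting of $\M_n$ into Flajolet's formula (Theorem \ref{thm:Flajolet}).

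For the bijection I would extend the Foata--Zeilberger encoding of a permutation as a Laguerre history \cite{Foata-Zeil}. Reading $\sigma\in B_n$ from position $i=1$ to $i=n$, I classify each position by the local order and sign data: whether $i$ is an excedance, deficiency, or fixed point of $\sigma$, combined with the analogous data for the value $i$ (that is, for $\sigma^{-1}(i)$), and the signs of the relevant entries. Each case dictates a step type: a position that opens a new arc gives an up step $\U$, one that closes an arc gives a down step $\D$, and the intermediate cases give level steps, the straight/wavy colors $\L,\W$ distinguishing the two excedance-versus-deficiency crossing situations. The height of a step equals the number of arcs left open at that moment, which is exactly the number of positions available to be crossed. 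The exponent of $q$ on the step records how many of these open arcs the newly placed entry crosses in the threefold sense of $\cro_B$, while the factors of $y$ and $t$ record that position's contribution to $\fwex=2\wex+\nega$: a positive weak excedance contributes $y^2$, a negative entry contributes $yt$, and a positive deficiency contributes $1$.

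I then verify two things. First, that the admissible weights generated at height $h$ coincide with the monomial sets prescribed in the definition of $\M_n$; for instance an up step at height $h$ either crosses $0$ to $h$ of the open positive arcs, giving $\{y^2,\dots,y^2q^h\}$, or is negative and crosses $h$ to $2h$ arcs, giving $\{ytq^h,\dots,ytq^{2h}\}$, and no wavy step can sit on the $x$-axis. Second, that the product $\rho(\Gamma(\sigma))$ equals $y^{\fwex(\sigma)}t^{\nega(\sigma)}q^{\cro_B(\sigma)}$, i.e. that the per-step $q$-exponents sum to the global crossing number. Bijectivity follows because the step types together with the recorded crossing multiplicities can be inverted arc by arc, reconstructing $\sigma$ uniquely; this gives $B_n(y,t,q)=\rho(\M_n)$.

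With this identity the continued fraction is immediate from Theorem \ref{thm:Flajolet}. Summing the admissible weights at height $h$ gives the level-step total $c_h=y^2[h+1]_q+[h]_q+ytq^h([h]_q+[h+1]_q)=\mu_h$ (with the wavy term absent at $h=0$, so $c_0=y^2+yt=\mu_0$), the up-step total $a_h=[h+1]_q(y^2+ytq^h)$, and the down-step total $b_h=[h]_q(1+ytq^h)$, whence $a_{h-1}b_h=[h]_q^2(y^2+ytq^{h-1})(1+ytq^h)=\lambda_h$; substituting into Eq.\,(\ref{eqn:cf-Motzkin}) yields $\mathfrak{F}(\mu_h,\lambda_h)$. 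The main obstacle is the bijection itself, and within it the bookkeeping showing that the three kinds of crossings counted by $\cro_B$ collapse, height by height, into the clean geometric ranges of $q$-powers above. The sign cases roughly double the arc configurations relative to the unsigned setting, and confirming that negative entries fall into the correct color and the correct band of $q$-exponents is precisely where the care is required.
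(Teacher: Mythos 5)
The paper does not actually prove this theorem: it is quoted from Corteel, Josuat-Verg\`es and Kim, with the bijection $\Gamma$ cited from \cite[Subsection 7.1]{CJK} and the J-fraction from \cite[Theorem 6.1]{CJK}, the only local modification (noted in the Remark following the definition of $\M_n$) being that the up- and down-step weights are interchanged by traversing the paths backward. Your outline follows exactly the strategy of that cited proof --- a Foata--Zeilberger-type encoding of $\sigma\in B_n$ as a weighted bicolored Motzkin path followed by Flajolet's theorem --- and your second half is complete and correct: summing the admissible weights of $\M_n$ gives $c_h=\mu_h$ (with the wavy term vanishing at $h=0$ since $[0]_q=0$), $a_h=[h+1]_q(y^2+ytq^h)$ and $b_h=[h]_q(1+ytq^h)$, hence $a_{h-1}b_h=\lambda_h$. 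What remains only sketched is the bijection itself, namely the case analysis verifying that each position's contribution to $\cro_B$ under the three-part definition of a type~B crossing lands in precisely the stated band of $q$-powers for its step type and color, and that the construction is invertible; that verification is the substantive content of the cited proof and would have to be written out in full (or the citation invoked) for your argument to stand on its own.
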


\section{Proof of Theorem \ref{thm:signFwexB}}
In this section we present a combinatorial proof of Theorem \ref{thm:signFwexB}, making use of the method of sign-reversing involution on paths.
First, we restructure the weighted bicolored Motzkin paths in $\M_n$.

\begin{defi} {\rm
	Let $\H_n$ be the set of weighted bicolored Motzkin paths of length $n$ with a weight function $\rho$ such that for $h\ge 0$,
\begin{itemize}
\item $\rho(\U^{(h)})\in\{y^2,y^2q,\dots,y^2q^{h+1}\}\cup\{ytq^{h+1},ytq^{h+2},\dots,ytq^{2h+2}\}$,
\item $\rho(\L^{(h)})\in\{1,q,\dots,q^{h}\}\cup\{ytq^{h+1},ytq^{h+2},\dots,ytq^{2h+1}\}$,
\item $\rho(\W^{(h)})\in\{y^2,y^2q,\dots,y^2q^{h}\}\cup\{ytq^{h},ytq^{h+1},\dots,ytq^{2h}\}$,
\item $\rho(\D^{(h+1)})\in\{1,q,\dots,q^{h}\}\cup\{ytq^{h+1},ytq^{h+2},\dots,ytq^{2h+1}\}$.
\end{itemize}
}
\end{defi}	
	
\begin{pro} \label{pro:restructure}
	There is a two-to-one bijection $\Phi$ between $\M_n$ and $\H_{n-1}$ such that
	\[
	\rho(\M_n)=(y^2+yt)\rho(\H_{n-1}).
	\]
\end{pro}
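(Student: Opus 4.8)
The plan is to peel the first step off each path of $\M_n$ and to recognize what remains as a path of $\H_{n-1}$ up to a weight-preserving reshaping. A path $\mu\in\M_n$ begins at height $0$, so its first step is either $\U^{(0)}$ or $\L^{(0)}$ (a wavy step is barred on the $x$-axis and a down step is impossible). In either case the admissible weights of this first step are exactly $\{y^2,yt\}$, and, crucially, this weight is an independent binary choice: it does not constrain the admissible weights of the remaining steps, which depend only on their heights (hence only on the \emph{shape} of the first step, not its weight). Forgetting the first-step weight therefore produces the factor $y^2+yt$ and is the source of the two-to-one-ness: the two preimages of a given image differ only in whether the first step carries weight $y^2$ or $yt$. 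In this way the statement reduces to a \emph{weight-preserving} bijection $\Psi$ between the set $X$ of first-step-removed paths — bicolored paths of length $n-1$ ending at height $0$ and starting at height $0$ (when the removed step was $\L^{(0)}$) or at height $1$ (when it was $\U^{(0)}$), carried with their $\M$-weights — and the set $\H_{n-1}$, so that $\Phi$ is the composite of $\Psi$ with the map forgetting the first-step weight.

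The engine for $\Psi$ is a dictionary obtained by directly comparing the two weighting schemes: the $\H$-weights of $\U^{(h)}$ equal the $\M$-weights of $\U^{(h+1)}$; the $\H$-weights of $\D^{(h+1)}$ equal the $\M$-weights of $\D^{(h+1)}$; the $\H$-weights of $\W^{(h)}$ equal the $\M$-weights of $\L^{(h)}$; and the $\H$-weights of $\L^{(h)}$ equal the $\M$-weights of $\W^{(h+1)}$, which in turn equal the $\M$-weights of $\D^{(h+1)}$. There is also the height-$0$ coincidence $\{y^2,yt\}=$ ($\M$-weights of $\U^{(0)}$) $=$ ($\M$-weights of $\L^{(0)}$) $=$ ($\H$-weights of $\W^{(0)}$). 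Reading these identities as translation rules, $\Psi$ should recolor straight level steps to wavy and wavy to straight, reinterpret each ascent by dropping its level by one (flattening the initial ascent out of height $0$ to a wavy step, where no lowering is possible), and — in the case where the removed first step was $\U^{(0)}$ — demote exactly one descent (the one accounting for the net drop from height $1$ to $0$) to a straight level step, using the coincidence that $\M$-weights of $\D^{(h+1)}$ equal those of $\W^{(h+1)}$. An up/down count confirms the arithmetic: a path from $0$ to $0$ has equally many up and down steps, so no descent is demoted, whereas a path from $1$ to $0$ carries one excess descent, forcing exactly one demotion.

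To finish I would verify that $\Psi$ preserves weights (immediate from the dictionary once the step correspondence is fixed) and is a bijection onto \emph{all} of $\H_{n-1}$. For the bijection I would construct the inverse explicitly, reading an $\H_{n-1}$ path and deciding at each level step whether it arose from a genuine $\M$ level step (same height), from a flattened ascent, or from a demoted descent; this is most cleanly organized by induction on the length through a first-return-to-$0$ decomposition, which splits a path into its initial elevated excursion and the remainder and lets the reshaping be applied recursively to the excursion.

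The step I expect to be the main obstacle is exactly this reshaping $\Psi$. The dictionary forces up steps to drop one level while interior down steps retain their height and one distinguished descent collapses to a level step, so there is \emph{no} uniform height shift: the naive ``lower everything by one'' map fails because it would misweight the down steps. Making the surgery canonical and proving it is a bijection onto the whole of $\H_{n-1}$ — not merely weight-consistent step by step — is the delicate point. As an independent check on the weight identity $\rho(\M_n)=(y^2+yt)\rho(\H_{n-1})$, I would compute the J-fractions of both generating functions from Theorem \ref{thm:Flajolet}: Theorem \ref{thm:Corteel} gives the one for $\sum_n\rho(\M_n)x^n$, while for $\H$ one finds $\mu^{\H}_h=[h+1]_q\bigl(1+y^2+ytq^h(1+q)\bigr)$ and $\lambda^{\H}_h=[h]_q[h+1]_q(y^2+ytq^h)(1+ytq^h)$, after which the functional identity $\sum_n\rho(\M_n)x^n=1+(y^2+yt)\,x\sum_n\rho(\H_n)x^n$ can be confirmed by a continued-fraction manipulation.
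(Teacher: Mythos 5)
Your overall strategy---peel off the first step to account for the factor $y^2+yt$ and the two-to-one collapse, then match the remaining $n-1$ steps against $\H_{n-1}$ using coincidences between the two weight schemes---is exactly the paper's frame, and your dictionary of weight-set equalities is correct. The genuine gap is the reshaping $\Psi$ itself, which you rightly single out as the delicate point but then specify incorrectly. Your recipe (lower every ascent by one level, flattening those at height $0$; keep every descent; swap the colors of level steps; demote at most one descent, and only when the removed step was $\U^{(0)}$) already fails on $\mu=\L^{(0)}\U^{(0)}\D^{(1)}\in\M_3$: removing $\L^{(0)}$ leaves $\U^{(0)}\D^{(1)}$, which your rule sends to $\W^{(0)}\D^{(1)}$---not a lattice path. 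The correct image is $\W^{(0)}\L^{(0)}$, so a descent is demoted even though the removed step was $\L^{(0)}$. Your up/down count misses that every ascent you flatten to a level step must be compensated by demoting a descent, so the number of demotions is not $0$ or $1$ but can be arbitrary; more fundamentally, no rule that looks at each step in isolation can work, because the type of the $j$th image step must depend on both $p_j$ and $p_{j+1}$.

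The paper resolves this with a half-step shift rather than a per-step recoloring: expand each step as $\U\mapsto\U\U$, $\L\mapsto\U\D$, $\W\mapsto\D\U$, $\D\mapsto\D\D$, putting the original weight on the first half-step and weight $1$ on the second; delete the first and last half-steps and re-pair the remaining $2n-2$ with an offset of one. The $j$th new step is then read off the pair (second half of $p_j$, first half of $p_{j+1}$) and inherits the weight $\rho(p_{j+1})$; for instance a $\D^{(h+1)}$ preceded by $\U$ or $\W$ becomes $\L^{(h)}$, while one preceded by $\L$ or $\D$ stays $\D^{(h+1)}$, which is exactly why the $\H$-weights of $\L^{(h)}$ and of $\D^{(h+1)}$ both coincide with the $\M$-weights of $\D^{(h+1)}$. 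This makes the surgery canonical and manifestly invertible, with the only discarded datum being $\rho(p_1)\in\{y^2,yt\}$, whence the two-to-one property. Your continued-fraction cross-check is a sound verification of the numerical identity (your $\mu^{\H}_h$ and $\lambda^{\H}_h$ are computed correctly), but it would not by itself yield the bijection, and the bijection---not just the identity---is what the later arguments (the restriction to $\H^{(1)}_{n-1}$ and $\H^{(2)}_{n-1}$ by parity of the power of $t$) actually use.
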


\begin{proof} Given a path $\mu=p_1p_2\cdots p_n\in\M_n$, we create a weight-preserving path $z_1z_2\cdots z_{2n}$ of length $2n$ from $\mu$ as the intermediate stage, where $z_{2i-1}z_{2i}$ is determined from $p_i$ by
\[
z_{2i-1}z_{2i}=\left\{\begin{array}{ll}
                   \U\U & \mbox{ if $p_i=\U$,}\\
	               \U\D & \mbox{ if $p_i=\L$,}\\
                   \D\U & \mbox{ if $p_i=\W$,}\\
                   \D\D & \mbox{ if $p_i=\D$,}
              \end{array}
      \right.
\]
with weight $\rho(z_{2i-1})=\rho(p_i)$ and $\rho(z_{2i})=1$ for $1\le i\le n$. Note that $\rho(z_1)=\rho(p_1)\in\{y^2,yt\}$ since $p_1=\U^{(0)}$ or $\L^{(0)}$. Then we construct the corresponding path $\Phi(\mu)=p'_1p'_2\cdots p'_{n-1}$ from $z_2\cdots z_{2n-1}$ (with $z_1$ and $z_{2n}$ excluded), where $p'_j$ is determined by $z_{2j}z_{2j+1}$ with weight $\rho(p'_j)=\rho(z_{2j})\rho(z_{2j+1})$  according to the following cases
\[
p'_j=\left\{\begin{array}{ll}
                   \U & \mbox{ if $z_{2j}z_{2j+1}=\U\U$,}\\
	               \L & \mbox{ if $z_{2j}z_{2j+1}=\U\D$,}\\
                   \W & \mbox{ if $z_{2j}z_{2j+1}=\D\U$,}\\
                   \D & \mbox{ if $z_{2j}z_{2j+1}=\D\D$,}
              \end{array}
      \right.
\]
for $1\le j\le n-1$. Notice that $\rho(\mu)=\rho(p_1)\rho(\Phi(\mu))$ since $\rho(p'_j)=\rho(p_{j+1})$ for $1\le j\le n-1$. Hence $\rho(\mu)=y^2\rho(\Phi(\mu))$ or $yt\rho(\Phi(\mu))$. Moreover, the possible weights of $p'_j$ can be determined from the steps of $\mu$ since
\[
	p'_j=\left\{\begin{array}{ll}
	\U^{(h)} & \mbox{ if $p_{j+1}=\U^{(h+1)}$ or $\L^{(h+1)}$,}\\
	\L^{(h)} & \mbox{ if $p_{j+1}=\W^{(h+1)}$ or $\D^{(h+1)}$,}\\
	\W^{(h)} & \mbox{ if $p_{j+1}=\U^{(h)}$ or $\L^{(h)}$,}\\
	\D^{(h+1)} & \mbox{ if $p_{j+1}=\D^{(h+1)}$ or $\W^{(h+1)}$,}
	\end{array}
	\right.
\]
for some $h\ge 0$. That $\Phi(\mu)\in\H_{n-1}$ follows from the weight function of the paths in $\M_n$.

It is straightforward to obtain the map $\Phi^{-1}$ by the reverse procedure.
\end{proof}

\begin{exa} {\rm Let $\mu=p_1p_2\dots p_8\in\M_8$ be the path shown on the left-hand side in the upper row of Figure \ref{fig:restructure-path}, where $w_j=\rho(p_j)$ for $1\le j\le 8$. The corresponding  bicolored Motzkin path $\Phi(\mu)=p'_1p'_2\cdots p'_7\in\H_7$  is shown on the right-hand side, where the intermediate stage $z_1z_2\cdots z_{16}$ is shown in the lower row of Figure \ref{fig:restructure-path}.
}
\end{exa}

\begin{figure}[ht]
\begin{center}
\psfrag{1}[][][0.85]{$1$}
\psfrag{w1}[][][0.85]{$w_1$}
\psfrag{w2}[][][0.85]{$w_2$}
\psfrag{w3}[][][0.85]{$w_3$}
\psfrag{w4}[][][0.85]{$w_4$}
\psfrag{w5}[][][0.85]{$w_5$}
\psfrag{w6}[][][0.85]{$w_6$}
\psfrag{w7}[][][0.85]{$w_7$}
\psfrag{w8}[][][0.85]{$w_8$}
\psfrag{Phi}[][][0.85]{$\Phi$}
\includegraphics[width=4.8in]{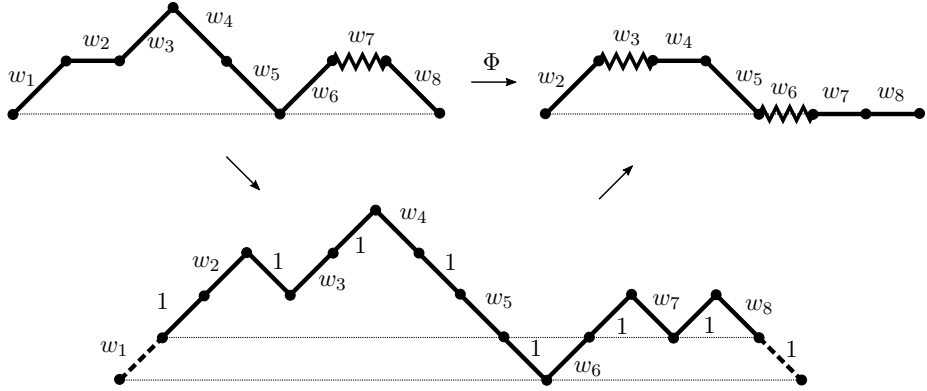}
\end{center}
\caption{\small A restructured weighted bicolored Motzkin paths.}
\label{fig:restructure-path}
\end{figure}

	By a \emph{matching} pair $(\U^{(h)},\D^{(h+1)})$ we mean an up step $\U^{(h)}$ and a down step $\D^{(h+1)}$ that face each other, in the sense that the horizontal line segment from the midpoint of $\U^{(h)}$ to the midpoint of $\D^{(h+1)}$ stays under the path.
	Let $\rho(\U^{(h)},\D^{(h+1)})$ denote the ordered pair $(\rho(\U^{(h)}),\rho(\D^{(h+1)}))$ of weights. We shall establish an involution $\Psi_1:\H_n\rightarrow\H_n$ that changes the weight of a path by the factor $y^2$, with the following set of restricted paths as the fixed points.

\begin{defi} \label{def:Fn} {\rm
Let $\F_n\subset\H_n$ be the subset consisting of the weighted paths satisfying the following conditions. For $h\ge 0$,
\begin{itemize}
\item $\rho(\U^{(h)},\D^{(h+1)})=(y^2q^a,q^b)$ or $(ytq^{h+1+a},ytq^{h+1+b})$ for some $a\in\{0,1,\dots, h+1\}$ and $b\in\{0,1,\dots, h\}$, for any matching pair $(\U^{(h)},\D^{(h+1)})$,
	\item $\rho(\L^{(h)})\in\{ytq^{h+1}, ytq^{h+2},\dots,ytq^{2h+1} \}$,
	\item $\rho(\W^{(h)})\in\{ytq^{h},ytq^{h+1},\dots,ytq^{2h}\}$.
\end{itemize}
}
\end{defi}

Notice that for any matching pair $(\U^{(h)},\D^{(h+1)})$ with weight $(a,b)$, it is equivalent to the reassignment $\rho(\U^{(h)},\D^{(h+1)})=(a',b')$ such that $a'b'=ab$ regarding the total weight of a path.
Comparing the weight functions of the paths in $\F_n$ and in the set $\T_n$ given in Proposition \ref{pro:Tn-weighting}, we have the following result.
	
\begin{lem} \label{lem:Fn=y^nRn} We have
\[
\rho(\F_n)=y^n R_n(t,q).
\]
\end{lem}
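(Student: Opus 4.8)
The plan is to prove $\rho(\F_n) = y^n R_n(t,q)$ by exhibiting a weight-preserving bijection between the paths in $\F_n$ and the paths in $\T_n$ of Proposition~\ref{pro:Tn-weighting}, after factoring out a uniform power of $y$. Since Proposition~\ref{pro:Tn-weighting} already gives $\sum_n \rho(\T_n)x^n = \sum_n R_n(t,q)x^n$, it suffices to show that the weighted paths of $\F_n$ are, up to the overall factor $y^n$, precisely the weighted paths of $\T_n$. The natural strategy is to compare the four bullet lists defining $\F_n$ (via Definition~\ref{def:Fn} together with the underlying weights inherited from $\H_n$) against the four bullet lists defining $\T_n$, and verify that dividing every weight monomial by an appropriate power of $y$ turns one list into the other.

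First I would account for the power of $y$. In $\T_n$ the weights carry no $y$ at all, whereas in $\F_n$ each step contributes a factor of $y$: an up/down matching pair in the first case of Definition~\ref{def:Fn} contributes $y^2 \cdot 1 = y^2$ (so $y$ per step across the two steps), and in the second case $ytq^{h+1+a}\cdot ytq^{h+1+b}$ again contributes $y^2$ over the two steps; each $\L^{(h)}$ and each $\W^{(h)}$ contributes exactly one factor $y$. Because every path of length $n$ in $\H_n$ has its up steps matched bijectively with its down steps and the remaining steps are level steps, the total number of steps is $n$ and the accumulated power of $y$ is exactly $y^n$. I would state this parity/matching count explicitly so that pulling out $y^n$ is justified uniformly over all of $\F_n$.

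Next I would set up the step-by-step dictionary. For level steps the correspondence is immediate: stripping the single $y$ from $\rho(\L^{(h)})\in\{ytq^{h+1},\dots,ytq^{2h+1}\}$ yields exactly $\{tq^{h+1},\dots,tq^{2h+1}\}$, matching $\rho_{\T}(\L^{(h)})$, and similarly $\{ytq^{h},\dots,ytq^{2h}\}$ for $\W^{(h)}$ strips to $\{tq^{h},\dots,tq^{2h}\}$, matching $\rho_{\T}(\W^{(h)})$. For the up/down steps I would use the remark immediately preceding the lemma: for a matching pair $(\U^{(h)},\D^{(h+1)})$ only the \emph{product} of the two weights matters for the total weight of the path, so I am free to redistribute. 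In the first case the product is $y^2 q^{a+b}$ with $a\in\{0,\dots,h+1\}$ and $b\in\{0,\dots,h\}$, giving exponents $a+b$ ranging over $\{0,1,\dots,2h+1\}$; after removing $y^2$ this is realized in $\T_n$ by splitting $q^{a+b}$ into $\rho(\U^{(h)})\in\{1,\dots,q^{h}\}$ times $\rho(\D^{(h+1)})\in\{1,\dots,q^{h+1}\}$, whose product exponents also range over $\{0,\dots,2h+1\}$ with the correct multiplicities. In the second case the product is $y^2 t^2 q^{2h+2+a+b}$ with the same ranges for $a,b$, giving $q$-exponents in $\{2h+2,\dots,4h+3\}$; removing $y^2$ and assigning the $t^2q^{2h+2}$-through-$t^2q^{3h+2}$ part to $\rho(\U^{(h)})$ and the residual $q$-power to $\rho(\D^{(h+1)})\in\{1,\dots,q^{h+1}\}$ reproduces the $\T_n$ scheme exactly.

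The main obstacle I anticipate is the bookkeeping for the up/down pairs: I must check that the multiset of product-weights arising from the range $a\in\{0,\dots,h+1\}$, $b\in\{0,\dots,h\}$ in $\F_n$ coincides, term by term in $q$, with the multiset of products $\rho(\U^{(h)})\rho(\D^{(h+1)})$ in $\T_n$, rather than merely having the same range of exponents. This requires confirming that the number of $(a,b)$ pairs with $a+b$ equal to a given value matches the number of $(\U,\D)$ weight choices with the same combined exponent. Invoking the redistribution remark reduces this to verifying equality of generating polynomials $[h+2]_q[h+1]_q$ (from $\F_n$) against $[h+1]_q[h+2]_q$ (from $\T_n$, reading $\{1,\dots,q^h\}$ and $\{1,\dots,q^{h+1}\}$), which are visibly equal; once this identity is in hand, summing over all paths of length $n$ and factoring out $y^n$ completes the proof.
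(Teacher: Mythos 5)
Your proof is correct and follows essentially the same route as the paper: both arguments first pull out the factor $y^n$ by noting that each matching pair contributes $y^2$ and each level step contributes $y$, and then identify the weight scheme of $\F_n$ with that of $\T_n$, using the remark that only the product of weights on a matching pair matters. The paper phrases the second step as an equality of continued fractions via Flajolet's theorem, whereas you verify the multiset identity $[h+2]_q[h+1]_q(1+t^2q^{2h+2})$ directly, but the content is the same.
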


\begin{proof} For any path $\mu\in\F_n$, notice that the weight of $\mu$ contains the factor $y^n$ since every matching pair of up step and down step contributes the parameter $y^2$, while every level step (either straight or wavy) contributes the parameter $y$.
By Theorem \ref{thm:Flajolet}, we observe that $\rho(\F_n)$ and $y^n\cdot \rho(\T_n)$ have same generating function. By Proposition \ref{pro:Tn-weighting}, the assertion follows.
\end{proof}

\smallskip
The `sign-reversing' map $\Psi_1:\H_n\rightarrow\H_n$ is constructed as follows.

\smallskip
\noindent
{\bf Algorithm A.}

Given a path $\mu\in\H_n$, the corresponding path $\Psi_1(\mu)$ is constructed by the following procedure.	
\begin{enumerate}
	\item[(A1)] If $\mu$ contains no straight level steps $\L^{(h)}$ with weight $q^a$ or wavy level steps $\W^{(h)}$ with weight $y^2q^a$ for any $a\in\{0,1,\dots,h\}$ then go to (A2). Otherwise, among such level steps find the first step $z$, say $z=\L^{(h)}$ ($\W^{(h)}$, respectively) with weight $\rho(z)=q^a$ ($y^2q^a$, respectively), then the corresponding path $\Psi_1(\mu)$ is obtained by replacing $z$ by $\W^{(h)}$ ($\L^{(h)}$, respectively) with weight $y^2q^a$ ($q^a$, respectively).
	\item[(A2)] If $\mu$ contains no matching pairs $(\U^{(h)},\D^{(h+1)})$ with $\rho(\U^{(h)},\D^{(h+1)})=(y^2q^a, ytq^{h+1+b})$ or $(ytq^{h+1+a}, q^{b})$ for any $a\in\{0,1,\dots, h+1\}$ or  $b\in\{0,1,\dots h\}$ then go to (A3). Otherwise, among such pairs find the first pair with weight, say $(y^2q^a, ytq^{h+1+b})$ ($(ytq^{h+1+a}, q^{b})$, respectively), then the corresponding path $\Psi_1(\mu)$ is obtained by replacing the weight of the pair by $(ytq^{h+1+a}, q^{b})$ ($(y^2q^a, ytq^{h+1+b})$, respectively).
	\item[(A3)] Then we have $\mu\in\F_n$. Let $\Psi_1(\mu)=\mu$.
	\end{enumerate}

\smallskip
Regarding the possibilities of the weighted steps of the paths in $\H_n$, we have the following immediate result.
	
\begin{pro} \label{pro:map-psi1} The map $\Psi_1$ established by Algorithm A is an involution on the set $\H_n$ such that for any path $\mu\in\H_n$,  $\Psi_1(\mu)=\mu$ if $\mu\in\F_n$, and $\rho(\Psi_1(\mu))=y^2\rho(\mu)$ or $y^{-2}\rho(\mu)$ otherwise.
\end{pro}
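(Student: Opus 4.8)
The plan is to verify directly that Algorithm~A defines a well-defined map on $\H_n$, that it is an involution, and that it has the stated effect on weights, by a careful case analysis of the three mutually exclusive branches (A1), (A2), (A3). The key structural observation is that these three branches partition $\H_n$: (A1) applies exactly when $\mu$ has a ``low-weight'' level step, (A2) applies when $\mu$ has no such level step but does have an ``unbalanced'' matching pair of the specified type, and (A3) is the remaining case. I would first check that in case (A3) the path indeed lies in $\F_n$, by comparing the surviving possibilities against Definition~\ref{def:Fn}: once every $\L^{(h)}$ and $\W^{(h)}$ avoids the excluded ``$q^a$'' and ``$y^2q^a$'' weights (so only the $ytq^{\,\cdot}$ weights remain, matching the second and third bullets of $\F_n$), and once no matching pair is of the mixed type $(y^2q^a,ytq^{h+1+b})$ or $(ytq^{h+1+a},q^b)$, the only admissible weights for a matching pair are the ``pure'' types $(y^2q^a,q^b)$ or $(ytq^{h+1+a},ytq^{h+1+b})$, which is precisely the first bullet of $\F_n$.

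Next I would establish that $\Psi_1$ is an involution. The crux is that the swap performed in (A1) (interchanging an $\L^{(h)}$ of weight $q^a$ with a $\W^{(h)}$ of weight $y^2q^a$, and vice versa) preserves height, preserves the \emph{position} of the step in the path, and maps the set of ``low-weight level steps'' to itself bijectively; hence the ``first such step'' selected in $\mu$ is the very same position selected in $\Psi_1(\mu)$, so applying (A1) twice returns $\mu$. Moreover a step touched by (A1) stays in the (A1)-eligible class, so $\Psi_1(\mu)$ is still processed by branch (A1), not by (A2) or (A3); this keeps the branches consistent under a second application. The same reasoning applies to (A2): swapping the weight of a matching pair between $(y^2q^a,ytq^{h+1+b})$ and $(ytq^{h+1+a},q^b)$ keeps both heights fixed, keeps the pair in the (A2)-eligible class, creates no new low-weight level step (so branch (A1) still does not fire), and leaves the ``first such pair'' invariant. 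I would record here the easy bookkeeping that each swap alters the total weight by exactly the factor $y^{2}$ or $y^{-2}$: in (A1) the level-step weight gains or loses a factor $y^2$, and in (A2) the factor $y^2$ is transferred from the up step to the down step or back, but since the matching pair's total picks up or sheds one factor $t q^{(\cdots)}$ against one factor $y^2$, a direct comparison of the two weight products yields the net factor $y^{\pm 2}$, never $y^0$.

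The main obstacle I anticipate is the ``first step / first pair'' selection rule interacting with the requirement that $\Psi_1$ be well defined and involutive simultaneously across branches: I must confirm that (A1) and (A2) never compete (they are separated by the hypothesis of (A2) that no (A1)-eligible step exists) and that neither swap can move a path out of its branch or change which object is ``first.'' The cleanest way to handle this is to note that both swaps are local (they alter only weights, never the step type $\U/\D$ nor any height), so the underlying lattice path and hence the linear order on steps and on matching pairs is untouched; the eligibility predicates depend only on a step's height and its weight-class, and each swap maps an eligibility class to itself. Granting these local invariants, the involution property and the weight-factor claim both follow immediately, and the identification of the fixed-point set with $\F_n$ is exactly the comparison carried out above, completing the proof of Proposition~\ref{pro:map-psi1}.
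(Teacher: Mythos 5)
Your proposal is correct and follows the same direct-verification route the paper intends: the paper states Proposition \ref{pro:map-psi1} as an immediate consequence of the listed weight possibilities for steps in $\H_n$, and your case analysis of (A1)--(A3) simply makes explicit the checks (branch separation, locality of the swaps, preservation of the ``first'' eligible step or pair, the $y^{\pm 2}$ weight ratio, and the identification of the residual case with $\F_n$) that the paper leaves to the reader.
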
	

Now we are ready to prove (i) of Theorem \ref{thm:signFwexB}.

\smallskip
\noindent
\emph{Proof of (i) of Theorem \ref{thm:signFwexB}.} By Theorem \ref{thm:Corteel} and Proposition \ref{pro:restructure}, we have
\[
\sum_{\sigma\in B_n} y^{\fwex(\sigma)}t^{\nega(\sigma)}q^{\cro_B(\sigma)} =(y^2+yt)\rho(\H_{n-1}).
\]
Then the expression $\sum_{\sigma\in B_n}(-1)^{\lfloor \frac{\fwex(\sigma)}{2}\rfloor} t^{\nega(\sigma)}q^{\cro_B(\sigma)}$ equals the sum of the real part and the imaginary part of the polynomial $(y^2+yt)\rho(\H_{n-1})$ evaluated at $y=\sqrt{-1}$. By  Proposition \ref{pro:map-psi1} and Lemma \ref{lem:Fn=y^nRn}, we have
\begin{align*}
(y^2+yt)\rho(\H_{n-1})\big|_{y=\sqrt{-1}}
            &= (y^2+yt)\rho(\F_{n-1})\big|_{y=\sqrt{-1}}  \\
            &= (y^{n+1}+y^nt) R_{n-1}(t,q)\big|_{y=\sqrt{-1}}.
\end{align*}
Taking the real part and the imaginary part of the above evaluation leads to
\[
\sum_{\sigma\in B_n}(-1)^{\lfloor \frac{\fwex(\sigma)}{2}\rfloor} t^{\nega(\sigma)}q^{\cro_B(\sigma)}=\left[(-1)^{\lfloor\frac{n+1}{2}\rfloor}+(-1)^{\lfloor\frac{n}{2}\rfloor} t \right] R_{n-1}(t,q),
\] 	
as required. \qed

\smallskip
In the following we shall prove (ii) of Theorem \ref{thm:signFwexB}. Recall that the set $D_n$ of even-signed permutations consists of the signed permutations with even number of negative entries.

\begin{defi} {\rm
Let $\M'_n\subset \M_n$ be the subset consisting of the  paths whose weights contain even powers of $t$. Let $\H^{(1)}_n$ ($\H^{(2)}_n$, respectively) be the subset of $\H_n$ consisting of the paths whose weights contain odd (even, respectively) powers of $t$.
}
\end{defi}

Notice that the bijection $\Gamma: B_n\rightarrow\M_n$ in Theorem \ref{thm:Corteel} induces a bijection between $D_n$ and $\M'_n$ such that
\begin{equation} \label{eqn:Dn-M'n}
\sum_{\sigma\in D_n} y^{\fwex(\sigma)}t^{\nega(\sigma)}q^{\cro_B(\sigma)} = \rho(\M'_n).
\end{equation}
Moreover, the involution $\Psi_1:\H_n\rightarrow\H_n$ and the set $\F_n$ of fixed points have the following properties.

\begin{lem} \label{lem:induced-map-psi1} The map $\Psi_1$ established by Algorithm A induces an involution on $\H^{(1)}_n$ and $\H^{(2)}_n$, respectively. Moreover, for any path $\mu\in\F_n$, the power of $t$ of $\rho(\mu)$ has the same parity of $n$.
\end{lem}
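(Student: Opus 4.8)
The plan is to read off both assertions directly from the explicit weights, tracking the exponent of $t$ throughout. The first assertion rests on the observation that every elementary move performed by $\Psi_1$ preserves not merely the parity but the exponent of $t$ itself. Indeed, step (A1) exchanges a straight level step $\L^{(h)}$ of weight $q^a$ with a wavy level step $\W^{(h)}$ of weight $y^2q^a$, and conversely; both of these weights are free of $t$, so the total $t$-exponent is unaffected. Step (A2) exchanges a matching pair of weight $(y^2q^a,\, ytq^{h+1+b})$ with one of weight $(ytq^{h+1+a},\, q^b)$, and conversely; in either configuration exactly one of the two steps carries a single factor $t$, so the total $t$-exponent is again unchanged. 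Hence $\Psi_1$ fixes the exponent of $t$ of every path in $\H_n$.

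Since $\Psi_1$ preserves the $t$-exponent, it maps $\H^{(1)}_n$ into itself and $\H^{(2)}_n$ into itself. As $\Psi_1$ is already an involution on all of $\H_n$ by Proposition \ref{pro:map-psi1}, its restriction to each of these two subsets is again an involution, which gives the first assertion.

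For the second assertion I would inspect the weight function of $\F_n$ in Definition \ref{def:Fn}. Every matching pair $(\U^{(h)},\D^{(h+1)})$ occurring in a path of $\F_n$ has weight either $(y^2q^a,q^b)$, contributing $t$-exponent $0$, or $(ytq^{h+1+a},ytq^{h+1+b})$, contributing $t$-exponent $2$; in both cases the contribution to the exponent of $t$ is even. On the other hand every level step, straight or wavy, carries exactly one factor of $t$. Writing $\mu\in\F_n$ with $k$ matching pairs (hence $2k$ up-and-down steps) and therefore $n-2k$ level steps, the total exponent of $t$ in $\rho(\mu)$ equals an even number plus $n-2k$, whose parity agrees with that of $n$. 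This establishes the second assertion.

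The whole argument is a finite verification, so there is no serious obstacle. The only point requiring care is to confirm that the moves described in (A1) and (A2) exhaust the ways a non-fixed path of $\H_n$ can be altered, and that each such move leaves the $t$-exponent intact; this is precisely what makes the restriction of $\Psi_1$ to $\H^{(1)}_n$ and to $\H^{(2)}_n$ well defined.
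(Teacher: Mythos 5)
Your proposal is correct and follows essentially the same route as the paper: both arguments note that each move in (A1) and (A2) preserves the exponent of $t$, so the involution restricts to $\H^{(1)}_n$ and $\H^{(2)}_n$, and both deduce the parity claim from the fact that matching pairs in $\F_n$ contribute $t^0$ or $t^2$ while each of the $n-2k$ level steps contributes exactly one factor of $t$. Your write-up merely makes explicit the case-checks that the paper leaves as observations.
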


\begin{proof} By Proposition \ref{pro:map-psi1}, we observe that the map $\Psi_1:\H_n\rightarrow\H_n$ preserves the powers of $t$ of the weight of the paths. By the
weight conditions of $\mu\in\F_n$ given in Definition \ref{def:Fn}, we observe that every matching pair $(\U^{(h)},\D^{(h+1)})$ contributes the parameter $t^0$ or $t^2$ to $\rho(\mu)$, while every level step contributes the parameter $t$ to $\rho(\mu)$. The assertions follow.
\end{proof}

Now we are ready to prove (ii) of Theorem \ref{thm:signFwexB}.

\smallskip
\noindent
\emph{Proof of (ii) of Theorem \ref{thm:signFwexB}.} By Proposition \ref{pro:restructure} and Eq.\,(\ref{eqn:Dn-M'n}),  taking the terms with even powers of $t$ yields
\[
\sum_{\sigma\in D_n} y^{\fwex(\sigma)}t^{\nega(\sigma)}q^{\cro_B(\sigma)} =y^2\rho(\H^{(2)}_{n-1})+yt\rho(\H^{(1)}_{n-1}).
\]
Consider the polynomial $y^2\rho(\H^{(2)}_{n-1})+yt\rho(\H^{(1)}_{n-1})$ evaluated at $y=\sqrt{-1}$. By Proposition \ref{pro:map-psi1} and Lemma  \ref{lem:induced-map-psi1}, for $n$ odd, we have $\rho(\H^{(1)}_{n-1})\big|_{y=\sqrt{-1}}=0$ and
\begin{align*}
y^2\rho(\H^{(2)}_{n-1})\big|_{y=\sqrt{-1}}
            &= y^2\rho(\F_{n-1})\big|_{y=\sqrt{-1}}  \\
            &= y^{n+1} R_{n-1}(t,q)\big|_{y=\sqrt{-1}},
\end{align*}
Moreover, for $n$ even, we have $\rho(\H^{(2)}_{n-1})\big|_{y=\sqrt{-1}}=0$ and
\begin{align*}
yt\rho(\H^{(1)}_{n-1})\big|_{y=\sqrt{-1}}
            &= yt\rho(\F_{n-1})\big|_{y=\sqrt{-1}}  \\
            &= y^{n}t R_{n-1}(t,q)\big|_{y=\sqrt{-1}}.
\end{align*}
Hence we have
\[
\sum_{\sigma\in D_n}(-1)^{\lfloor \frac{\fwex(\sigma)}{2}\rfloor} t^{\nega(\sigma)}q^{\cro_B(\sigma)}=\left\{
               \begin{array}{ll}
                (-1)^{\frac{n+1}{2}} R_{n-1}(t,q) &\mbox{if $n$ is odd,} \\
                (-1)^{\frac{n}{2}}tR_{n-1}(t,q) &\mbox{if $n$ is even.}
               \end{array}
               \right.
\] 	
The proof of Theorem \ref{thm:signFwexB} is completed.

\section{Proof of Theorem \ref{thm:signFwexD}}
In this section we prove Theorem \ref{thm:signFwexD} in terms of the weighted paths associated to the set $B^{*}_n$ of signed permutations without fixed points.

By the definition of the crossings of signed permutations, for any $\sigma=\sigma_1\sigma_2\cdots\sigma_n\in B_n$ we observe that if $(i,j)$ is a crossing of $\sigma$ then $\sigma_i\neq i$ and $\sigma_j\neq j$, i.e., the fixed points of $\sigma$ are not involved in any crossing of $\sigma$. The following fact is a property of the bijection $\Gamma:B_n\rightarrow\M_n$ given in Theorem \ref{thm:Corteel}.

\begin{lem} \label{lem:no-xing} For a $\sigma=\sigma_1\sigma_2\cdots\sigma_n\in B_n$, let $\Gamma(\sigma)=z_1z_2\cdots z_n\in\M_n$ be the corresponding weighted bicolored Motzkin path. Then for $j\in [n]$, $\sigma_j=j$ if and only if
the step $z_j$ is a straight level step with weight $y^2$.
\end{lem}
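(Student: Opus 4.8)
The plan is to unwind the explicit bijection $\Gamma:B_n\to\M_n$ of Theorem \ref{thm:Corteel} (the extension of the Foata--Zeilberger correspondence given in \cite{CJK}), in which the step $z_j$ together with its weight is read off from the local data at position $j$: the comparisons of $j$ with $\sigma_j$ and with $\sigma^{-1}(j)$, the sign of $\sigma_j$, and the crossings in which $j$ participates. Under this reading the $y$-part of $\rho(z_j)$ records the contribution of $j$ to $\fwex$, the $t$-part its contribution to $\nega$, and the $q$-part its contribution to $\cro_B$. The weight alphabets in the definition of $\M_n$ then say that a straight level step carries $y^2q^a$ precisely when $\sigma_j>0$ and $\sigma_j\ge j$ (a positive weak excedance of level type), carries $ytq^b$ when $\sigma_j<0$, while a wavy level step never carries the factor $y^2$.

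For the forward direction, suppose $\sigma_j=j$. Then $\sigma_j>0$, so the $t$-part of $\rho(z_j)$ is trivial; $j$ is a weak excedance, contributing the factor $y^2$; and since $\sigma_j=j=\sigma^{-1}(j)$ the position $j$ is neither a cycle valley nor a cycle peak, so $z_j$ is a level step, and being a positive weak excedance it is the straight one, $\L^{(h)}$. Finally, by the observation preceding the lemma a fixed point lies in no crossing of $\sigma$, so its contribution to the $q$-part is $q^0$. Hence $\rho(z_j)=y^2$, as claimed.

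For the converse, suppose $z_j=\L^{(h)}$ has weight exactly $y^2$. The factor $y^2$ (rather than $yt$) forces $\sigma_j>0$ and $\sigma_j\ge j$, and the level (rather than up/down) type forces $j$ not to be a cycle valley; thus $j$ is either a fixed point or a strict positive weak excedance of level type with $\sigma_j>j$. To exclude the latter, note that such a $j$ is matched from below in the arc diagram of $\sigma$: when $\sigma^{-1}(j)=i>0$ one checks directly that $i<j\le\sigma_i<\sigma_j$, a crossing of the first type, so $j$ participates in a crossing. The main obstacle lies exactly here: I must show that the explicit weight assignment of $\Gamma$ credits such a crossing to the step $z_j$, so that a strict positive double rise forces the $q$-exponent of $z_j$ to be at least $1$ and hence $\rho(z_j)\ne y^2$. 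Concretely, this amounts to verifying from the definition of $\Gamma$ in \cite[Subsection 7.1]{CJK} that the $q$-power on a straight level step at height $h$ counts the crossings in which $j$ is the relevant endpoint, so that the minimal value $q^0$ is attained only by self-matched positions; the signed cases $\sigma^{-1}(j)<0$ are handled by the analogous second-type crossing. Granting this bookkeeping, weight $y^2$ forces $\sigma_j=j$, which together with the forward direction yields the equivalence.
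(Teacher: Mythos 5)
First, a point of reference: the paper does not actually prove this lemma. It is stated as ``a property of the bijection $\Gamma$'' inherited from \cite{CJK}, with no argument supplied, so there is no proof of the authors' to compare yours against line by line. Your reconstruction has the right architecture: the forward direction is sound, and for the converse you correctly isolate the one nontrivial fact needed, namely that a strict weak excedance $j$ of level type participates in a crossing --- with $i=\sigma^{-1}(j)>0$ one gets $i<j\le\sigma_i<\sigma_j$ (first type), and with $\sigma^{-1}(j)=-i<0$ one gets $-i<j\le-\sigma_i<\sigma_j$ (second type).

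Nevertheless, the converse as written is incomplete, and you flag this yourself (``Granting this bookkeeping\dots''). The entire content of the backward implication is the claim that $\Gamma$ charges the crossing $(\sigma^{-1}(j),j)$ to the $q$-exponent of the step $z_j$, so that a strict level-type weak excedance cannot receive weight $y^2q^0$. This cannot be extracted from the weight alphabet of $\M_n$, since $y^2q^0$ is an admissible weight for $\L^{(h)}$ at every height $h$, nor from Theorem \ref{thm:Corteel} as stated, which only gives the global identity $\sum_{\sigma}y^{\fwex(\sigma)}t^{\nega(\sigma)}q^{\cro_B(\sigma)}=\rho(\M_n)$ and not the position-by-position reading of the weights that your whole argument presupposes. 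The verification genuinely requires the explicit local description of $\Gamma$ from \cite[Subsection 7.1]{CJK}, which you invoke but do not carry out; since that verification is the decisive step, the proof is conditional rather than complete. Two concrete pitfalls if you do carry it out: (1) the $\Gamma$ used in this paper is not literally the map of \cite{CJK} --- the remark preceding Theorem \ref{thm:Corteel} says the up-step and down-step weight alphabets have been interchanged (the path is read backward), so the convention for which endpoint of a crossing is charged to which step may flip accordingly; (2) the signed case $\sigma^{-1}(j)<0$ is dismissed in one clause but needs the same charging argument for second-type crossings. In short: right idea and correct key observation, but the step that actually proves the converse is deferred rather than done.
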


Let $\M^{*}_n\subset \M_n$ be the subset consisting of the paths containing no straight level steps with weight $y^2$. It follows from Lemma \ref{lem:no-xing} that the bijection $\Gamma:B_n\rightarrow\M_n$ induces a bijection between $B^{*}_n$ and $\M^{*}_n$ such that
\begin{equation} \label{eqn:expression-B*n}
\sum_{\sigma\in B^{*}_n} y^{\fwex(\sigma)}t^{\nega(\sigma)}q^{\cro_B(\sigma)} =\rho(\M^{*}_n).
\end{equation}

We shall establish an involution $\Psi_2:\M^{*}_n\rightarrow\M^{*}_n$ that changes the weight of a path by the factor $y^2q$, with the following set of restricted paths as the fixed points.

\begin{defi} \label{def:Gn} {\rm
Let $\G_n\subset\M^{*}_n$ be the subset consisting of the paths satisfying the following conditions. For $h\ge 0$,
\begin{itemize}
\item $\rho(\U^{(h)},\D^{(h+1)})=(y^2q^a,q^b)$ or $(ytq^{h+a},ytq^{h+1+b})$ for some $a,b\in\{0,1,\dots, h\}$, for any matching pair $(\U^{(h)},\D^{(h+1)})$,
	\item $\rho(\L^{(h)})\in\{ytq^{h}, ytq^{h+1},\dots,ytq^{2h}\}$,
	\item $\rho(\W^{(h)})\in\{ytq^{h},ytq^{h+1},\dots,ytq^{2h-1}\}$ for $h\ge 1$.
\end{itemize}
}
\end{defi}
	
Comparing the weight functions of the paths in $\G_n$ and in the set $\T^{*}_n$ given in Proposition \ref{pro:T*n-weighting},  the following result can be proved by the same argument as in the proof of Lemma \ref{lem:Fn=y^nRn}.
	
\begin{lem} \label{lem:Gn=y^nQn} We have
\[
\rho(\G_n)=y^n Q_n(t,q).
\]
\end{lem}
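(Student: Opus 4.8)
The plan is to mirror exactly the argument used to prove Lemma \ref{lem:Fn=y^nRn}, now comparing the weight function of the fixed-point set $\G_n$ from Definition \ref{def:Gn} against the weighting scheme of $\T^{*}_n$ from Proposition \ref{pro:T*n-weighting}. First I would observe that for any path $\mu\in\G_n$ the total weight carries an overall factor of $y^n$: each matching pair $(\U^{(h)},\D^{(h+1)})$ contributes $y^2$ (either from the $(y^2q^a,q^b)$ branch or from the $(ytq^{h+a},ytq^{h+1+b})$ branch, which also supplies $y^2$), while each level step, straight or wavy, contributes a single factor of $y$. Since the number of up steps equals the number of down steps and together with the level steps they account for all $n$ steps, the powers of $y$ sum to $n$, so $\rho(\G_n)=y^n\cdot\rho'(\G_n)$ where $\rho'$ denotes the weight with the $y$-contributions stripped away.

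Next I would extract the $q$- and $t$-weights from Definition \ref{def:Gn} and match them term by term to Proposition \ref{pro:T*n-weighting}. For a matching pair the weight is equivalent (in the sense noted before Lemma \ref{lem:Fn=y^nRn}, that only the product $ab$ matters for the total weight of a path) to assigning the up step $\U^{(h)}$ the weight $q^a$ for $a\in\{0,\dots,h\}$ or $t^2q^{2h+1+a'}$ (combining the two $t$-factors $tq^{h+a}$ and $tq^{h+1+b}$ and redistributing), matching the $\rho(\U^{(h)})\in\{1,\dots,q^h\}\cup\{t^2q^{2h+1},\dots,t^2q^{3h+1}\}$ of $\T^{*}_n$, while the down step $\D^{(h+1)}$ takes weight in $\{1,\dots,q^h\}$ as prescribed; the straight level weight $ytq^{h},\dots,ytq^{2h}$ yields (after removing $y$) the $tq^h,\dots,tq^{2h}$ of $\L^{(h)}$ in $\T^{*}_n$, and the wavy level weight $ytq^{h},\dots,ytq^{2h-1}$ yields $tq^h,\dots,tq^{2h-1}$ for $\W^{(h)}$; moreover both $\G_n\subset\M^{*}_n$ and $\T^{*}_n$ forbid wavy level steps on the $x$-axis. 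With these identifications, the stripped weight $\rho'$ on $\G_n$ coincides monomial-for-monomial with $\rho$ on $\T^{*}_n$.

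Finally I would invoke Theorem \ref{thm:Flajolet}: since the admissible weights at each height agree after removing the uniform $y^n$ factor, the generating functions $\sum_n \rho'(\G_n)x^n$ and $\sum_n \rho(\T^{*}_n)x^n$ have identical continued-fraction expansions and hence are equal, giving $\rho'(\G_n)=\rho(\T^{*}_n)$. Combining with $\rho(\G_n)=y^n\rho'(\G_n)$ and Proposition \ref{pro:T*n-weighting}, which states $\sum_n \rho(\T^{*}_n)x^n=\sum_n Q_n(t,q)x^n$, we conclude $\rho(\G_n)=y^n Q_n(t,q)$, as claimed.

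The step I expect to require the most care is the redistribution of weights for the matching pairs in the $t$-branch: one must verify that the pair weight $(ytq^{h+a},ytq^{h+1+b})$ with $a,b\in\{0,\dots,h\}$, whose product in $q$ ranges over exponents $2h+1$ through $4h+1$ and whose total $y$-power is $2$ and $t$-power is $2$, can be legitimately re-expressed as a single up-step weight $t^2q^{2h+1+c}$ with $c\in\{0,\dots,2h\}$ times a down-step weight $q^d$ with $d\in\{0,\dots,h\}$ so as to reproduce exactly the two ranges $\{t^2q^{2h+1},\dots,t^2q^{3h+1}\}$ and $\{1,\dots,q^h\}$ of $\T^{*}_n$. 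This is the type of bookkeeping that is asserted to work ``by the same argument'' as in Lemma \ref{lem:Fn=y^nRn}, so the main obstacle is confirming the index ranges align rather than any conceptual difficulty.
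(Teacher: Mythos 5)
Your proposal is correct and follows essentially the same route as the paper: the paper proves this lemma by noting the uniform $y^n$ factor (each matching pair contributes $y^2$, each level step contributes $y$) and then invoking Theorem \ref{thm:Flajolet} to identify the generating functions of $\rho(\G_n)$ and $y^n\rho(\T^*_n)$, exactly as you do, just with less explicit bookkeeping of the weight ranges. Your careful check that the pair weights $(ytq^{h+a},ytq^{h+1+b})$ reproduce the product polynomial $(1+t^2q^{2h+1})[h+1]_q^2$ of $\T^*_n$ is the detail the paper leaves implicit under ``the same argument as in the proof of Lemma \ref{lem:Fn=y^nRn}.''
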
	
	
We describe the construction of the map $\Psi_2:\M^{*}_n\rightarrow\M^{*}_n$.
	
\smallskip	
\noindent
{\bf Algorithm  B}

Given a path $\mu\in\M^{*}_n$, the corresponding path $\Psi_2(\mu)$ is constructed by the following procedure.
\begin{enumerate}
	\item[(B1)] If $\mu$ contains no straight level steps $\L^{(h)}$ with weight $y^2q^a$ or wavy level steps $\W^{(h)}$ with weight $q^{a-1}$ for any $a\in\{1,2,\dots, h\}$ then go to (B2). Otherwise, among such level steps find the first step $z$, say $z=\L^{(h)}$ ($\W^{(h)}$, respectively) with weight $\rho(z)=y^2q^a$ ($q^{a-1}$, respectively), then the path $\Psi_2(\mu)$ is obtained by replacing $z$ by $\W^{(h)}$ ($\L^{(h)}$, respectively) with weight $q^{a-1}$ ($y^2q^a$, respectively).
	\item[(B2)]  If $\mu$ contains no matching pairs $(\U^{(h)},\D^{(h+1)})$ with $\rho(\U^{(h)},\D^{(h+1)})=(y^2q^a, ytq^{h+1+b})$ or $(ytq^{h+a}, q^{b})$ for any $a,b\in\{0,1,\dots, h\}$ then go to (B3). Otherwise, among such pairs find the first pair with weight, say $(y^2q^a, ytq^{h+1+b})$ ($(ytq^{h+a}, q^{b})$, respectively) then the corresponding path is obtained by replacing the weight of the pair by $(ytq^{h+a}, q^{b})$ ($(y^2q^a, ytq^{h+1+b})$, respectively).
	\item[(B3)] Then we have $\mu\in\G_n$. Let $\Psi_2(\mu)=\mu$.
	\end{enumerate}

\smallskip
By the possible weighted steps of the paths in $\M^{*}_n$, we have the following result.
	
\begin{pro} \label{pro:map-psi2} The map $\Psi_2$ established by Algorithm B is an involution on the set $\M^{*}_n$ such that for any path $\mu\in\M^{*}_n$,  $\Psi_2(\mu)=\mu$ if $\mu\in\G_n$, and $\rho(\Psi_2(\mu))=y^2q\rho(\mu)$ or $y^{-2}q^{-1}\rho(\mu)$ otherwise.
\end{pro}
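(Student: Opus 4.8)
The plan is to verify the three assertions of the proposition---that $\Psi_2$ is a well-defined self-map of $\M^{*}_n$, that its fixed-point set is exactly $\G_n$, and that it is an involution changing weights by a factor $y^{\pm2}q^{\pm1}$---by direct inspection of the swap rules in Algorithm B, in complete parallel with the proof of Proposition \ref{pro:map-psi1}.

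First I would check that $\Psi_2$ maps $\M^{*}_n$ into itself. Both (B1) and (B2) leave the underlying Motzkin shape intact: step (B1) only recolors a level step between the types $\L^{(h)}$ and $\W^{(h)}$, while step (B2) only reassigns the two weights within a fixed matching pair $(\U^{(h)},\D^{(h+1)})$. So it suffices to confirm that the new weights remain admissible. For (B1), a straight step of weight $y^2q^a$ with $a\in\{1,\dots,h\}$ is sent to a wavy step of weight $q^{a-1}\in\{1,\dots,q^{h-1}\}$ and conversely, both admissible; the restriction $a\ge1$ is exactly what guarantees that no straight level step of weight $y^2$ is ever created, so the image stays in $\M^{*}_n$. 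For (B2), the admissible weights $y^2q^a$, $ytq^{h+a}$ for $\U^{(h)}$ and $q^b$, $ytq^{h+1+b}$ for $\D^{(h+1)}$ all lie in the prescribed ranges, and the swap $(y^2q^a,ytq^{h+1+b})\leftrightarrow(ytq^{h+a},q^b)$ interchanges admissible weights.

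Next I would identify the fixed points. A path is untouched precisely when neither (B1) nor (B2) applies. The failure of (B1) forces every straight level step into the form $ytq^{h+j}$ and every wavy level step into the form $ytq^{h+j}$ in the appropriate ranges, since in $\M^{*}_n$ the only remaining possibilities are the $y^2q^a$ ($a\ge1$) and the $q^{a-1}$ that (B1) would otherwise have moved; the failure of (B2) forces every matching pair to be of type $(y^2q^a,q^b)$ or $(ytq^{h+a},ytq^{h+1+b})$, the mixed types being precisely those (B2) would have moved. Matching these conditions against Definition \ref{def:Gn} shows that the fixed-point set is exactly $\G_n$.

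The involution property then follows from the \emph{first-flagged-object} design. When (B1) acts, it swaps the first level step of the flagged forms; the swapped step is again of a flagged form and every earlier step is unchanged, so in $\Psi_2(\mu)$ the same step is again the first flagged one and a second application restores $\mu$. The identical argument applies to (B2) acting on the first flagged matching pair. The key point preventing interference between the branches is that a (B2) swap alters only $\U$/$\D$ weights and hence creates no new (B1)-qualifying level step, so a path processed by (B2) is again processed by (B2). Finally the weight ratios, $y^2q^a/q^{a-1}=y^2q$ in (B1) and $(y^2q^a\cdot ytq^{h+1+b})/(ytq^{h+a}\cdot q^b)=y^2q$ in (B2), give $\rho(\Psi_2(\mu))=y^2q\,\rho(\mu)$ or $y^{-2}q^{-1}\rho(\mu)$ according to the direction of the swap. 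The one delicate point is the bookkeeping that assembles the two failure conditions exactly into $\G_n$ and confirms that the first flagged object is preserved under a swap; once these are pinned down the remainder is routine.
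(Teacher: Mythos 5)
Your proposal is correct and follows exactly the route the paper intends: the paper states Proposition \ref{pro:map-psi2} as an immediate consequence of inspecting the admissible weighted steps of $\M^{*}_n$, and your verification of well-definedness, the fixed-point characterization against Definition \ref{def:Gn}, the first-flagged-object involution argument, and the weight ratio $y^2q$ in both branches (B1) and (B2) is precisely the routine bookkeeping being elided. No gaps.
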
	

Now we are ready to prove Theorem \ref{thm:signFwexD}.

\smallskip
\noindent
\emph{Proof of Theorem \ref{thm:signFwexD}.} (i) To compute the expression $\sum_{\sigma\in B^{*}_n} \big(-\frac{1}{q}\big)^{\lfloor \frac{\fwex(\sigma)}{2}\rfloor} t^{\nega(\sigma)}q^{\cro_B(\sigma)}$, by Eq.\,(\ref{eqn:expression-B*n}) we consider the polynomial $\rho(\M^{*}_n)$ evaluated at $y=\sqrt{\frac{-1}{q}}$. By Proposition \ref{pro:map-psi2} and Lemma \ref{lem:Gn=y^nQn}, we have
\begin{align*}
\rho(\M^{*}_n)\big|_{y=\sqrt{\frac{-1}{q}}}
            &= \rho(\G_n)\big|_{y=\sqrt{\frac{-1}{q}}}  \\
            &= y^{n} Q_{n}(t,q)\big|_{y=\sqrt{\frac{-1}{q}}}.
\end{align*}
Hence we have
\begin{equation} \label{eqn:evaluation-Bn*}
\sum_{\sigma\in B^{*}_n}\left(-\frac{1}{q}\right)^{\lfloor \frac{\fwex(\sigma)}{2}\rfloor} t^{\nega(\sigma)}q^{\cro_B(\sigma)}=
                \left(-\frac{1}{q}\right)^{\lfloor\frac{n}{2}\rfloor} Q_n(t,q).
\end{equation} 	

(ii) Recall that $D^{*}_n\subset B^{*}_n$ is the subset consisting of the fixed point-free signed permutations with even number of negative entries.
Let $\M^{*'}_n\subset \M^{*}_n$ be the subset consisting of the paths whose weights contain even powers of $t$.
Notice that the bijection $\Gamma: B_n\rightarrow\M_n$ also induces a bijection between $D^{*}_n$ and $\M^{*'}_n$ such that
\begin{equation} \label{eqn:D*n-M*'n}
\sum_{\sigma\in D^{*}_n} y^{\fwex(\sigma)}t^{\nega(\sigma)}q^{\cro_B(\sigma)} = \rho(\M^{*'}_n).
\end{equation}

Consider the polynomial $\rho(\M^{*'}_n)$ evaluated at $y=\sqrt{\frac{-1}{q}}$.
By Proposition \ref{pro:map-psi2}, we observe that the involution $\Psi_2:\M^{*}_n\rightarrow\M^{*}_n$ preserves the powers of $t$ of the weight of the paths. Moreover, for the fixed points $\mu\in\G_n$ of the map $\Psi_2$, we observe that the power of $t$ of $\rho(\mu)$ has the same parity of $n$. By the expression in Eq.\,(\ref{eqn:evaluation-Bn*}), we have
\[
\sum_{\sigma\in D^{*}_n}\left(-\frac{1}{q}\right)^{\lfloor \frac{\fwex(\sigma)}{2}\rfloor} t^{\nega(\sigma)}q^{\cro_B(\sigma)}=\left\{
               \begin{array}{ll}
                \big(-\frac{1}{q}\big)^{\frac{n}{2}} Q_n(t,q) &\mbox{if $n$ is even,} \\
                0 &\mbox{if $n$ is odd.}
               \end{array}
               \right.
\] 	
The proof of Theorem \ref{thm:signFwexD} is completed.

\section{Refined enumerators for variants of the snakes}
In this section we shall give a combinatorial interpretation for the polynomials $Q_n(t,q)$ and $R_n(t,q)$ as the enumerators for variants of the snakes in signed permutations. We make use of a non-recursive approach, based on Flajolet's combinatorics of continued fractions, to establish a bijection between the snakes of $\SS^{0}_n$ ($\SS^{00}_{n+1}$, respectively) and the weighted bicolored Motzkin paths of $\T^{*}_n$ given in Proposition \ref{pro:T*n-weighting} ($\T_n$ given in Proposition \ref{pro:Tn-weighting}, respectively). The bijections are constructed in
the spirit of the classical Fran\c con--Viennot bijection \cite{FV} by encoding the sign changes and consecutive up-down patterns of snakes into weighted steps of the Motzkin paths.

Given a permutation $\pi=\pi_1\pi_2\cdots\pi_n\in\mathfrak{S}_n$, Arnol'd \cite{Arnold} devised the following rules to determine signs $\epsilon=\epsilon_1\epsilon_2\cdots\epsilon_n\in\{\pm\}^n$ such that $(\epsilon_1\pi_1,\epsilon_2\pi_2,\dots,\epsilon_n\pi_n)$ is a snake; see also \cite{JV}. For $2\le i\le n-1$,
\begin{itemize}
	\item[(R1)] if $\pi_{i-1}<\pi_i<\pi_{i+1}$ then $\epsilon_i\ne \epsilon_{i+1}$,
	\item[(R2)] if $\pi_{i-1}>\pi_i>\pi_{i+1}$ then $\epsilon_{i-1}\ne \epsilon_{i}$,
	\item[(R3)] if $\pi_{i-1}>\pi_i<\pi_{i+1}$ then $\epsilon_{i-1}= \epsilon_{i+1}$.
\end{itemize}

For a snake $\sigma=\sigma_1\sigma_2\dots\sigma_n\in \SS^{0}_n$ ($\SS^{00}_n$, respectively), we make the convention that $\sigma_0=0$ and $\sigma_{n+1}=(-1)^{n}(n+1)$ ($\sigma_0=\sigma_{n+1}=0$, respectively). Let $|\sigma|$ denote the permutation obtained from $\sigma$ by removing the negative signs of $\sigma$, i.e., $|\sigma|_i=|\sigma_i|$ for $0\le i\le n+1$.

Recall that the number of sign changes of $\sigma$ is given by
$\cs(\sigma):=\#\{i: \sigma_i\sigma_{i+1}<0, 0\le i\le n\}$.
For $1\le j\le n$, let $j=|\sigma|_i$ for some $i\in [n]$, and define $\cs(\sigma,j)$, the number of sign changes recorded by the element $j$, by
\[
\cs(\sigma,j)=\left\{
             \begin{array}{ll}
             0              &\mbox{if $|\sigma|_{i-1}>j<|\sigma|_{i+1}$ and $\sigma_{i-1}\sigma_i>0, \sigma_i\sigma_{i+1}>0$,} \\
             2              &\mbox{if $|\sigma|_{i-1}>j<|\sigma|_{i+1}$ and $\sigma_{i-1}\sigma_i<0, \sigma_i\sigma_{i+1}<0$,} \\
             1              &\mbox{if $|\sigma|_{i-1}<j<|\sigma|_{i+1}$ or $|\sigma|_{i-1}>j>|\sigma|_{i+1}$,} \\
             0              &\mbox{if $|\sigma|_{i-1}<j>|\sigma|_{i+1}$.}
             \end{array}
       \right.
\]
We call the sequence $(\cs(\sigma,1),\dots,\cs(\sigma,n))$ the $\cs$-\emph{vector} of $\sigma$.

Following the rules (R1)-(R3) and the condition $\sigma_1>0$, we observe that the signs of the entries $\sigma_1,\dots,\sigma_n$ of $\sigma$ can be recovered from left to right by $|\sigma|$ and the $\cs$-vector of $\sigma$.

\begin{lem} \label{lem:sign-changes} For any snake $\sigma\in\SS^{0}_n$ ($\SS^{00}_n$, respectively), the sign of each entry of $\sigma$ is uniquely determined by $|\sigma|$ and the vector $(\cs(\sigma,1), \dots, \cs(\sigma,n))$. Moreover, we have
\[
\cs(\sigma)=\sum_{j=1}^{n} \cs(\sigma,j).
\]
\end{lem}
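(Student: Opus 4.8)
The plan is to translate both assertions into a bookkeeping of the $n+1$ \emph{bonds} $(\sigma_i,\sigma_{i+1})$, $0\le i\le n$, declaring bond $i$ a \emph{sign change} when $\sigma_i\sigma_{i+1}<0$; thus $\cs(\sigma)$ is precisely the number of sign-change bonds. First I would classify each position $i\in[n]$ by the up--down shape of $|\sigma|$ there, calling $i$ a \emph{valley}, \emph{peak}, \emph{double rise} or \emph{double fall} according as $|\sigma|_{i-1},|\sigma|_i,|\sigma|_{i+1}$ reads down-up, up-down, up-up or down-down (the boundary values $|\sigma|_0,|\sigma|_{n+1}$ being supplied by the stated conventions). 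Reading the definition of $\cs(\sigma,j)$ with $j=|\sigma|_i$, this shape classification is exactly what selects the four branches: valleys give $0$ or $2$, double rises and double falls give $1$, and peaks give $0$.

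For the identity $\cs(\sigma)=\sum_j\cs(\sigma,j)$ I would run a charging argument. I charge the value $\cs(\sigma,|\sigma|_i)$ to adjacent bonds as follows: a valley charges both bonds $i-1$ and $i$; a double rise charges the right bond $i$; a double fall charges the left bond $i-1$; a peak charges nothing. Because $|\sigma|$ is injective, for every interior bond $(i,i+1)$ exactly one endpoint charges it --- the left endpoint (position $i$) when $|\sigma|_i<|\sigma|_{i+1}$, the right endpoint (position $i+1$) when $|\sigma|_i>|\sigma|_{i+1}$ --- so the charging partitions the interior bonds. It then remains to check, bond by bond, that the charged quantity equals the number of sign changes on the charged bond(s): Arnol'd's rule (R1) forces the right bond of a double rise to be a sign change, (R2) forces the left bond of a double fall to be one, and (R3), giving $\epsilon_{i-1}=\epsilon_{i+1}$ at a valley, forces the two bonds of a valley to share the same status, which is exactly why $\cs(\sigma,|\sigma|_i)\in\{0,2\}$ correctly records their combined count. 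Summing over all positions then yields $\sum_j\cs(\sigma,j)=\cs(\sigma)$, once the two boundary bonds are accounted for.

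The boundary bonds are where the two families diverge and must be treated separately. Since $\sigma_0=0$ in both cases, bond $0$ is never a sign change and is left uncharged (position $1$ has $|\sigma|_0<|\sigma|_1$, so it does not charge its left bond), which is harmless. At the right end, for $\SS^{00}_n$ the convention $\sigma_{n+1}=0$ makes bond $n$ a non-change that is again uncharged, whereas for $\SS^{0}_n$ the convention $\sigma_{n+1}=(-1)^n(n+1)$ makes bond $n$ a genuine bond that position $n$ charges, the rules extended through this virtual entry still forcing its contribution to match. For the first assertion I would run the charging in reverse: the shape of $|\sigma|$ already fixes the status of every bond charged by a double rise, double fall or peak, while the valley entries of the $\cs$-vector (each $0$ or $2$) supply exactly the remaining information, namely whether each valley is a double sign change. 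Hence $|\sigma|$ together with the $\cs$-vector determines the sign-change status of every bond, and since $\sigma_1>0$ holds throughout $\SS^{0}_n\supseteq\SS^{00}_n$, propagating $\epsilon_1=+$ from left to right --- flipping the sign across each sign-change bond and keeping it otherwise --- recovers all signs uniquely.

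The main obstacle is the verification behind the second paragraph: showing that the charging is a genuine partition and that at each charged bond the value assigned by the definition coincides with the true sign-change count. The valley case is the delicate one, since there a single element must account for two bonds at once, and its correctness hinges entirely on (R3); combined with the asymmetric right-boundary conventions for $\SS^{0}_n$ and $\SS^{00}_n$, this is where the bookkeeping must be carried out with care. Everything else reduces to a routine case check of the four shape-branches against rules (R1)--(R3).
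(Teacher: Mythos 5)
Your charging argument is the paper's own proof in different clothing: the paper runs a left-to-right induction determining the sign of $\sigma_i$ from that of $\sigma_{i-1}$, and its two cases (descent versus ascent of $|\sigma|$ across a bond) assign each bond to exactly the endpoint your scheme charges it to, with (R1)/(R2) handling double rises and falls and (R3) handling valleys. The proposal is correct and takes essentially the same approach, with somewhat more explicit attention to the two boundary bonds than the paper gives.
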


\begin{proof} For the initial condition, we have $\sigma_0=0$ and $\sigma_1>0$. For $i\ge 2$, we determine the sign of $\sigma_i$ according to the following cases:

Case 1. $|\sigma|_{i-1}>|\sigma|_i$. There are two cases. If $|\sigma|_i>|\sigma|_{i+1}$ then by (R2) $\sigma_i$ has the opposite sign of $\sigma_{i-1}$.
Otherwise, $|\sigma|_i<|\sigma|_{i+1}$, and by (R3) $\sigma_i$ has the opposite (same, respectively) sign of $\sigma_{i-1}$ if $\cs(|\sigma|,|\sigma|_i)=2$ (0, respectively). Hence the sign change between $\sigma_{i-1}$ and $\sigma_i$ is recorded by $\cs(|\sigma|,|\sigma|_i)$.

Case 2. $|\sigma|_{i-1}<|\sigma|_i$. There are two cases. If $|\sigma|_{i-2}<|\sigma|_{i-1}$ then by (R1) $\sigma_i$ has the opposite sign of $\sigma_{i-1}$.
Otherwise, $|\sigma|_{i-2}>|\sigma|_{i-1}$, and by (R3) $\sigma_i$ has the opposite (same, respectively) sign of $\sigma_{i-1}$ if $\cs(|\sigma|,|\sigma|_{i-1})=2$ (0, respectively). Hence the sign change between $\sigma_{i-1}$ and $\sigma_i$ is recorded by $\cs(|\sigma|,|\sigma|_{i-1})$.

The assertions follow.
\end{proof}

\smallskip
\begin{exa} \label{exa:sign-snake} {\rm Given a snake $\sigma=((0),5,-2,4,-7,-1,-8,10,-9,6,3,(11))\in\SS^{0}_{10}$, note that $\cs(\sigma)=6$ and the $\cs$-vector of $\sigma$ is $(0,2,0,1,0,1,0,1,1,0)$.

On the other hand, given the permutation $|\sigma|=(5,2,4,7,1,8,10,9,6,3)\in\mathfrak{S}_{10}$ and the $\cs$-vector $(0,2,0,1,0,1,0,1,1,0)$, we observe that the snake $\sigma$ can be recovered, following the rules (R1)-(R3).
}
\end{exa}

In the following we shall encode the permutation $|\sigma|$ by a weighted bicolored Motzkin path.
Let $\mathfrak{S}^{0}_n$ and $\mathfrak{S}^{00}_n$ denote two `copies' of $\mathfrak{S}_n$ with the following convention
\begin{itemize}
\item $\pi_0=0$ and $\pi_{n+1}=n+1$ if $\pi\in\mathfrak{S}^{0}_n$,
\item $\pi_0=\pi_{n+1}=0$ if $\pi\in\mathfrak{S}^{00}_n$.
\end{itemize}

Given a permutation $\pi=\pi_1\pi_2\cdots\pi_n\in\mathfrak{S}^{0}_n$ or $\mathfrak{S}^{00}_n$, by a \emph{block} of $\pi$ restricted to $\{0,1,\dots,k\}$ we mean a maximal sequence of consecutive entries $\pi_i\pi_{i+1}\cdots\pi_j\subseteq\{0,1,\dots,k\}$ for some $i\le j$. For $0\le k\le n$, let $\alpha(\pi,k)$ be the number of blocks of $\pi$ restricted to $\{0,1\dots,k\}$, and let $\beta(\pi,k)$ be the number of such blocks on the right-hand side of the block containing the element $k$. By the convention on $\sigma_0$ and $\sigma_{n+1}$, for $k=0$ we have $\alpha(\pi,0)=1$ if $\pi\in\mathfrak{S}^{0}_n$, while $\alpha(\pi,0)=2$ if $\pi\in\mathfrak{S}^{00}_n$.

\begin{exa} \label{exa:alpha-beta-0} {\rm
Let $\pi=((0),5,2,4,7,1,8,10,9,6,3,(11))\in\mathfrak{S}^{0}_{10}$. Notice that $\alpha(\pi,6)=3$ and $\beta(\pi,6)=0$. The three blocks of $\pi$ restricted to $\{0,1,\dots,6\}$ are underlined as shown below.
\[
\begin{tabular}{cccccccccccc}
    (0) & 5 & 2 & 4 & 7 & 1 & 8 & 10 & 9 & 6 & 3 & (11) \\
    \cline{1-4} \cline{6-6}  \cline{10-11} \\

\end{tabular}
\]
For $0\le k\le 10$, the sequences of $\alpha(\pi,k)$ and $\beta(\pi,k)$ of $\pi$ are shown in Table \ref{tab:block-vector}.
}
\end{exa}

\begin{table}[ht]
\caption{The sequences $\alpha,\beta$ of $\pi=((0),5,2,4,7,1,8,10,9,6,3,(11))$.}
\begin{tabular}{c|ccccccccccc}
    $k$           & 0 & 1 & 2 & 3 & 4 & 5 & 6 & 7 & 8 & 9 & 10 \\
    \hline
  $\alpha(\pi,k)$ & 1 & 2 & 3 & 4 & 4 & 3 & 3 & 2 & 2 & 2 & 1 \\
   $\beta(\pi,k)$ & 0 & 0 & 1 & 0 & 2 & 2 & 0 & 1 & 1 & 0 & 0
\end{tabular}
\label{tab:block-vector}
\end{table}

\subsection{The enumerator $Q_n(t,q)$ of $\SS^{0}_n$.}
We shall establish a map $\Lambda_1:\SS^{0}_n\rightarrow\T^{*}_n$ by the following procedure.

\smallskip
\noindent
{\bf Algorithm C.}

Given a snake $\sigma=\sigma_1\sigma_2\cdots\sigma_n\in\SS^{0}_n$, we associate $\sigma$ with a weighted path $\Lambda_1(\sigma)=z_1z_2\cdots z_n$.
For $1\le j\le n$, let $j=|\sigma|_i$ for some $i\in [n]$ and define the step $z_j$ according to the following cases:
\begin{enumerate}
\item if $|\sigma|_{i-1}>j<|\sigma|_{i+1}$ then $z_j=\U$ with weight
\[
\rho(z_j)=\left\{ \begin{array}{ll}
                   q^{\beta(|\sigma|,j)} &\mbox{ if $\sigma_{i-1}\sigma_i>0$ and $\sigma_i\sigma_{i+1}>0$,} \\
                   t^2q^{\beta(|\sigma|,j)+2\alpha(|\sigma|,j)-3} &\mbox{ if $\sigma_{i-1}\sigma_i<0$ and $\sigma_i\sigma_{i+1}<0$,}
                  \end{array}
          \right.
\]
\item if $|\sigma|_{i-1}<j<|\sigma|_{i+1}$ then $z_j=\L$ with weight $tq^{\beta(|\sigma|,j)+\alpha({|\sigma|,j})-1}$,
\item if $|\sigma|_{i-1}>j>|\sigma|_{i+1}$ then $z_j=\W$ with weight $tq^{\beta(|\sigma|,j)+\alpha({|\sigma|,j})-1}$,
\item if $|\sigma|_{i-1}<j>|\sigma|_{i+1}$ then $z_j=\D$ with weight $q^{\beta(|\sigma|,j)}$.
\end{enumerate}

\smallskip
Notice that the value $\cs(\sigma,j)$ is encoded as the power of $t$ in $\rho(z_j)$. For convenience, the element $j$ in (i) is called a \emph{valley}, in (ii) a \emph{double ascent}, in (iii) a \emph{double descent}, and in (iv) a \emph{peak} of $\sigma$.

\smallskip
\begin{exa} \label{exa:snake-0} {\rm
Take the snake $\sigma=((0),5,-2,4,-7,-1,-8,10,-9,6,3,(11))\in\SS^{0}_{10}$. The $\cs$-vector of $\sigma$ is given in Example \ref{exa:sign-snake} and the sequences  $\alpha$, $\beta$ of $|\sigma|$ are given in Example \ref{exa:alpha-beta-0}. We observe that $z_1=\U$ with weight $q^{\beta(|\sigma|,1)}=1$ since the element 1 is a valley without sign-changes, and that $z_2=\U$ with weight $t^2q^{\beta(|\sigma|,2)+2\alpha(|\sigma|,2)-3}=t^2q^4$ since the element 2 is a valley with sign-changes. The path $\Lambda_1(\sigma)$ is shown in Figure \ref{fig:snake-path}.
}
\end{exa}

\begin{figure}[ht]
\begin{center}
\psfrag{1}[][][0.95]{$1$}
\psfrag{t2q4}[][][0.95]{$t^2q^4$}
\psfrag{tq5}[][][0.95]{$tq^5$}
\psfrag{q2}[][][0.95]{$q^2$}
\psfrag{tq2}[][][0.95]{$tq^2$}
\psfrag{q}[][][0.95]{$q$}
\psfrag{tq}[][][0.95]{$tq$}
\includegraphics[width=3.2in]{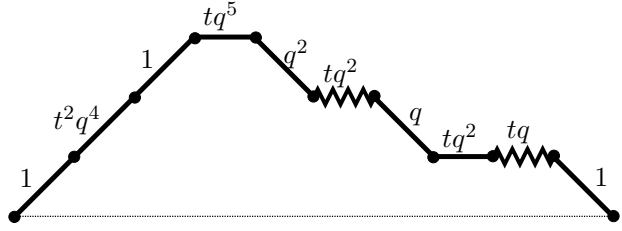}
\end{center}
\caption{\small The corresponding path of the snake in Example \ref{exa:snake-0}.}
\label{fig:snake-path}
\end{figure}

The constraints for the parameters $\alpha(|\sigma|,k)$ and $\beta(|\sigma|,k)$ of $|\sigma|$ are encoded in the height of the step $z_k\in\Lambda_1(\sigma)$.

\begin{lem} \label{lem:heights} For a snake $\sigma=\sigma_1\sigma_2\cdots\sigma_n\in\SS^{0}_n$, let $\Lambda_1(\sigma)=z_1z_2\cdots z_n$ be the path constructed by Algorithm C. For $1\le j\le n$, let $h_j$ be the height of the step $z_j$.  Then the following properties hold.
\begin{enumerate}
\item $h_j=\alpha(|\sigma|,j-1)-1$.
\item If $z_j=\W$ or $\D$ then $h_j\ge 1$ and $0\le \beta(|\sigma|,j)\le h_j-1$.
\item If $z_j=\U$ or $\L$ then $0\le \beta(|\sigma|,j)\le h_j$.
\end{enumerate}
\end{lem}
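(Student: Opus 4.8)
The plan is to prove the three statements in Lemma~\ref{lem:heights} by tracking how the quantities $\alpha(|\sigma|,k)$ and $\beta(|\sigma|,k)$ evolve as $k$ increases, and to relate this evolution to the up/down/level structure of the path $\Lambda_1(\sigma)$ built by Algorithm~C. The key observation is that inserting the element $k$ into the block structure restricted to $\{0,1,\dots,k-1\}$ changes the number of blocks $\alpha(|\sigma|,k)$ in a way dictated precisely by whether $k$ is a valley, double ascent, double descent, or peak of $\sigma$, which in turn determines the step type $z_k$. So the heart of the matter is a bookkeeping argument on block merging and splitting.

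First I would prove~(i), namely $h_j=\alpha(|\sigma|,j-1)-1$. Since $h_j$ is the height of $z_j$, i.e. the $y$-coordinate at which the step $z_j$ begins, and heights are determined by the running sum of $+1$ for each up step and $-1$ for each down step preceding $z_j$, it suffices to show that $\alpha(|\sigma|,k)-1$ equals this running height after the steps $z_1,\dots,z_k$ are placed; equivalently, that reading the elements $1,2,\dots,n$ in increasing value, a valley (step $\U$) raises $\alpha$ by $1$, a peak (step $\D$) lowers $\alpha$ by $1$, and a double ascent or double descent (step $\L$ or $\W$) leaves $\alpha$ unchanged. I would verify this directly from the block definition: when $j=|\sigma|_i$ is inserted, the effect on the count of blocks restricted to $\{0,\dots,j\}$ depends on how many of its immediate neighbors $|\sigma|_{i-1}$ and $|\sigma|_{i+1}$ already lie below $j$. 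If both neighbors exceed $j$ (a valley), $j$ starts a brand-new isolated block, so $\alpha$ increases by one; if both neighbors are smaller (a peak), $j$ merges two previously separate blocks into one, so $\alpha$ decreases by one; and in the two mixed cases ($j$ a double ascent or double descent) $j$ attaches to exactly one existing block, leaving the block count fixed. Matching this against the step assignments in Algorithm~C and using the base case (either $\alpha(|\sigma|,0)=1$ in $\SS^0_n$, giving $h_1=0$, consistent with $z_1$ starting on the $x$-axis) establishes~(i), and along the way confirms that $\Lambda_1(\sigma)$ is a genuine Motzkin path returning to height~$0$.

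Next I would address the bounds on $\beta(|\sigma|,j)$ in~(ii) and~(iii). The quantity $\beta(|\sigma|,j)$ counts the blocks of $|\sigma|$ restricted to $\{0,1,\dots,j\}$ that lie strictly to the right of the block containing $j$, so trivially $0\le\beta(|\sigma|,j)\le\alpha(|\sigma|,j)-1$. The task is to convert this into a statement about $h_j$. Using~(i), I would express $\alpha(|\sigma|,j)$ in terms of $\alpha(|\sigma|,j-1)=h_j+1$ together with the $\pm1$ or $0$ increment computed above. When $z_j$ is a peak ($\D$), the insertion of $j$ decreases the block count, so $\alpha(|\sigma|,j)=h_j$ and one must also note that a peak can only occur when $j$ has two smaller neighbors, forcing $h_j\ge1$; then $\beta(|\sigma|,j)\le\alpha(|\sigma|,j)-1=h_j-1$. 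The $\W$ (double descent) case leaves $\alpha$ unchanged at $h_j+1$, but here the block containing $j$ is never the rightmost one available for the relevant comparison, which trims the upper bound to $h_j-1$ and again forces $h_j\ge1$; this asymmetry between $\W$/$\D$ and $\U$/$\L$ is exactly what the lemma records. For the $\U$ (valley) and $\L$ (double ascent) cases, $j$ either creates a new block or sits at the left end of an existing block, so the block containing $j$ may be as far left as possible and $\beta(|\sigma|,j)$ can range up to $h_j$, giving the weaker bound in~(iii). The main obstacle, and the step demanding the most care, will be the $\W$ case: verifying that a double descent genuinely reduces the admissible range of $\beta$ by one relative to the naive bound requires arguing that the block immediately to the right of $j$'s block is always present, so that $\beta(|\sigma|,j)$ cannot attain $\alpha(|\sigma|,j)-1=h_j$. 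Once this combinatorial claim about neighboring blocks is pinned down, the remaining verifications are routine, and the lemma follows.
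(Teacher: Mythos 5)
Your argument for part (i) is the same induction on the block count that the paper uses, and it is correct. The trouble lies in how you convert the trivial inequality $0\le\beta(|\sigma|,j)\le\alpha(|\sigma|,j)-1$ into the bounds of (ii) and (iii). First, for a valley ($z_j=\U$) one has $\alpha(|\sigma|,j)=\alpha(|\sigma|,j-1)+1=h_j+2$, so the trivial bound only gives $\beta(|\sigma|,j)\le h_j+1$, not the required $\beta(|\sigma|,j)\le h_j$; you assert the bound $h_j$ for this case without supplying the missing step. Second, and more seriously, your proposed repair for the double-descent case ($z_j=\W$) is wrong both in direction and in substance. You claim that ``the block immediately to the right of $j$'s block is always present,'' but the presence of a block to the \emph{right} would only bound $\beta$ from \emph{below}; the equality $\beta(|\sigma|,j)=\alpha(|\sigma|,j)-1$ occurs precisely when $j$'s block is the \emph{leftmost} one, so what must be excluded is that possibility, i.e.\ one needs a block to the \emph{left}. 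Moreover the claim itself is false: for the snake $\sigma=(3,-2,1)\in\SS^{0}_3$, with $|\sigma|=((0),3,2,1,(4))$, the element $2$ is a double descent, and in the restriction to $\{0,1,2\}$ its block $(2,1)$ is the rightmost block, with $\beta(|\sigma|,2)=0$ and $h_2=1$.

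The single observation that repairs both cases --- and is what the paper's proof actually uses --- is that whenever $j>|\sigma|_{i+1}$ or $j$ is a valley, the block containing the boundary element $\sigma_0=0$ lies strictly to the left of the block relevant to $\beta(|\sigma|,j)$. For $z_j\in\{\U,\W\}$ one has $|\sigma|_{i-1}>j$, which forces $i\ge 2$ and excludes position $i-1$ from the restriction, so the block through position $0$ terminates before $j$'s block begins; this gives $\beta(|\sigma|,j)\le\alpha(|\sigma|,j)-2$, which equals $h_j$ for $\U$ and $h_j-1$ for $\W$, and it also yields $\alpha(|\sigma|,j-1)\ge 2$, hence $h_j\ge 1$, for $\W$. (Your treatment of $\D$ via $\alpha(|\sigma|,j)=h_j$ is fine, as is $\L$ via the trivial bound.) Until the $\U$ and $\W$ cases are argued along these lines, the proof of (ii) and (iii) is not complete.
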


\begin{proof} For the initial condition, we have $\alpha(|\sigma|,0)=1$ and $h_1=0$. The first step is either $\U$ or $\L$ since the element 1 is either a valley or a double ascent. For $j\ge 1$, let $j=|\sigma|_i$ for some $i\in [n]$. By induction, we determine the height of $z_{j+1}$ according to the following cases:
\begin{itemize}
\item If $|\sigma|_{i-1}>j<|\sigma|_{i+1}$ then $z_j=\U$ and the element $j$ itself creates a block of $|\sigma|$ restricted to $\{0,1,\dots,j\}$. Hence $h_{j+1}=h_{j}+1=\alpha(|\sigma|,j-1)=\alpha(|\sigma|,j)-1$.
\item If $|\sigma|_{i-1}<j<|\sigma|_{i+1}$ or $|\sigma|_{i-1}>j>|\sigma|_{i+1}$ then $z_j=\L$  ($\W$, respectively) and the element $j$ is added to the block with $|\sigma|_{i-1}$ ($|\sigma|_{i+1}$, respectively). Hence $h_{j+1}=h_{j}=\alpha(|\sigma|,j-1)-1=\alpha(|\sigma|,j)-1$.
\item If $|\sigma|_{i-1}<j>|\sigma|_{i+1}$ then $z_j=\D$ and the element $j$ connects the adjacent two blocks. Hence $h_{j+1}=h_{j}-1=\alpha(|\sigma|,j-1)-2=\alpha(|\sigma|,j)-1$.
\end{itemize}
The assertion (i) follows.

(ii) If $z_j=\W$ or $\D$ then $j>|\sigma|_{i+1}$. The element $j$ is added to the block with $|\sigma|_{i+1}$, which is different from the block containing $(0)$. Then $\alpha(|\sigma|,j-1)\ge 2$ and hence $h_j\ge 1$. Moreover, there are at most $\alpha(|\sigma|,j-1)-2$ blocks on the right-hand side of the block containing $|\sigma|_{i+1}$. Hence $\beta(|\sigma|,j)\le h_j-1$.

(iii) If $z_j=\U$ or $\L$ then $j<|\sigma|_{i+1}$ and there are at most $\alpha(|\sigma|,j-1)-1$ blocks on the right-hand side of the block containing $j$. Hence $\beta(|\sigma|,j)\le h_j$.
\end{proof}

Comparing the weight function of the paths in $\T^{*}_n$ in Proposition \ref{pro:T*n-weighting} and the properties of $\Lambda_1(\sigma)$ in  Lemma \ref{lem:heights}, it follows that the path $\Lambda_1(\sigma)$ constructed by Algorithm C is a member of $\T^{*}_n$.

\smallskip
Next, we shall construct the map $\Lambda_1^{-1}:\T^{*}_n\rightarrow\SS^{0}_n$ by the following procedure.

\smallskip
\noindent
{\bf Algorithm D.}

Given a path $\mu=z_1z_2\cdots z_n\in\T^{*}_n$, we associate $\mu$ with a snake $\sigma'=\Lambda_1^{-1}(\mu)$.
For $1\le j\le n$, let $\cs(|\sigma'|,j)$ ($d_j$, respectively) be the power of $t$ ($q$, respectively) of $\rho(z_j)$, and let $h_j$ be the height of  $z_j$. To find $|\sigma'|$,
 we construct a sequence $\omega_0, \omega_1, \dots, \omega_n=|\sigma'|$ of words, where $\omega_j$ is the subword consisting of the blocks of $|\sigma'|$ restricted to $\{0,1,\dots,j\}$. The initial word $\omega_0$ is a singleton $(\sigma_0)$. For $j\ge 1$, the
word $\omega_j$ is constructed from $\omega_{j-1}$ and $\rho(z_j)$ according to the following cases:
\begin{enumerate}
\item $z_j=\U$. There are two cases. If $\cs(|\sigma'|,j)=0$,  let $\ell=d_j$. Otherwise $\cs(|\sigma'|,j)=2$ and let $\ell=d_j-2h_j-1$. Then the word $\omega_j$ is obtained from $\omega_{j-1}$ by inserting $j$ between the $\ell$th and the $(\ell+1)$st block from right as a new block.
\item $z_j=\L$ or $\W$. Then let $\ell=d_j-h_j$. The word $\omega_j$ is obtained from $\omega_{j-1}$ by appending $j$ to the right end (left end, respectively) of the $(\ell+1)$st block from right as a new member of the block if $z_j=\L$ ($\W$, respectively).
\item $z_j=\D$. Then let $\ell=d_j$. The word $\omega_j$ is obtained from $\omega_{j-1}$ by inserting $j$ between the $(\ell+1)$st and the $(\ell+2)$nd block from right and getting the two blocks combined.
\end{enumerate}
Then following the rules (R1)-(R3), we determine the signs of the elements of $\omega_n$ by the sequence $\cs(|\sigma'|,j)$ for $j=1,2,\dots, n$. Hence the requested snake $\sigma'$ is established.

\smallskip

In the following we give an interpretation of the sequences $\alpha$, $\beta$ in terms of three-term patterns of the permutation $|\sigma|$.

\begin{defi} {\rm
Let $\pi=\pi_1\pi_2\cdots\pi_n\in\mathfrak{S}^{0}_n$ or $\mathfrak{S}^{00}_n$. For $1\le i\le n$, we define
\begin{align*}
\acb(\pi,i) &= \#\{j: 0\le j<i-1 \mbox{ and }\pi_j<\pi_i<\pi_{j+1}\}, \\
\bca(\pi,i) &= \#\{j: i< j\le n \mbox{ and }\pi_j>\pi_i>\pi_{j+1}\}.
\end{align*}
Let also $\bca(\pi)=\sum_{i=1}^{n} \bca(\pi,i)$. For any snake $\sigma=\sigma_1\sigma_2\cdots\sigma_n\in\SS^{0}_n$ or $\SS^{00}_n$, we distinguish the following classes $X$, $Y$ and $Z$ of elements of $\sigma$:  (i) the valleys with sign changes,   (ii) the double ascents or double descents and (iii) the peaks, namely
\begin{align*}
X(\sigma) &=\{|\sigma|_i : |\sigma|_{i-1}>|\sigma|_i<|\sigma|_{i+1},  \sigma_{i-1}\sigma_i<0 \mbox{ and } \sigma_i\sigma_{i+1}<0\}, \\
Y(\sigma) &=\{|\sigma|_i : |\sigma|_{i-1}<|\sigma|_i<|\sigma|_{i+1} \mbox{ or } |\sigma|_{i-1}>|\sigma|_i>|\sigma|_{i+1}\}, \\
Z(\sigma) &=\{|\sigma|_i : |\sigma|_{i-1}<|\sigma|_i>|\sigma|_{i+1}\}.
\end{align*}
}
\end{defi}

The parameters $\alpha(|\sigma|,k)$ and $\beta(|\sigma|,k)$ of the snake $\sigma\in\SS^{0}_n$ or $\SS^{00}_{n}$ have the following properties.

\begin{lem} \label{lem:pattern} For $0\le k\le n$, we have
\begin{enumerate}
\item $\beta(|\sigma|,k)=\bca(|\sigma|,k)$,
\item $\alpha(|\sigma|,k)=\acb(|\sigma|,k)+\bca(|\sigma|,k)+1$.
\end{enumerate}
\end{lem}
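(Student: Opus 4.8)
The goal is to prove Lemma \ref{lem:pattern}, which identifies the block-counting parameters $\alpha(|\sigma|,k)$ and $\beta(|\sigma|,k)$ with pattern counts $\acb$ and $\bca$ of the permutation $|\sigma|$. Since both statements concern $|\sigma|$ and not the signs, I would work entirely with a permutation $\pi=|\sigma|\in\mathfrak{S}^{0}_n$ (or $\mathfrak{S}^{00}_n$), using the conventions $\pi_0=0$ together with $\pi_{n+1}=n+1$ or $\pi_{n+1}=0$. The plan is to reduce both claims to the combinatorics of blocks of $\pi$ restricted to $\{0,1,\dots,k\}$, and to match the endpoints of these blocks with the three-term patterns counted by $\acb$ and $\bca$.

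\emph{Step 1 (part (i)).} By definition $\beta(\pi,k)$ counts the blocks of $\pi$ restricted to $\{0,1,\dots,k\}$ lying strictly to the right of the block containing $k$. The first observation is that each such block has a rightmost element, say $\pi_j\le k$, whose right neighbor $\pi_{j+1}$ exceeds $k$ (otherwise the block would extend further); and the key point is that the block lies to the right of $k$ precisely when $\pi_j>k>\pi_{j+1}$ fails in one direction and $\pi_j\le k<\pi_{j+1}$ holds while being positioned after $k$. The cleaner route is to fix the entry $\pi_i=k$ and show directly that $\beta(\pi,k)=\#\{j:i<j\le n,\ \pi_j>k>\pi_{j+1}\}$, i.e. that the blocks strictly to the right of $k$ are in bijection with the descents $\pi_j>k>\pi_{j+1}$ occurring after position $i$. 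Indeed, scanning positions to the right of $i$, a new block (among those counted) begins exactly at a position where an entry $>k$ drops to an entry $\le k$; this is the descent pattern $\pi_j>\pi_i>\pi_{j+1}$ with $\pi_i=k$ that defines $\bca(\pi,k)$. So part (i) follows once I carefully verify the boundary behavior at $j=n$ using the convention on $\pi_{n+1}$.

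\emph{Step 2 (part (ii)).} For the total number of blocks, I would count left endpoints of blocks restricted to $\{0,1,\dots,k\}$. A left endpoint occurs at a position $j+1$ where $\pi_{j+1}\le k<\pi_j$, which is an ascent-type pattern $\pi_j>\pi_{j+1}$ straddling $k$ from above; these split into those to the left of $k$ (contributing to $\acb(\pi,k)$ via $\pi_j<\pi_i<\pi_{j+1}$ after relabeling) and those to the right (already counted by $\bca(\pi,k)$ in Step 1). Adding the block containing $0$ (guaranteed by the conventions $\pi_0=0$) accounts for the $+1$. Thus $\alpha(\pi,k)=\acb(\pi,k)+\bca(\pi,k)+1$, where the $\acb$ term records the blocks lying to the left of the block of $k$ and the $\bca$ term records those to the right. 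I would double-check the two conventions ($\mathfrak{S}^{0}_n$ versus $\mathfrak{S}^{00}_n$) separately, since the stated base case $\alpha(\pi,0)\in\{1,2\}$ depends on whether $\pi_{n+1}=n+1$ or $\pi_{n+1}=0$ creates an extra singleton block at the right end.

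\emph{Main obstacle.} The routine part is the pattern bookkeeping; the subtle part is handling the boundary entries $\pi_0$ and $\pi_{n+1}$ correctly and uniformly. The patterns $\acb$ and $\bca$ are defined with index ranges $0\le j<i-1$ and $i<j\le n$, so they implicitly reference $\pi_0$ and $\pi_{n+1}$, and the two snake conventions give different values of $\pi_{n+1}$. I expect the main care to go into verifying that the left-endpoint count for $\alpha$ matches $\acb$ exactly under both conventions, and that no off-by-one error arises at $k=0$ or at the rightmost block. An induction on $k$ (or an equivalent direct scan over descents/ascents straddling the value $k$) should make these boundary checks transparent, and the consistency with Table \ref{tab:block-vector} in Example \ref{exa:alpha-beta-0} gives a concrete sanity check.
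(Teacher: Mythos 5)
Your proposal is correct and follows essentially the same route as the paper: both identify $\beta(|\sigma|,k)$ with the $\bca$-patterns marking the left boundaries of the blocks to the right of $k$'s block, and obtain $\alpha$ by additionally matching the blocks to the left of $k$'s block with $\acb$-patterns at their right boundaries, plus one residual block. The only (harmless) difference is bookkeeping: you count left endpoints of all blocks and attribute the $+1$ to the block containing $0$, whereas the paper attributes it to the block containing $k$ and matches left-blocks directly via their right boundaries, which avoids the ``relabeling'' step you invoke.
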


\begin{proof} Suppose there are $\ell$ ($\ell'$, respectively) blocks on the right-hand (left-hand, respectively) side of the block containing $k$ when $|\sigma|$ is restricted to $\{0,1,\dots,k\}$. Then along with the element $k$, the two adjacent entries of $|\sigma|$ at the left (right, respectively) boundary of each block constitute a $\bca$-pattern ($\acb$-pattern, respectively). Hence $\bca(|\sigma|,k)=\ell$ and $\acb(|\sigma|,k)=\ell'$. The assertions follow.
\end{proof}

For example, let $\pi=((0),5,2,4,7,1,8,10,9,6,3,(11))\in\mathfrak{S}^{0}_{10}$. As shown in Example \ref{exa:alpha-beta-0}, $\alpha(\pi,6)=3$ and $\beta(\pi,6)=0$.
We have $\acb(\pi,6)=2$, where the two requested $\acb$-patterns are $(4,7,6)$ and $(1,8,6)$.

Following the weighting scheme given in Algorithm C,
we define the statistic $\pat_Q$ of a snake $\sigma\in\SS^{0}_n$ by
\begin{align} \label{eqn:patQ}
\begin{split}
\pat_Q(\sigma) &=\sum_{j\in X(\sigma)} 2\big(\acb(|\sigma|,j)+\bca(|\sigma|,j)\big)-\#X(\sigma) \\
               &\qquad\qquad +\sum_{j\in Y(\sigma)} \big( \acb(|\sigma|,j)+\bca(|\sigma|,j)\big).
\end{split}
\end{align}
By Lemmas \ref{lem:heights} and \ref{lem:pattern} and Proposition \ref{pro:T*n-weighting},
we have the following result.

\smallskip
\begin{thm} \label{thm:T*->snake-0} The map $\Lambda_1$ established by Algorithm C is a bijection between $\SS^{0}_n$ and $\T^{*}_n$ such that
\[
\sum_{\sigma\in \SS_n^{0}} t^{\cs(\sigma)}q^{\bca(|\sigma|)+\pat_Q(\sigma)}=Q_n(t,q).
\]
\end{thm}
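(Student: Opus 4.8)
The plan is to establish that $\Lambda_1$ is a weight-preserving bijection between $\SS^0_n$ and $\T^*_n$, and then to identify the total weight with $Q_n(t,q)$ via Proposition \ref{pro:T*n-weighting}. The well-definedness of $\Lambda_1$, i.e. that $\Lambda_1(\sigma)\in\T^*_n$, has already been settled by matching the height information of Lemma \ref{lem:heights} against the weight constraints defining $\T^*_n$; so the first substantive task is to prove bijectivity by verifying that Algorithm D inverts Algorithm C.

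For the inverse direction I would read the path $\mu=z_1\cdots z_n$ from left to right and reconstruct $|\sigma|$ through the growing block words $\omega_0,\omega_1,\dots,\omega_n$. The crux is to show that the gap index $\ell$ computed in Algorithm D from the $q$-power $d_j$ and the height $h_j$ equals $\bca(|\sigma|,j)=\beta(|\sigma|,j)$ in every one of the four step types. Using Lemma \ref{lem:heights}(i) to express $\alpha(|\sigma|,j)$ in terms of $h_j$ (namely $\alpha(|\sigma|,j)=h_j+2$ when $z_j=\U$, $\alpha(|\sigma|,j)=h_j+1$ when $z_j=\L,\W$, and $\alpha(|\sigma|,j)=h_j$ when $z_j=\D$), one checks case by case that $d_j-2h_j-1$, $d_j-h_j$, or $d_j$ collapses precisely to $\beta(|\sigma|,j)$. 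This confirms that the creation of a new block ($\U$), the one-sided extension of a block ($\L$ on the right, $\W$ on the left), and the merging of two adjacent blocks ($\D$) are each performed in the correct gap counted from the right, so that $\omega_n=|\sigma|$; the signs are then recovered uniquely from $|\sigma|$ and the $t$-powers $\cs(|\sigma|,j)$ by Lemma \ref{lem:sign-changes}. Running the argument the other way, the weight ranges of $\T^*_n$ in Proposition \ref{pro:T*n-weighting} translate via Lemma \ref{lem:heights}(ii),(iii) into exactly the admissible values of $\ell$, so every path yields a legitimate snake; hence $\Lambda_1^{-1}\circ\Lambda_1$ and $\Lambda_1\circ\Lambda_1^{-1}$ are both the identity.

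It then remains to compute $\rho(\Lambda_1(\sigma))$. The power of $t$ contributed by $z_j$ is $0,2,1,1,0$ according as $j$ is a valley without sign change, a valley in $X(\sigma)$, a double ascent, a double descent, or a peak, which is exactly $\cs(\sigma,j)$; summing and applying Lemma \ref{lem:sign-changes} gives the $t$-exponent $\cs(\sigma)$. For the $q$-exponent I would write $\beta=\beta(|\sigma|,j)$ and $\alpha=\alpha(|\sigma|,j)$ and substitute $\beta(|\sigma|,j)=\bca(|\sigma|,j)$ and $\alpha(|\sigma|,j)=\acb(|\sigma|,j)+\bca(|\sigma|,j)+1$ from Lemma \ref{lem:pattern} into each weight, so that the valley-$X$ exponent $\beta+2\alpha-3$ becomes $2\acb(|\sigma|,j)+3\bca(|\sigma|,j)-1$, the double-ascent/descent exponent $\beta+\alpha-1$ becomes $\acb(|\sigma|,j)+2\bca(|\sigma|,j)$, and the valley/peak exponent $\beta$ stays $\bca(|\sigma|,j)$. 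Collecting these over all $j$ and comparing with $\bca(|\sigma|)+\pat_Q(\sigma)$ — where $\bca(|\sigma|)=\sum_j\bca(|\sigma|,j)$ absorbs one copy of $\bca$ at every element and $\pat_Q(\sigma)$ supplies the remaining $\acb$- and $\bca$-terms on $X(\sigma)$ and $Y(\sigma)$ together with the $-\#X(\sigma)$ correction — shows that the two $q$-exponents agree term by term. Thus $\rho(\Lambda_1(\sigma))=t^{\cs(\sigma)}q^{\bca(|\sigma|)+\pat_Q(\sigma)}$, and summing over $\sigma\in\SS^0_n$ while invoking Proposition \ref{pro:T*n-weighting} yields $Q_n(t,q)$.

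I expect the main obstacle to be the bijectivity verification, specifically the bookkeeping that confirms Algorithm D's gap index $\ell$ coincides with $\beta(|\sigma|,j)$ across all four cases and that the admissible weight ranges of $\T^*_n$ match the legal insertion positions dictated by the block structure. Once that block-structure correspondence is pinned down, the sign recovery (Lemma \ref{lem:sign-changes}) and the weight identity (Lemma \ref{lem:pattern}) are essentially routine.
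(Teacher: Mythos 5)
Your proposal is correct and follows essentially the same route as the paper: well-definedness of $\Lambda_1$ via Lemma \ref{lem:heights} against the weight constraints of Proposition \ref{pro:T*n-weighting}, invertibility via Algorithm D with the key verification that the gap index $\ell$ collapses to $\beta(|\sigma|,j)=\bca(|\sigma|,j)$ in each of the four step types, sign recovery via Lemma \ref{lem:sign-changes}, and the exponent identity via Lemma \ref{lem:pattern}. Your case-by-case computations (e.g.\ $d_j-2h_j-1=\beta$ for an $X$-valley and $2\acb+3\bca-1$ for its $q$-exponent) all check out and in fact supply details the paper leaves implicit.
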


\subsection{The enumerator $R_n(t,q)$ of $\SS^{00}_{n+1}$.} We shall establish a map $\Lambda_2:\SS^{00}_{n+1}\rightarrow\T_n$ by the same method as in Algorithm C with a modification on the weighting scheme.

Given a snake $\sigma=\sigma_1\sigma_2\cdots\sigma_{n+1}\in\SS^{00}_{n+1}$, recall that the $\cs$-vector of $\sigma$ and the parameters $\alpha(|\sigma|,k)$, $\beta(|\sigma|,k)$ for $k=1,2,\dots, n$ are computed under the convention $\sigma_0=\sigma_{n+2}=0$.

\begin{exa} \label{exa:alpha-beta-00} {\rm
Let $\sigma=((0),5,-2,4,-7,-1,-8,11,-9,6,3,10,(0))\in\SS^{00}_{11}$. Notice that $\alpha(|\sigma|,6)=4$ and $\beta(|\sigma|,6)=1$ as shown below.
\[
\begin{tabular}{ccccccccccccc}
    (0) & 5 & 2 & 4 & 7 & 1 & 8 & 11 & 9 & 6 & 3 & 10 & (0) \\
    \cline{1-4} \cline{6-6}  \cline{10-11} \cline{13-13} \\
\end{tabular}
\]
For $0\le k\le 10$, the sequences $\alpha(|\sigma|,k)$ and $\beta(|\sigma|,k)$ of $|\sigma|$ are shown in Table \ref{tab:block-vector-00}.
}
\end{exa}

\begin{table}[ht]
\caption{The $\alpha$ and $\beta$ vectors of $|\sigma|$.}
\begin{tabular}{c|cccccccccccc}
    $k$           & 0 & 1 & 2 & 3 & 4 & 5 & 6 & 7 & 8 & 9 & 10 & 11 \\
    \hline
  $\alpha(|\sigma|,k)$ & 2 & 3 & 4 & 5 & 5 & 4 & 4 & 3 & 3 & 3 & 2  &    \\
   $\beta(|\sigma|,k)$ &   & 1 & 2 & 1 & 3 & 3 & 1 & 2 & 2 & 1 & 0  &
\end{tabular}
\label{tab:block-vector-00}
\end{table}

We associate the snake $\sigma$ with a weighted path $\Lambda_2(\sigma)=z_1z_2\cdots z_n$ by the following procedure.

\smallskip
\noindent
{\bf Algorithm C'.}

For $1\le j\le n$, let $j=|\sigma|_i$ for some $i\in [n+1]$ and define the step $z_j$ according to the following cases:
\begin{enumerate}
\item if $|\sigma|_{i-1}>j<|\sigma|_{i+1}$ then $z_j=\U$ with weight
\[
\rho(z_j)=\left\{ \begin{array}{ll}
                   q^{\beta(|\sigma|,j)-1} &\mbox{ if $\sigma_{i-1}\sigma_i>0$ and $\sigma_i\sigma_{i+1}>0$,} \\
                   t^2q^{\beta(|\sigma|,j)+2\alpha(|\sigma|,j)-5} &\mbox{ if $\sigma_{i-1}\sigma_i<0$ and $\sigma_i\sigma_{i+1}<0$,}
                  \end{array}
          \right.
\]
\item if $|\sigma|_{i-1}<j<|\sigma|_{i+1}$ then $z_j=\L$ with weight $tq^{\beta(|\sigma|,j)+\alpha({|\sigma|,j})-2}$,
\item if $|\sigma|_{i-1}>j>|\sigma|_{i+1}$ then $z_j=\W$ with weight $tq^{\beta(|\sigma|,j)+\alpha({|\sigma|,j})-2}$,
\item if $|\sigma|_{i-1}<j>|\sigma|_{i+1}$ then $z_j=\D$ with weight $q^{\beta(|\sigma|,j)}$.
\end{enumerate}

\smallskip
For example, take the snake $\sigma=((0),5,-2,4,-7,-1,-8,11,-9,6,3,10,(0))\in\SS^{00}_{11}$. From the parameters $\alpha(|\sigma|,k)$, $\beta(|\sigma|,k)$ of $\sigma$ given in Example \ref{exa:alpha-beta-00}, the corresponding path $\Lambda_2(\sigma)$ is shown in Figure \ref{fig:snake-path-00}.

\begin{figure}[ht]
\begin{center}
\psfrag{1}[][][0.95]{$1$}
\psfrag{t2q5}[][][0.95]{$t^2q^5$}
\psfrag{tq6}[][][0.95]{$tq^6$}
\psfrag{q3}[][][0.95]{$q^3$}
\psfrag{tq3}[][][0.95]{$tq^3$}
\psfrag{q2}[][][0.95]{$q^2$}
\psfrag{tq2}[][][0.95]{$tq^2$}
\includegraphics[width=3.2in]{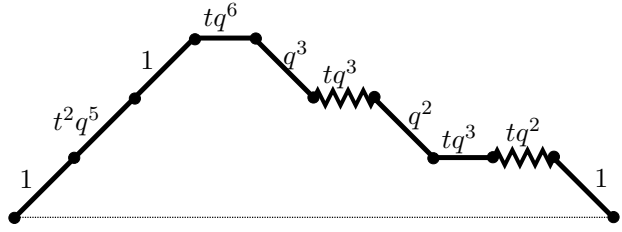}
\end{center}
\caption{\small The corresponding path of the snake $\sigma\in\SS^{00}_{11}$.}
\label{fig:snake-path-00}
\end{figure}

\begin{lem} \label{lem:heights-00} For a snake $\sigma=\sigma_1\sigma_2\cdots\sigma_{n+1}\in\SS^{00}_{n+1}$, let $\Lambda_2(\sigma)=z_1z_2\cdots z_n$ be the path constructed by Algorithm C'. For $1\le j\le n$, let $h_j$ be the height of the step $z_j$. Then the following properties hold.
\begin{enumerate}
\item $h_j=\alpha(|\sigma|,j-1)-2$.
\item If $z_j=\D$ then $h_j\ge 1$.
\item If $z_j=\W$ or $\D$ then $0\le \beta(|\sigma|,j)\le h_j$.
\item If $z_j=\U$ or $\L$ then $1\le \beta(|\sigma|,j)\le h_j+1$.
\end{enumerate}
\end{lem}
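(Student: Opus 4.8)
The plan is to run the same inductive bookkeeping as in the proof of Lemma~\ref{lem:heights}, the only structural change being the boundary convention: for $\sigma\in\SS^{00}_{n+1}$ we have $\sigma_0=\sigma_{n+2}=0$, so there are \emph{two} terminal blocks and $\alpha(|\sigma|,0)=2$ instead of $1$. First I would prove (i) by induction on $j$, with base case $h_1=0=\alpha(|\sigma|,0)-2$. For the inductive step I classify $j=|\sigma|_i$ exactly as in Algorithm~C$'$: a valley ($z_j=\U$) inserts $j$ as a new singleton block, raising both the height and $\alpha$ by one; a double ascent or descent ($z_j=\L$ or $\W$) appends $j$ to an existing block, leaving height and $\alpha$ fixed; and a peak ($z_j=\D$) merges two blocks, lowering both by one. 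In every case the identity $h_{j+1}=\alpha(|\sigma|,j)-2$ is preserved, which gives (i).

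For (iii) and (iv) I would rewrite the desired bounds, via (i), as statements about the number of blocks of $|\sigma|$ restricted to $\{0,\dots,j\}$ that lie to the right of the block containing $j$, i.e.\ about $\beta(|\sigma|,j)$. Two observations drive the estimates. First, since $j$ occupies or extends a block at position $i\ge 1$, the leading $0$-block always lies strictly to its left and is therefore uncounted by $\beta$; combined with the block-count identities above this yields $\beta(|\sigma|,j)\le\alpha(|\sigma|,j-1)-2=h_j$ when $z_j=\W$ or $\D$, and $\beta(|\sigma|,j)\le\alpha(|\sigma|,j-1)-1=h_j+1$ when $z_j=\U$ or $\L$. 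Second, when $z_j=\U$ or $\L$ the position $i+1$ still carries a value exceeding $j$ and is hence empty in the restriction, so the trailing $0$-block at position $n+2$ lies strictly to the right of $j$'s block; this forces $\beta(|\sigma|,j)\ge 1$, the genuinely new lower bound absent in the $\SS^{0}_n$ setting of Lemma~\ref{lem:heights}, where no trailing block exists. Note that for $z_j=\W$ the height $h_j$ may well equal $0$ (the block receiving $j$ can already reach position $n+2$), which is exactly why (ii) is asserted only for peaks.

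For (ii) I would show that a peak forces $\alpha(|\sigma|,j-1)\ge 3$, whence $h_j\ge 1$. A peak $j$ connects the block ending at position $i-1$ to the distinct block starting at position $i+1$, accounting for at least two blocks. The key point is that these two connected blocks cannot be exactly the leading and trailing $0$-blocks: that would require every position other than $i$ to already be filled, i.e.\ $j=n+1$, contradicting $j\le n$. Thus at least one terminal $0$-block survives outside the connected pair, supplying a third block and giving $\alpha(|\sigma|,j-1)\ge 3$.

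I expect this last step to be the only real obstacle, since the remaining parts run parallel to Lemma~\ref{lem:heights}; the claim $h_j\ge 1$ for peaks rests precisely on the pigeonhole remark that a single peak cannot absorb both terminal $0$-blocks while $j\le n$. The running example $\sigma\in\SS^{00}_{11}$ illustrates tightness: the peak $j=10$ connects the block of $3$ to the trailing $0$-block, with the leading $0$-block serving as the required third block, so that $h_{10}=1$. These height and range constraints are exactly what is needed to match the weighting scheme of $\T_n$ in Proposition~\ref{pro:Tn-weighting}.
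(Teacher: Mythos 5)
Your proposal is correct and follows essentially the same route as the paper: induction on $j$ for the height identity (i), the trailing $0$-block forcing $\beta(|\sigma|,j)\ge 1$ for up and straight level steps, and the observation that a peak $j\le n$ cannot merge both terminal $0$-blocks (the paper phrases this as ``$n+1$ is always absent, so $\alpha(|\sigma|,j)\ge 2$''), giving $h_j\ge 1$ in (ii). The only cosmetic caveat is that for $z_j=\D$ or $\L$ the leading $0$-block may be absorbed into the block containing $j$ rather than lying strictly to its left, but the stated upper bounds on $\beta(|\sigma|,j)$ hold in that case as well, so nothing is lost.
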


\begin{proof}
By the initial condition $\alpha(|\sigma|,0)=2$, we have $h_1=0$. The first step is $\U$, $\L$ or $\W$. Note that $z_1=\L$ ($\W$, respectively) if $|\sigma|_1=1$ ($|\sigma|_{n+1}=1$, respectively) and $z_1=\U$ if $|\sigma|_i=1$ for some $i\in\{2,\dots,n\}$. For $j\ge 1$, let $j=|\sigma|_i$ for some $i\in [n+1]$.
The assertion (i) can be proved by the same argument as in the proof of (i) of Lemma \ref{lem:heights}.

(ii) Note that $\alpha(|\sigma|,j)\ge 2$ since the greatest element $n+1$ is always absent. If $z_j=\D$ then $\alpha(|\sigma|,j-1)\ge 3$ and hence $h_j\ge 1$.

(iii) If $z_j=\W$ or $\D$ then $j>|\sigma|_{i+1}$ and the element $j$ is added to the block with $|\sigma|_{i+1}$. Then there are at most $\alpha(|\sigma|,j-1)-2$ blocks on the right-hand side of the block containing $|\sigma|_{i+1}$. Hence $0\le\beta(|\sigma|,j)\le h_j$.

(iv) If $z_j=\U$ or $\L$ then $j<|\sigma|_{i+1}$ and there are at least one block and at most $\alpha(|\sigma|,j-1)-1$ blocks on the right-hand side of the element $j$. Hence $1\le\beta(|\sigma|,j)\le h_j+1$.
\end{proof}

\smallskip
Comparing the weight function of the paths in $\T_n$ in Proposition \ref{pro:Tn-weighting} and the properties of $\Lambda_2(\sigma)$ in  Lemma \ref{lem:heights-00}, it follows that the path $\Lambda_2(\sigma)$ constructed by Algorithm C' is a member of $\T_n$.

\smallskip
To find $\Lambda^{-1}_2$, given a path $\mu=z_1z_2\cdots z_n\in\T_n$, we shall construct the corresponding snake $\sigma'=\Lambda_2^{-1}(\mu)$ by the following procedure.

\smallskip
\noindent
{\bf Algorithm D'.}

For $1\le j\le n$, let $\cs(|\sigma'|,j)$ ($d_j$, respectively) be the power of $t$ ($q$, respectively) of $\rho(z_j)$ and let $h_j$ be the height of $z_j$. To find $|\sigma'|$,
 we construct a sequence $\omega_0,\omega_1,\dots,\omega_n$ of words, where $\omega_j$ is the subword consisting of the blocks of $|\sigma'|$ restricted to $\{0,1,\dots,j\}$. The last word $\omega_n$ contains exactly two blocks, and the requested permutation $|\sigma'|$ is obtained from $\omega_n$ by inserting the element $n+1$ between the two blocks.

The initial word $\omega_0$ consists of the two blocks $(\sigma_0)$ and $(\sigma_{n+1})$. For $j\ge 1$, the
word $\omega_j$ is constructed from $\omega_{j-1}$ and $\rho(z_j)$ according to the following cases:
\begin{enumerate}
\item $z_j=\U$. There are two cases. If $\cs(|\sigma'|,j)=0$,  let $\ell=d_j+1$. Otherwise $\cs(|\sigma'|,j)=2$ and let $\ell=d_j-2h_j-1$. Then the word $\omega_j$ is obtained from $\omega_{j-1}$ by inserting $j$  between the $\ell$th and the $(\ell+1)$st block from right as a new block.
\item $z_j=\L$ or $\W$. Then let $\ell=d_j-h_j$. The word $\omega_j$ is obtained from $\omega_{j-1}$ by appending $j$ to the right end (left end, respectively) of the $(\ell+1)$st block from right as a new member of the block if $z_j=\L$ ($\W$, respectively).

\item $z_j=\D$. Then let $\ell=d_j$. The word $\omega_j$ is obtained from $\omega_{j-1}$ by inserting $j$ to $\omega_{j-1}$ between the $(\ell+1)$st and the $(\ell+2)$nd block from right and getting the two blocks combined.
\end{enumerate}
Then the signs of the elements of the requested snake $\sigma'$ can be determined by $|\sigma'|$ and the sequence $\cs(|\sigma'|,j)$ for $j=1,2,\dots,n$.

\smallskip
Following the weighting scheme given in Algorithm C', we define the statistic $\pat_R$ of a snake $\sigma\in\SS^{00}_{n+1}$ by
\begin{align} \label{eqn:patR}
\begin{split}
\pat_R(\sigma) &=\sum_{j\in X(\sigma)} 2\big(\acb(|\sigma|,j)+\bca(|\sigma|,j)-1\big) \\
               &\qquad\qquad +\sum_{j\in Y(\sigma)} \big(\acb(|\sigma|,j)+\bca(|\sigma|,j)\big)+\#Z(\sigma).
\end{split}
\end{align}
Notice that $\sum_{j=1}^{n} (\beta(|\sigma|,j)-1)=\bca(|\sigma|)-n$ and that $Z(\sigma)$ contains the element $n+1$, which is not involved in step (iv) of Algorithm C'.
By Lemmas \ref{lem:heights-00} and \ref{lem:pattern} and Proposition \ref{pro:Tn-weighting},
we have the following result.

\smallskip
\begin{thm} \label{thm:T->snakes-00} The map $\Lambda_2$ established by Algorithm C' is a bijection between $\SS^{00}_{n+1}$ and $\T_n$ such that
\[
\sum_{\sigma\in \SS^{00}_{n+1}} t^{\cs(\sigma)}q^{\bca(|\sigma|)+\pat_R(\sigma)-n-1}=R_n(t,q).
\]
\end{thm}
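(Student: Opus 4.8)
The plan is to prove two things in turn: that $\Lambda_2$ is a bijection from $\SS^{00}_{n+1}$ onto $\T_n$, and that for each snake the weight of its image factors as $\rho(\Lambda_2(\sigma))=t^{\cs(\sigma)}q^{\bca(|\sigma|)+\pat_R(\sigma)-n-1}$. Granting both, summing over $\SS^{00}_{n+1}$ and invoking Proposition \ref{pro:Tn-weighting} (which gives $\rho(\T_n)=R_n(t,q)$) yields the stated identity at once.

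For the bijection, the membership $\Lambda_2(\sigma)\in\T_n$ is already supplied by Lemma \ref{lem:heights-00} together with the weight scheme of Proposition \ref{pro:Tn-weighting}, so it remains to check that Algorithm D' inverts Algorithm C'. I would argue by induction on $j=1,\dots,n$ that the word $\omega_j$ built in D' equals the list of blocks of $|\sigma|$ restricted to $\{0,1,\dots,j\}$. The inductive step reduces to verifying, for each of the four step types, that the insertion rank $\ell$ recomputed in D' from $(d_j,h_j,\cs(|\sigma'|,j))$ equals $\beta(|\sigma|,j)$, the very quantity C' stored in the $q$-exponent. Solving the C' weights for $\beta$ by means of $h_j=\alpha(|\sigma|,j-1)-2$ (Lemma \ref{lem:heights-00}(i)) gives $\beta=d_j+1$ at an ascending valley, $\beta=d_j-2h_j-1$ at a valley with sign change, $\beta=d_j-h_j$ at a double ascent or descent, and $\beta=d_j$ at a peak, which are exactly the ranks prescribed in D'. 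Since both algorithms are governed by the same block operations (a valley opens a new block, a double ascent or descent extends one end of a block, a peak fuses two adjacent blocks), the insertion and deletion procedures are mutually inverse; the signs are then restored from $|\sigma'|$ and the $\cs$-vector by Lemma \ref{lem:sign-changes}. This runs exactly parallel to the bijection of Theorem \ref{thm:T*->snake-0}, the only change being the constant shifts in the exponents forced by the convention $\sigma_0=\sigma_{n+2}=0$, which makes $\alpha(|\sigma|,0)=2$.

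For the weight identity I would split $\rho(\Lambda_2(\sigma))=\prod_{j=1}^{n}\rho(z_j)$ into its $t$-part and $q$-part. By construction the power of $t$ in $\rho(z_j)$ is $\cs(\sigma,j)$, so Lemma \ref{lem:sign-changes} gives total $t$-degree $\cs(\sigma)$. For the $q$-part I would rewrite each exponent $d_j$ through Lemma \ref{lem:pattern}, substituting $\beta(|\sigma|,j)=\bca(|\sigma|,j)$ and $\alpha(|\sigma|,j)=\acb(|\sigma|,j)+\bca(|\sigma|,j)+1$. A routine case check then shows the exponent splits uniformly as
\[
d_j=\big(\text{contribution of }j\text{ to }\pat_R(\sigma)\big)+\bca(|\sigma|,j)-1,
\]
the bracketed term being $2(\acb(|\sigma|,j)+\bca(|\sigma|,j)-1)$ for $j\in X(\sigma)$, $\acb(|\sigma|,j)+\bca(|\sigma|,j)$ for $j\in Y(\sigma)$, the constant $1$ for a peak, and $0$ for a valley without sign change, in agreement with Eq.\,(\ref{eqn:patR}).

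Summing over $j=1,\dots,n$ then produces the $q$-degree $\bca(|\sigma|)+\pat_R(\sigma)-n-1$, and this bookkeeping is the step I expect to be the main obstacle, since it hinges on the special role of the largest value $n+1$. That value is always a peak and so contributes $1$ to $\#Z(\sigma)$ inside $\pat_R$, yet it is the unique element to which Algorithm C' assigns no step; hence the peak contributions actually summed over $j\le n$ total $\#Z(\sigma)-1$, giving $\sum_{j=1}^{n}(\text{contribution to }\pat_R)=\pat_R(\sigma)-1$. Combined with $\bca(|\sigma|,n+1)=0$, whence $\sum_{j=1}^{n}\bca(|\sigma|,j)=\bca(|\sigma|)$, and with the $-1$ contributed by each of the $n$ steps, this gives $\sum_{j=1}^{n}d_j=\bca(|\sigma|)+\pat_R(\sigma)-n-1$. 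Establishing $\rho(\Lambda_2(\sigma))=t^{\cs(\sigma)}q^{\bca(|\sigma|)+\pat_R(\sigma)-n-1}$ and summing as above then finishes the proof.
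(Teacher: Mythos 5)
Your proposal is correct and follows essentially the same route as the paper: membership of $\Lambda_2(\sigma)$ in $\T_n$ via Lemma \ref{lem:heights-00}, invertibility via Algorithm D' recovering $\beta(|\sigma|,j)$ from $(d_j,h_j,\cs(|\sigma'|,j))$, and the exponent bookkeeping via Lemma \ref{lem:pattern} together with the two observations that $\sum_{j=1}^{n}(\beta(|\sigma|,j)-1)=\bca(|\sigma|)-n$ and that the peak $n+1$ lies in $Z(\sigma)$ but receives no step. Your case-by-case verification of the ranks $\ell$ and of the identity $d_j=(\text{contribution to }\pat_R)+\bca(|\sigma|,j)-1$ correctly fills in the details the paper leaves implicit.
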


\section{Concluding remarks}

The first part of this work gives signed countings of types B and D permutations and derangements with the signs controlled by one half of the statistic $\fwex$. The type B case happens to be the Springer numbers of type B. However, the type D case is \emph{not} the Springer numbers of type D. Hence a natural question is to find a statistic on even-signed permutations that  results in the Springer numbers of type D. The $\gamma$-nonnegativity result in \cite[Theorem 13.9]{Petersen} sheds some light on the question.

The second part of this work gives an interpretation of the $t,q$-Euler numbers $Q_n(t,q)$ and $R_n(t,q)$. However the statistics $\pat_Q$ and $\pat_R$ for the variate $q$ are complicated. We would like to know if there are more `natural' interpretations. These questions are left to the interested readers.

\section*{Acknowledgements.}
The authors thank the referees for reading the manuscript carefully and providing helpful suggestions.
This research is partially supported by MOST grants 104-2115-M-003-014-MY3 (S.-P. Eu), 105-2115-M-153-002-MY2 (T.-S. Fu) and MOST postdoctoral fellowship 106-2811-M-003-011 (H.-C. Hsu).

\end{document}